\def\newversion{1} 
\def\diffcolor{black} 

\newcommand{\towriteornottowrite}[2]{\ifthenelse{\equal{\newversion}{2}}{{\color{Green}#1\color{red}#2}}{\ifthenelse{\equal{\newversion}{1}}{{\color{\diffcolor}#1}}{{\color{\diffcolor}#2}}}}

\documentclass{article}
\usepackage[utf8]{inputenc}
\usepackage[T1]{fontenc}

\usepackage{rotating}
\usepackage{amssymb}
\usepackage{amsthm}
\usepackage{amsmath}
\usepackage{tabularx}
\usepackage{verbatim}
\usepackage[linesnumbered, ruled, boxed, vlined]{algorithm2e}
\usepackage{enumitem}
\usepackage{url}
\usepackage{float}
\usepackage{mathtools}
\usepackage{array}
\usepackage{hyperref}

\newcommand{\layout}{\textsc{layout}}
\newcommand{\labels}{\textsc{labels}}

\usepackage[dvipsnames]{xcolor}
\definecolor{mygreen}{rgb}{0.01, 0.75, 0.24}
\usepackage{tikz}
\def\mycolor{black}

\newcommand{\newdot}[2]{
 \node[shape=circle, minimum size = 5pt, inner sep=0pt, outer sep=0pt, fill=\mycolor, color=\mycolor] (#1) at (#2){};
}

\usepackage[a4paper, margin=2.5cm,top=3cm, bottom=3cm]{geometry}

\setlist[enumerate,1]{label={\textnormal{(\roman*)}}}

\theoremstyle{plain}
\newtheorem{theorem}{Theorem}[section]
\newtheorem{lemma}[theorem]{Lemma}
\newtheorem{proposition}[theorem]{Proposition}

\newtheorem{conjecture}[theorem]{Conjecture}
\newtheorem{observation}[theorem]{Observation}
\newtheorem{question}[theorem]{Question}

\theoremstyle{definition}
\newtheorem{definition}[theorem]{Definition}

\newcommand{\oldqed}{}
\def\endofClaim{\hfill\scalebox{.6}{$\Box$}}

\def\textcol{black}
\newcommand{\textbe}[2]{
\node[label=below:{\textcolor{\textcol}{#2}}] at (#1) {};
}
\newcommand{\textab}[2]{
\node[label=above:{\textcolor{\textcol}{#2}}] at (#1) {};
}
\newcommand{\textat}[2]{
\node at (#1) {\textcolor{\textcol}{#2}};
}
\usepackage{savesym}
\savesymbol{textle}
\newcommand{\textle}[2]{
\node[label=left:{\textcolor{\textcol}{#2}}] at (#1) {};
}
\newcommand{\textri}[2]{
\node[label=right:{\textcolor{\textcol}{#2}}] at (#1) {};
}
\renewcommand{\hat}[1]{\widehat{#1}}
\newcommand{\upper}[1]{\overset{\textcolor{blue}{\hspace{.1cm}#1\hspace{.1cm}}}{~}}
\newcommand{\domath}[2]{\pgfmathsetmacro{#1}{#2}}
\newcommand{\floor}[1]{\left\lfloor#1\right\rfloor}

\newcommand\eps{\varepsilon}

\newcommand{\coleq}{\coloneqq}

\newcommand\cF{\mathcal{F}}

\newcommand{\F}{\mathbb F}
\newcommand{\Z}{\mathbb Z}
\newcommand{\C}{\mathcal C}
\newcommand{\spa}[1]{\left\langle#1\right\rangle}
\newcommand{\subs}{\subseteq}
\newcommand{\abs}[1]{\left\lvert#1\right\rvert}

\DeclareMathOperator{\ord}{ord}

\author{Olaf Parczyk\thanks{Fachbereich Mathematik und Informatik, Freie Universität Berlin, Arnimallee 3, 14195 Berlin, Germany.\\ \textit{E-mails:} \texttt{parczyk|szabo@mi.fu-berlin.de, s.rathke@fu-berlin.de}.}, Silas Rathke\footnotemark[1], and Tibor Szabó\footnotemark[1] }

\date{}

\title{The maximum diameter of $2$-dimensional simplicial complexes}

\begin{document}

\maketitle

\begin{abstract}
    We study a problem of Santos about the largest possible diameter of a $d$-dimensional (abstract) simplicial complex on $n$ vertices.
    For dimension $2$, we determine the exact value of the maximum for every $n$ using an explicit construction.
    We also come across a tantalizing open problem about the packing of squares of Hamilton cycles in the complete graph and obtain an infinite sequence of tight explicit constructions.
    \end{abstract}

\section{Introduction}

A \emph{simplicial complex} on $n$ vertices is a family $\cF$ of subsets of $[n]$, which is closed under taking subsets.
If the maximum size of a set in $\cF$ is $d+1$, then it is called a \emph{simplicial $d$-complex}.
The \emph{dual graph} of a simplicial $d$-complex $\cF$ has as vertices all sets in $\cF$ of size $d+1$ and two vertices are adjacent if their intersection has size $d$.
The \emph{diameter} of a simplicial complex $\cF$ is the diameter of its dual graph.

Let $H_s(n,d)$ be the maximum diameter of a simplicial $d$-complex on $n$ vertices with a connected dual graph. This notion was introduced by Santos~\cite{santosderzweite} while studying possible generalizations of the polynomial Hirsch conjecture. He proved the bounds 
\begin{equation}\label{eq:santosbounds} \Omega\!\left(n^{\frac{2d+2}{3}}\right)\le H_s(n,d) \le \frac{1}{d}\binom nd.
\end{equation} 
Subsequently, Criado and Santos~\cite{criado2017maximum} gave an explicit construction of simplicial $d$-complexes whose diameter matched the order of magnitude of the upper bound for every fixed $d$ and an infinite sequence of $n$.
Criado and Newman~\cite{criado2021randomized} used the Lovász Local Lemma to establish the existence of a construction for any fixed $d\ge 3$ and all $n$, that reduced the gap between the bounds from a factor exponential in $d$ to $\mathcal O(d^2)$.
Most recently Bohman and Newman~\cite{bohman2022complexes} managed to pin down the precise asymptotics using the differential equations method to track the evolution of a random greedy algorithm: \begin{equation}\label{eq:bohmanbound}  
\left( \frac{1}{d} - (\log n)^{-\varepsilon} \right) \binom{n}{d} \le H_s(n,d) \, ,
\end{equation}
where $\eps < 1/d^2$ and $n$ is sufficiently large.

The upper bound in~(\ref{eq:santosbounds}) is in fact quickly justified. On a shortest path \towriteornottowrite{$P$}{} between any two vertices in the dual graph, each vertex (corresponding to a $(d+1)$-set) contains $d$ sets of size $d$ that are not contained in any previously visited vertex as this would create a shortcut. Hence, if $\ell$ is the number of vertices of \towriteornottowrite{$P$}{the path}, then there must be at least $d\cdot \ell$ sets of size $d$. Thus, $\ell\le\frac{1}{d}\binom{n}{d}$ and the longest diameter can be at most $\frac{1}{d}\binom{n}{d}-1$. Since the first vertex of $P$ actually covers $d+1$ sets of size $d$, we get the slightly improved upper bound
\begin{equation}\label{eq:upper bound}
H_s(n,d)\le \floor{\frac{1}{d}\binom{n}{d}-\frac{d+1}{d}}.
\end{equation}

The main result of this paper is a matching lower bound for $d=2$ for every $n$ not equal to 6.

\begin{theorem}
    For $n\ge 3$, it holds that 
    \label{thm:main, 4k+1}
    \[H_s(n,2) = \begin{cases}
        \floor{\frac{1}{2}\binom{n}{2}-\frac{3}{2}} & n\ne 6\\
        5=\floor{\frac{1}{2}\binom{6}{2}-\frac{3}{2}}-1 & n=6.
    \end{cases}\]
\end{theorem}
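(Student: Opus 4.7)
The upper bound (\ref{eq:upper bound}) is already in the text, so the content of the theorem is a matching lower bound for every $n \ne 6$ together with a tight analysis of the case $n = 6$. I would construct, for each $n \ne 6$, an explicit sequence of triangles $T_1, \ldots, T_\ell$ in $K_n$ such that consecutive triangles share an edge, no two non-consecutive triangles share an edge, and $\ell = \lfloor(\binom{n}{2} - 1)/2\rfloor$. The simplicial complex obtained as the downward closure of $\{T_1, \ldots, T_\ell\}$ then has dual graph equal to a path of length $\ell - 1 = \lfloor \binom{n}{2}/2 - 3/2 \rfloor$, matching the upper bound exactly. The $2\ell + 1$ edges used either exhaust $E(K_n)$ (when $\binom{n}{2}$ is odd, i.e.\ $n \equiv 2, 3 \pmod 4$) or leave a single edge unused (when $\binom{n}{2}$ is even).

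The natural building block is the squared Hamilton path: on an $m$-vertex path $w_1 w_2 \cdots w_m$, the triangles $T_i = \{w_i, w_{i+1}, w_{i+2}\}$ for $i = 1, \ldots, m - 2$ form a dual-graph path of length $m - 3$ on $2m - 3$ edges, and any two non-consecutive triangles share at most one vertex, so the no-shortcut condition is automatic. To reach the extremal length, several edge-disjoint copies of such squared Hamilton paths (or, as suggested by the abstract, squares of Hamilton cycles suitably cut open) are packed into $K_n$ and their dual paths are concatenated by bridge triangles, each contributing exactly two new edges while sharing one edge with the last triangle of one block and one edge with the first triangle of the next. The construction splits by $n \bmod 4$, which governs both the parity of $\binom{n}{2}$ and the type of Hamilton-like decomposition available in $K_n$; in residue classes where a perfect packing of squared Hamilton cycles is achievable, this yields the advertised infinite sequence of tight explicit constructions.

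The case $n = 6$ is exceptional: attaining the upper bound would require $\ell = 7$ triangles using all $15$ edges of $K_6$, with $6$ shared and $9$ private. Since each vertex $v$ has degree $5$, letting $s_v$ denote the number of shared edges at $v$ and $t_v$ the number of triangles through $v$, the identity $5 = p_v + s_v$ together with $2t_v = p_v + 2s_v$ forces $s_v = 2t_v - 5 \in \{1, 3, 5\}$; the global constraint $\sum_v s_v = 12$ then leaves only two distributions, and a finer structural analysis—examining the subgraph of shared edges (an open trail of length $6$ in $K_6$) together with the apex structure of the induced triangle sequence—rules both out. The matching diameter-$5$ complex is then exhibited by hand. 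The main obstacle in the overall argument is the existence and chaining of the packings: one must provide an edge-disjoint packing of squared-Hamilton-type structures in $K_n$ with the exact required edge count, design bridge triangles that do not create any shortcut between triangles of different blocks, and handle the residual edges uniformly across residue classes. The $n = 6$ impossibility likewise demands an argument strictly stronger than the naive edge count, since the local vertex constraints alone admit feasible distributions.
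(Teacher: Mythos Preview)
Your outline has a genuine structural gap. The paper explicitly observes that the ``trail-square'' approach you describe---packing squared Hamilton paths and gluing them end to end with bridge triangles---cannot reach the exact value for infinitely many $n$. In any such vertex-sequential construction (all-zero \layout{}), every vertex other than the first two and last two of the sequence is incident to a number of covered edges that is divisible by four. Hence whenever $n\not\equiv 1\pmod 4$ this forces $\Omega(n)$ edges to remain uncovered, and no amount of clever bridging between squared Hamilton paths can repair it. The paper's key extra idea, which your plan is missing, is to allow \emph{turns} in the \layout{} sequence (non-consecutive $1$'s), so that the dual path is still a path but the complex is no longer the square of a walk. Concretely, the paper builds a periodic ``generating sequence'' on $\mathbb Z/n'\mathbb Z$ with $n'\equiv 1\pmod 4$ (Definition~\ref{def: generating sequence}, Proposition~\ref{prop: with turn}), and for $n\not\equiv 1\pmod 4$ takes $n'=n-2,3,5$ together with carefully engineered missing residues, then attaches the extra vertices via \emph{rotations} and \emph{zig-zags} rather than simple bridges. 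These gadgets are what allow every single edge to be absorbed; your bridge triangles, each contributing two new edges, remain in the vertex-sequential paradigm and inherit its divisibility obstruction.

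Your $n=6$ argument is also unfinished: you arrive at $s_v\in\{1,3,5\}$ with $\sum_v s_v=12$, assert there are only two distributions, and then defer the actual contradiction to an unspecified ``finer structural analysis.'' The paper's argument is both shorter and different: in any length-$7$ triangle path on six vertices covering all edges, the second vertex $b$ of the sequence is incident to exactly three edges in its first occurrence, so its other occurrence must contribute exactly two edges; but only the two extreme positions $a,d$ have that property, forcing $b=d$ and symmetrically $a=c$, whence the edge $\{a,b\}=\{c,d\}$ is repeated.
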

We obtain these values by explicitly constructing simplicial $2$-complexes that have this diameter. It is worth noting that the previously best known lower bound \cite{bohman2022complexes},  with error term of order $d^2/(\log d)^\varepsilon$, was obtained by random methods.

One way to obtain a simplicial $2$-complex with large diameter, formulated in graph theoretic language, is to build a long \emph{trail-square}, that is, a sequence of (not necessarily distinct) vertices such that every pair of vertices occurs at most once within distance two in the sequence.
For example, any ordering of the vertex set provides such a sequence, and the pairs appearing at distance at most two in the sequence form the square\towriteornottowrite{\footnote{The \emph{square} $G^{(2)}$ of a graph $G$ is the graph defined on the vertex set $V(G)$ by connecting the pairs of vertices whose distance in $G$ is at most two.}}{} of a Hamilton path.
But this is only a linear number of pairs however, while in an optimal trail-square we are aiming to cover (almost) all the pairs. 
Our initial approach to this problem was to create a large family of pairwise edge-disjoint squares of Hamilton paths and ``glue’’ them up into a trail-square with the introduction of a new vertex or two at each connection.
How much these new vertices can be reused at more than one connection might depend on the ends of the squares of Hamilton paths being ``well-distributed''.
To gain flexibility in that regard, a decomposition of the edges of the complete graph into squares of Hamilton cycles proves to be quite handy, since each can then be ``cut’’ anywhere to obtain a family of pairwise edge-disjoint squares of Hamilton paths with a good variety of endings.
This leads us to the following natural\towriteornottowrite{, purely graph-theoretical }{ }question:

\begin{question}
\label{quest:HC2}
For which positive integers $n$ can the edge set of $K_n$ be partitioned into squares of Hamilton cycles?
\end{question}

\towriteornottowrite{Unless $n=3$ or $4$, the square of a Hamilton cycle is 4-regular. Consequently for $n\geq 5$, a partition of $E(K_n)$ into squares of Hamilton cycles can only exist if each vertex degree is divisible by 4, i.e.\@ $n \equiv 1 \pmod 4$.}{
 In case a partition of $E(K_n)$ into squares of Hamilton cycles exists, one certainly needs that each vertex has a degree that is divisible by $4$. Hence $n$ being congruent to $1 \pmod 4$ is a necessary condition.}
 This is also conjectured to be sufficient, at least for large enough $n$, as a special case of a general conjecture about packing bounded degree graphs into the complete graph~\cite[Conjecture~1.4]{Oberwolfach}.

The answer to the analogous question about Hamilton cycles has been known since at least 1892.
A construction, which exists for all odd $n$, is attributed to Walecki (see \cite{Walecki} and \cite{alspach2008wonderful}).
The work of Ferber, Lee, and Mousset~\cite{FerberLeeMousset} proves the existence of families of pairwise edge-disjoint squares of Hamilton cycles which cover all but $o(n^2)$ edges of $K_n$.

Here we exhibit an explicit construction of a perfect decomposition of the edges of the complete graph into squares of Hamilton cycles for an infinite sequence of $n$. 

\begin{theorem}
\label{thm:packHC2}
    Let $p \equiv 1 \pmod 4$ be a prime number, such that the order $\ord_p(2)$ of $2$ modulo $p$ is divisible by four. Then $E(K_p)$ can be partitioned into pairwise edge-disjoint copies of $C_p^2$. This is the case in particular if $p\equiv 5 \pmod 8$.
\end{theorem}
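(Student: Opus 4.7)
The natural algebraic setup is to identify the vertices of $K_p$ with $\Z/p\Z$, so that edges are indexed, up to $\pm 1$, by their \emph{difference} in $(\Z/p)^*$. For any $c\in(\Z/p)^*$, write $cC$ for the Hamilton cycle $0,c,2c,\ldots,(p-1)c,0$; since $p$ is prime and $\gcd(c,p)=1$, multiplication by $c$ is a bijection on $\Z/p$, so this really is a Hamilton cycle, and its square $(cC)^2$ is precisely the set of all edges of $K_p$ with difference $\pm c$ or $\pm 2c$. Thus the problem reduces to partitioning the $(p-1)/2$ difference classes in $(\Z/p)^*/\{\pm 1\}$ into $(p-1)/4$ ``doubling pairs'' $\{d,2d\}$; from any such partition we read off a decomposition of $E(K_p)$ into $(p-1)/4$ squared Hamilton cycles.

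To produce such a partition, set $s=\ord_p(2)$ and consider the image $\bar H$ of $\langle 2\rangle$ in $(\Z/p)^*/\{\pm 1\}$. The hypothesis $4\mid s$ guarantees that $-1=2^{s/2}\in\langle 2\rangle$, so $\bar H$ is cyclic of order $\bar s:=s/2$, and moreover $\bar s$ is even. Consequently $\bar H$ is partitioned by the $\bar s/2$ consecutive pairs $\{2^{2k},2^{2k+1}\}$ for $k=0,\ldots,\bar s/2-1$. Choosing coset representatives $\alpha_1,\ldots,\alpha_t$ of $\bar H$ in $(\Z/p)^*/\{\pm 1\}$ (where $t=(p-1)/s$), the translated pairs $\{\alpha_i 2^{2k},\alpha_i 2^{2k+1}\}$ partition the entire quotient into $t\cdot\bar s/2=(p-1)/4$ doubling pairs, and hence give the required decomposition of $E(K_p)$.

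For the final assertion, suppose $p\equiv 5\pmod 8$ and write $p-1=4m$ with $m$ odd. By the second supplement to quadratic reciprocity, $2$ is a quadratic nonresidue modulo $p$, so $\ord_p(2)\nmid(p-1)/2=2m$, while of course $\ord_p(2)\mid 4m$. Since any divisor of $4m$ that fails to divide $2m$ must be a multiple of $4$, we conclude $4\mid\ord_p(2)$. I expect no serious obstacle in the argument beyond bookkeeping: the key facts that each $cC$ is a Hamilton cycle and that $(cC)^2$ contains \emph{every} edge of difference $c$ or $2c$ are both immediate from the parametrisation $v_j=jc$, and the edge-disjointness of the $(p-1)/4$ squared cycles follows automatically from the disjointness of their associated doubling pairs.
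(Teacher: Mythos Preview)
Your proof is correct and follows essentially the same route as the paper's: both identify vertices with $\Z/p\Z$, observe that the square of the Hamilton cycle $0,c,2c,\ldots$ picks up exactly the difference classes $\pm c,\pm 2c$, and then use the coset structure of $\langle 2\rangle$ together with $2^{s/2}=-1$ to partition the nonzero differences into doubling pairs indexed by even powers of $2$ within each coset. Your phrasing via the quotient $(\Z/p)^*/\{\pm 1\}$ is a cosmetic variant of the paper's direct argument in $\mathbb F_p^*$, and your treatment of the $p\equiv 5\pmod 8$ case is the same application of the second supplement to quadratic reciprocity.
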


For our application, this theorem is more advantageous to use than the unspecified $o(n^2)$ error term of~\cite{FerberLeeMousset}. This is because for any $n$ there is a prime $p \in [n-n^{0.53},n]$ which satisfies $p \equiv 5 \pmod 8$,~\cite{Kumchev_consecutive_arithmetic}, so we can cover all but $\mathcal O(n^{1.53})$ edges of $K_n$ by squares of long cycles. Cutting these appropriately and connecting them up into a long trail-square with the introduction of a couple of new vertices in between, one can make sure that only $n^{1.53}$ edges of $K_n$ are not being covered. This way we end up with a construction of diameter $\frac{1}{2} \binom{n}{2} - \mathcal O(n^{1.53})$, already improving the result from~\cite{bohman2022complexes} for $d=2$. 

In relation to Theorem~\ref{thm:main, 4k+1}, for $n=p$ from Theorem~\ref{thm:packHC2} one can make the gluing described above a bit more efficient, so it leaves out only a linear number of edges. 
It turns out however that this can not be improved in general if we insist on a trail-square construction. 
Indeed, if a simplicial $2$-complex is constructed in the trail-square manner, then for any $i\in [n]$ which is not among the first or last two elements of the sequence, the number of $2$-sets of the simplicial complex containing $i$ is a multiple of four. 
Thus, when restricted to vertex-sequential, there is an infinite sequence of $n$, those where $n-1$ is not divisible by four, where the error term will be of order $\Omega (n)$.

Since in Theorem \ref{thm:main, 4k+1}, we want to match the upper bound for all $n$, we have to do it differently and consider simplicial complexes that are not of the vertex-sequential form.
This will be introduced in Section~\ref{sec:Thm1.1}. 
The proof of Theorem~\ref{thm:packHC2} is given in Sections~\ref{sec:proofHC2}.
In the concluding remarks, Section~\ref{sec:conclusion}, we mention a generalization of the ``partition/cut/glue'' approach to $d \ge 3$ and contemplate about a couple of open problems/conjectures, in particular, continue the discussion of Question~\ref{quest:HC2}.

\section{Proof of Theorem~\ref{thm:main, 4k+1}} \label{sec:Thm1.1}

This section is structured as follows.
First, in Section~\ref{sec:visualize}, we will discuss how the simplicial 2-complexes of interest can be encoded as a sequence of vertices together with a $0/1$-sequence.
Afterwards, Section~\ref{sec:gen sequences} will introduce the concept of generating sequences which is the main ingredient for constructing simplicial 2-complexes in a periodic way.
These generating sequences will already be sufficient to find simplicial 2-complexes with optimal diameter when $n\equiv 1\pmod 4$.
This is done in Section ~\ref{sec:4k+1}.

The other three residues are dealt with in the subsequent three subsections.
In all of these cases, we will find a suitable generating sequence for some $n' \equiv 1 \pmod 4$ where $n'$ is only a constant smaller than $n$. This together with an ad-hoc construction for the remaining vertices is sufficient to find a simplicial 2-complex whose diameter is only an additive constant away from the maximum possible diameter.
However, since we want to find simplicial 2-complexes with optimal diameter, we need to carefully modify these constructions to accommodate every edge.

Unsurprisingly, these constructions and the suitable generating sequences only exist when $n$ is large enough.
Fortunately, $n> 30$ is already large enough.
In Appendix~\ref{sec:appendix}, we explicitly give optimal simplicial 2-complexes for the values of $n\le30$. This will then conclude the proof of Theorem \ref{thm:main, 4k+1}.

\subsection{Encoding simplicial 2-complexes}\label{sec:visualize}

In this section, we will just say simplicial complex for a simplicial $2$-complex and sets of size two will be called edges.
As mentioned above, a simplicial complex of diameter $t$ contains a sequence of $t+1$ sets of size three such that two consecutive sets intersect in two vertices and any other pair intersects in at most one vertex.
We call such a sequence of sets of size three \emph{good} and also consider it as a simplicial complex.
If the first and last set also intersect in two vertices, we call the sequence \emph{circular} and also refer to the sequence as \emph{good}.
Such a circular sequence, by abuse of notation, we also call it a \emph{circular simplicial complex}.

We can visualize a good sequence as a sequence of triangles where neighboring triangles share an edge and each vertex is labeled\footnote{Later we will work with residues and elements in $\Z/n\Z$. Thus, we chose the labels to be in $\lbrace 0,\dots, n-1\rbrace$ instead of $\lbrace1,\dots,n\rbrace$.} with an element from $\lbrace 0,\dots n-1\rbrace$ such that every pair of labels appears at most once as an edge.
Figure~\ref{fig:simplicial complex example} gives an example of a simplicial 2-complex for $n=7$ of diameter 7.

\begin{figure}[H]
\centering
\begin{tikzpicture}
    \newdot{a1}{-4,0}
    \newdot{a2}{-2,0}
    \newdot{a4}{2,0}
    {\def\mycolor{red}\newdot{a6}{6,0}}
    \newdot{b1}{-3,1.7321}
    \newdot{b2}{-1,1.7321}
    \newdot{b4}{3,1.7321}
    \newdot{a3}{0,0}
    \newdot{a5}{4,0}
    \newdot{b3}{1,1.7321}
    \newdot{b5}{5,1.7321}
    \draw(a1) -- (a3) -- (b2) -- (b1) -- (a1) (b1) -- (a2) -- (b2);
    \draw(b5) -- (b4) -- (a5) (b5) -- (a5) -- (b4);
    \draw(b2) -- (b4) -- (a4) -- (b3) -- (a3) -- (a5);
    \draw[red](a5) -- (a6) -- (b5);
    \textbe{a1}{0}
    \textbe{a2}{2}
    \textbe{a3}{4}
    \textbe{a4}{5}
    \textbe{a5}{1}
    \textab{b1}{1}
    \textab{b2}{3}
    \textab{b3}{0}
    \textab{b4}{6}
    \textab{b5}{4}
    \def\textcol{red}
    \textbe{a6}{?}
\end{tikzpicture}
\caption{The black sequence of triangles represents a simplicial 2-complex of diameter 7 for $n=7$. We cannot add the red triangle to it to create a simplicial 2-complex of diameter 8 because all edges containing $1$ or $4$ are already used.}
\label{fig:simplicial complex example}
\end{figure}
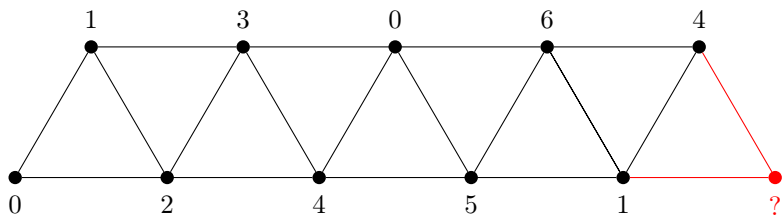

If we add the red triangle, then no matter what label the red vertex gets, an edge will appear twice in the simplicial complex, which creates a shortcut in the dual graph and the corresponding sequence of sets of size three would not be good.
In a simplicial complex, the triangles do not have to be in a straight line, but we can also not add a triangle containing $4$ and $6$ in Figure~\ref{fig:simplicial complex example}, since again this would create a shortcut.
An example of a simplicial complex where the triangles are not in a straight line is shown in Figure~\ref{n=5 and n=9}.
Hence, to encode a simplicial complex we not only have to specify the labels of the vertices but also the layout of the triangles.

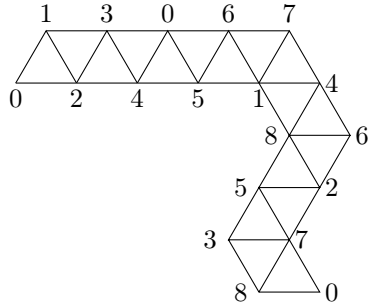
\begin{figure}[H]
\centering
\begin{tikzpicture}[scale=0.8]
        \begin{scope}[shift={(0,0)}]
        \foreach \x in {0,...,4} {
        \begin{scope}[shift={(\x,0)}]
        \draw (0,0) -- (0:1);
        \draw[] (0:1) -- (60:1) -- (0,0);
        \end{scope}
        }
        \draw (0.5,0.86603) -- (4.5,0.86603) (4,0) -- (4.5,-0.866) -- (5,0) -- (5.5,-.866) -- (4.5,-0.866) -- (5,-1.732) -- (5.5,-.866) (4.5,-.866) -- (4,-1.732) -- (5, -1.732) -- (4.5,-2.598) -- (3.5,-2.598) -- (4,-1.732) -- (4.5,-2.598) -- (4,-3.464)  (3.5,-2.598) -- (4,-3.464) -- (5,-3.464) -- (4.5,-2.598);
        \textbe{0,.2}{0}
        \textbe{1,.2}{2}
        \textbe{2,.2}{4}
        \textbe{3,.2}{5}
        \textbe{4,.2}{1}
        \textat{5.2,0}{4}
        \textat{4.2,-0.866}{8}
        \textat{3.7,-1.732}{5}
        \textat{3.2,-2.598}{3}
        \textat{5.7,-0.866}{6}
        \textat{5.2,-1.73}{2}
        \textat{4.7,-2.598}{7}
        \textat{5.2,-3.464}{0}
        \textat{3.7,-3.464}{8}
        \textab{.5,.7}{1}
        \textab{1.5,.7}{3}
        \textab{2.5,0.7}{0}
        \textab{3.5,0.7}{6}
        \textab{4.5,0.7}{7}
        \end{scope}
\end{tikzpicture}
\caption{Simplicial complex for $n=9$.}
\label{n=5 and n=9}
\end{figure}

We encode the sequence of sets of size three with a pair of sequences (\labels{}, \layout{}) of length $t+3$ and $t$ respectively.
The sequence \labels{} is of the form $[x_0,x_1,\dots,x_{t+2}]$ with $x_i \in \lbrace 0,\dots,n-1\rbrace$ for $i=0,\dots,t+2$ and describes how the vertices of the triangles will be labelled.
The sequence \layout{} is of the form $[y_3,\dots,y_{t+2}]$ with $y_i \in \lbrace 0,1 \rbrace$ for $i=3,\dots,t+2$ and encodes at which edge the next triangle is appended.

From these two sequences the \emph{corresponding} sequence of sets of size three and, thereby, the \emph{corresponding} simplicial complex, is constructed \towriteornottowrite{set by set}{1-by-1} as follows:
The first set is $\{x_0,x_1,x_2\}$.
\towriteornottowrite{Then, given a sequence of $i-2$ sets of size three for some $i=3, \ldots, t+2$, such that the last set is $\{x_j,x_{i-2},x_{i-1}\}$ for some\footnote{Even though it is not necessary for the further arguments, we note that $j$ equals $i-3-\ell$ where $\ell$ is the number of consecutive 1's the partial \layout{} vector $[y_3,\dots,y_{i-1}]$ ends in.} $j<i-2$, the next set is defined to be $\{ x_{i-2},x_{i-1},x_i \}$ if the \layout{} entry $y_i=0$ and $\{ x_j,x_{i-1},x_i \}$ if $y_i=1$.}{Then for $i=3,\dots,t+2$ and last set $\{x_j,x_{i-2},x_{i-1}\}$, $j<i-2$, the next set is $\{ x_{i-2},x_{i-1},x_i \}$ if $y_i=0$ and $\{ x_j,x_{i-1},x_i \}$ if $y_i=1$.}
The other way around we can construct the two sequences from a good sequence of $t+1$ sets of size three:
We start at either end of the sequence and let $\{ x_0,x_1,x_2 \}$ be the first set, where $x_1,x_2$ is also contained in the next set.
Then for $i=3,\dots,t+2$ we pick $x_i$ such that the $(i-1)$-st set is $\{ x_j,x_{i-1},x_i \}$, $j<i-1$, and let $y_i=0$ if $x_j=x_{i-2}$ and $y_i=1$, otherwise.
For instance, the simplicial complex in Figure \ref{n=5 and n=9} can be described via
\begin{align*}
    \labels:&&[0,1,2,{}&3,4,0,5,6,1,7,4,8,6,2,5,7,3,8,0]\\
    \layout:&&&\hspace{-.1cm}[0,0,0,0,0,0,0,0,1,0,0,1,0,0,0,1].
\end{align*}

We call the pair of sequences $(\labels{},\layout{})$ \emph{good} if the corresponding sequences of sets of size three are good.
This includes the case that the first two and the last two entries of \labels{} are the same, in which case the simplicial complex is \emph{circular}.
For any edge that is contained in a set of size three in the simplicial complex, we say that this edge is \emph{covered}.
Clearly, the pair of sequences $([x_0,x_1,\dots,x_{t+2}],[y_3,\dots,y_{t+2}])$ is good if and only if the corresponding simplicial complex covers $2t+3$ ($2t+2$ if it is circular) edges.
The following observation summarises the main connection between the sequence of sets of size three and the diameter of the corresponding simplicial complex.

\begin{observation}
    \label{obs:sequences}
    If a sequence of sets of size three is good and covers $2t+3$ edges, then it has diameter $t$ as a simplicial complex.
\end{observation}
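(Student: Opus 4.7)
The proof is essentially a short counting argument together with a structural description of the dual graph, so the plan is straightforward and there is no serious technical obstacle to overcome.

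First, I would translate the hypothesis into information about the number of triangles. Suppose the good sequence has $k$ triangles (sets of size three). The good condition says that consecutive triangles intersect in exactly two vertices, hence share exactly one edge, while non-consecutive triangles intersect in at most one vertex and therefore share no edge. Counting edges inductively, the first triangle contributes three edges, and each later triangle brings three edges of which exactly one coincides with an edge of the immediately preceding triangle and none coincides with edges of earlier triangles. Hence the total number of covered edges equals $3+2(k-1)=2k+1$. Setting $2k+1=2t+3$ yields $k=t+1$, so the sequence consists of $t+1$ triangles.

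Next, I would identify the dual graph of the corresponding simplicial complex. Its vertices are the $t+1$ triangles of the sequence. By the definition of the dual graph, two vertices are adjacent iff the corresponding triangles share an edge, equivalently, share two vertices. By the good condition, this happens exactly for consecutive pairs in the sequence. Therefore the dual graph is precisely the path $P_{t+1}$ on $t+1$ vertices.

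Finally, I would invoke the fact that the diameter of a path on $t+1$ vertices equals $t$, which immediately yields that the simplicial complex has diameter $t$. The only subtle point one has to keep in mind is that the good condition rules out any ``shortcut'' edges in the dual graph, so the counting from the first step gives the full edge set of the complex and the path description of the dual graph is exact rather than just a subgraph.
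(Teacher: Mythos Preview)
Your proof is correct and matches the paper's (implicit) reasoning: the paper does not give a formal proof of this observation but notes just before it that a good $(\labels,\layout)$ pair with $t+1$ triples covers exactly $2t+3$ edges, and the dual graph of a good sequence is a path on its triples, so the diameter is $t$. Your argument makes this explicit by running the edge count in the reverse direction (from $2t+3$ edges to $t+1$ triples), which is precisely what is needed given how the observation is phrased.
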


The goal is to create a good sequence of sets of size three which is as long as possible and, thereby, covers as many edges as possible.
The previous observation immediately implies the upper bound $\floor{\frac{1}{2}\binom{n}{2}-\frac{3}{2}}$ on the diameter of a simplicial complex.
We will show that for $n\ne6$, it is always possible to reach this upper bound.
If $\binom{n}{2}$ is odd, then every edge must be covered, while
if $\binom{n}{2}$ is even, there will be exactly one edge that is not covered.
For example, in Figure~\ref{n=5 and n=9} all edges are covered except for $\{3,6\}$.
Therefore, $n\equiv0, 1 \pmod 4$ will be a bit easier compared to $n\equiv2,3 \pmod 4$.

\subsection{Generating sequences and circular simplicial complexes}\label{sec:gen sequences}

To construct the simplicial complexes we restrict ourselves to those, where the difference of neighboring elements in the \labels{} sequence is periodic.
This will allow us to generate the sequence from a significantly shorter sequence.
For this assume $n=4k+1$ for some integer $k$ and let the $n$ elements be those from $\Z/n\Z$.
We say that the \emph{residue} of a pair of labels $\lbrace i,j\rbrace\in\binom{\Z/n\Z}{2}$ is an $r\in\Z/n\Z\backslash\lbrace 0\rbrace$ such that $\lbrace i-j,j-i\rbrace=\lbrace r,-r\rbrace$. Note that if $r$ is a residue for an edge, then $-r$ is also a residue for this edge.

Now, for example, let $n=13$ and consider the sequence of differences $(1,2,4)$.
By starting at $0$ and periodically adding $+1$, $+2$, and $+4$ in $\Z/n\Z$ we obtain a \labels{} sequence
\[ [0,1,3,7,8,10,1,2,4,8,9,11,2,3,5,9,10,12,3,4,6,10,11,0,4,5,7,11,12,1,5,6,8,12,0,2,7,9,0,1]\]
that together with the all zero \layout{} gives a circular simplicial complex, where every edge appears exactly once as shown in Figure~\ref{fig:n=13}.
Moreover, by removing any edge that is only supporting one triple and the corresponding triple, we obtain a simplicial complex of diameter $H_s(13,2)=37$.
One crucial observation from this example is that the sequence $1,2,4$ together with the pairwise sums $1+2,2+4,4+1$ exactly covers $\{ 1,2,3,4,5,6\}$, the residues of all edges.
To indicate the additional residues that are covered as well, we will later write this sequence as $1 \upper{3} 2 \upper{6} 4 \upper {5}$.
Of course, any circular permutation of the sequence gives the same construction.

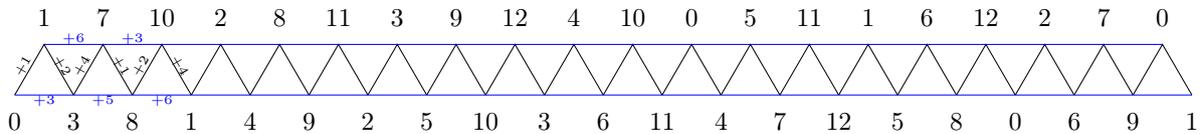
\begin{figure}[H]
    \centering
    \resizebox{\linewidth}{!}{
    \begin{tikzpicture}[scale=0.8]
        \foreach \x in {0,...,18} {
        \begin{scope}[shift={(\x,0)}]
        \draw[blue] (0,0) -- (0:1);
        \draw[black] (0:1) -- (60:1) -- (0,0);
        \end{scope}
        }
        \draw[black] (19,0) -- (19.5,0.86603) -- (20,0);
        \draw[blue] (0.5,0.86603) -- (19.5,0.86603)  (20,0) -- (19,0);
        \textbe{20,0}{1}
        \textbe{0,0}{0}
        \textbe{1,0}{3}
        \textbe{2,0}{8}
        \textbe{3,0}{1}
        \textbe{4,0}{4}
        \textbe{5,0}{9}
        \textbe{6,0}{2}
        \textbe{7,0}{5}
        \textbe{8,0}{10}
        \textbe{9,0}{3}
        \textbe{10,0}{6}
        \textbe{11,0}{11}
        \textbe{12,0}{4}
        \textbe{13,0}{7}
        \textbe{14,0}{12}
        \textbe{15,0}{5}
        \textbe{16,0}{8}
        \textbe{17,0}{0}
        \textbe{18,0}{6}
        \textbe{19,0}{9}
        \textab{0.5,0.86603}{1}
        \textab{1.5,0.86603}{7}
        \textab{2.5,0.86603}{10}
        \textab{3.5,0.86603}{2}
        \textab{4.5,0.86603}{8}
        \textab{5.5,0.86603}{11}
        \textab{6.5,0.86603}{3}
        \textab{7.5,0.86603}{9}
        \textab{8.5,0.86603}{12}
        \textab{9.5,0.86603}{4}
        \textab{10.5,0.86603}{10}
        \textab{11.5,0.86603}{0}
        \textab{12.5,0.86603}{5}
        \textab{13.5,0.86603}{11}
        \textab{14.5,0.86603}{1}
        \textab{15.5,0.86603}{6}
        \textab{16.5,0.86603}{12}
        \textab{17.5,0.86603}{2}
        \textab{18.5,0.86603}{7}
        \textab{19.5,0.86603}{0}
        \node[rotate=60] () at (0.15,0.5) {\tiny{$+1$}};
        \node[rotate=300] () at (0.8,0.5) {\tiny{$+2$}};
        \node[rotate=60] () at (1.15,0.5) {\tiny{$+4$}};
        \node[rotate=300] () at (1.8,0.5) {\tiny{$+1$}};
        \node[rotate=60] () at (2.15,0.5) {\tiny{$+2$}};
        \node[rotate=300] () at (2.8,0.5) {\tiny{$+4$}};
        \node[blue] () at (.5,-0.1) {\tiny{$+3$}};
        \node[blue] () at (1.5,-0.1) {\tiny{$+5$}};
        \node[blue] () at (2.5,-0.1) {\tiny{$+6$}};
        \node[blue] () at (1,0.96) {\tiny{$+6$}};
        \node[blue] () at (2,.96) {\tiny{$+3$}};
    \end{tikzpicture}}
    \caption{Circular simplicial complex for $n=13$. The first and the last triangles are glued together via the edge $\lbrace0,1\rbrace$. It can be cut at any blue edge to create an optimal simplicial complex.}
    \label{fig:n=13}
\end{figure}

\towriteornottowrite{Unfortunately, we were not able to find such a sequence for every $n=4k+1$, but once we allow turns, we are able to find such a sequence for all $n=4k+1$.}{Unfortunately, this does not easily extend to $n=17$, but by including turns we do find a periodic sequence that works.}
For example, \towriteornottowrite{for $n=17$ and the sequence $(3,2,7,6)$,}{from the sequence $(3,2,7,6)$} we can periodically generate the \labels{} sequence as before, but now we put a $1$ in the \layout{} sequence each time we did $+6$ in the step before. We write this as $(3,2,7,\hat 6)$ where the hat above the 6 indicates that we always put a $1$ in the \layout{} sequence each time we did $+6$ in the step before.

Again we obtain a circular simplicial complex that covers all edges and is illustrated in Figure~\ref{fig:n=17}.
As above, we also write this sequence as $3\upper52\upper 97\upper{13}\hat 6\upper{16}$ to indicate the extra edges covered, where the turn gives $7+6+3=16$ as a residue. We can see that for each $r\in(\Z/17\Z)\backslash\lbrace 0\rbrace$, either $r$ or $-r$ appears in this sequence.

\begin{figure}[H]
    \centering
    \resizebox{\linewidth}{!}{
    \begin{tikzpicture}[scale=0.6]
    \tikzstyle{every node}=[font=\footnotesize]
        \foreach \x in {0,...,3} {
        \domath{\newx}{\x*3}
        \domath{\newy}{-\x*1.73205}
        \begin{scope}[shift={(\newx,\newy)}]
        \draw (0,0) -- (1,0) -- (2,-1.73205) -- (3,-1.73205) -- (3.5,-0.86603) -- (2.5,-0.8603) -- (1.5, 0.86603) -- (0.5,0.86603) -- (0,0) (0.5,0.86603) -- (1,0) -- (1.5,0.86603) (1,0) -- (2,0) -- (1.5,-0.86603) -- (2.5,-0.86603) -- (2,-1.73205) (3,-1.73205) -- (2.5,-0.86603);
        \end{scope}
        }

        \def\shiftx{1}
        \def\shifty{7}
        \begin{scope}[shift={(-\shiftx,\shifty)}]
        \foreach \x in {4,...,7} {
        \domath{\newx}{\x*3}
        \domath{\newy}{-\x*1.73205}
        \begin{scope}[shift={(\newx,\newy)}]
        \draw (0,0) -- (1,0) -- (2,-1.73205) -- (3,-1.73205) -- (3.5,-0.86603) -- (2.5,-0.8603) -- (1.5, 0.86603) -- (0.5,0.86603) -- (0,0) (0.5,0.86603) -- (1,0) -- (1.5,0.86603) (1,0) -- (2,0) -- (1.5,-0.86603) -- (2.5,-0.86603) -- (2,-1.73205) (3,-1.73205) -- (2.5,-0.86603);
        \end{scope}
        }
        \end{scope}

        \textbe{0,0.2}{0}
        \textab{0.5,0.66603}{3}
        \textbe{1,0.2}{5}
        \textab{1.5,0.66603}{12}
        \textri{1.8,0}{1}
        \textle{1.7,-0.86603}{4}
        \textab{2.5,-1.06603}{6}
        \textbe{2,-1.53205}{13}
        
        \domath{\newx}{3}
        \domath{\newy}{-1.73205}
        \begin{scope}[shift={(\newx,\newy)}]
        \textbe{0,0.2}{2}
        \textab{0.5,0.66603}{5}
        \textbe{1,0.2}{7}
        \textab{1.5,0.66603}{14}
        \textri{1.8,0}{3}
        \textle{1.7,-0.86603}{6}
        \textab{2.5,-1.06603}{8}
        \textbe{2,-1.53205}{15}
        \end{scope}

        \domath{\newx}{2*3 }
        \domath{\newy}{2*-1.73205}
        \begin{scope}[shift={(\newx,\newy)}]
        \textbe{0,0.2}{4}
        \textab{0.5,0.66603}{7}
        \textbe{1,0.2}{9}
        \textab{1.5,0.66603}{16}
        \textri{1.8,0}{5}
        \textle{1.7,-0.86603}{8}
        \textab{2.5,-1.06603}{10}
        \textbe{2,-1.53205}{0}
        \end{scope}

        \domath{\newx}{3*3}
        \domath{\newy}{3*-1.73205}
        \begin{scope}[shift={(\newx,\newy)}]
        \textbe{0,0.2}{6}
        \textab{0.5,0.66603}{9}
        \textbe{1,0.2}{11}
        \textab{1.5,0.66603}{1}
        \textri{1.8,0}{7}
        \textle{1.7,-0.86603}{10}
        \textab{2.5,-1.06603}{12}
        \textbe{2,-1.53205}{2}
        \textbe{3,-1.53205}{8}
        \textab{3.5,-1.06602}{11}
        \end{scope}

        \textat{12.8,-6.5}{$\dots$}
        \textat{10.8,.6}{$\dots$}

        \domath{\newx}{4*3 - \shiftx}
        \domath{\newy}{4*-1.73205 + \shifty}
        \begin{scope}[shift={(\newx,\newy)}]
        \textbe{0,0.2}{8}
        \textab{0.5,0.66603}{11}
        \textbe{1,0.2}{13}
        \textab{1.5,0.66603}{3}
        \textri{1.8,0}{9}
        \textle{1.7,-0.86603}{12}
        \textab{2.5,-1.06603}{14}
        \textbe{2,-1.53205}{4}
        \end{scope}

        \domath{\newx}{5*3 - \shiftx}
        \domath{\newy}{5*-1.73205 + \shifty}
        \begin{scope}[shift={(\newx,\newy)}]
        \textbe{0,0.2}{10}
        \textab{0.5,0.66603}{13}
        \textbe{1,0.2}{15}
        \textab{1.5,0.66603}{5}
        \textri{1.8,0}{11}
        \textle{1.7,-0.86603}{14}
        \textab{2.5,-1.06603}{16}
        \textbe{2,-1.53205}{6}
        \end{scope}

        \domath{\newx}{6*3 - \shiftx}
        \domath{\newy}{6*-1.73205 + \shifty}
        \begin{scope}[shift={(\newx,\newy)}]
        \textbe{0,0.2}{12}
        \textab{0.5,0.66603}{15}
        \textbe{1,0.2}{0}
        \textab{1.5,0.66603}{7}
        \textri{1.8,0}{13}
        \textle{1.7,-0.86603}{16}
        \textab{2.5,-1.06603}{1}
        \textbe{2,-1.53205}{8}
        \end{scope}

        \domath{\newx}{7*3 - \shiftx}
        \domath{\newy}{7*-1.73205 + \shifty}
        \begin{scope}[shift={(\newx,\newy)}]
        \textbe{0,0.2}{14}
        \textab{0.5,0.66603}{0}
        \textbe{1,0.2}{2}
        \textab{1.5,0.66603}{9}
        \textri{1.8,0}{15}
        \textle{1.7,-0.86603}{1}
        \textab{2.5,-1.06603}{3}
        \textbe{2,-1.53205}{10}
        \textab{3.5,-1.06603}{2}
        \textbe{3,-1.53205}{16}
        \textab{4.5,-1.06603}{11}
        \textbe{4,-1.53205}{4}
        \textbe{5,-1.53205}{0}
        \textbe{4.5, -2.4}{3}
        \draw (3,-1.73205) -- (5,-1.73205) -- (4.5,-0.86603) -- (3.5,-0.86603) -- (4,-1.73205) -- (4.5,-0.86603)  (5,-1.73205) -- (4.5, -2.598) -- (4,-1.73205);
        \end{scope}
        
        \def\textcol{red}
        \node[red, rotate=60] () at (0.15,0.5) {\scalebox{.9}{
\tiny{$+3$}}};
        \node[red, rotate=300] () at (0.8,0.5) {\scalebox{.9}{
\tiny{$+2$}}};
        \node[red, rotate=60] () at (1.15,0.5) {\scalebox{.9}{
\tiny{$+7$}}};
        \node[red, rotate=300] () at (1.8,0.5) {\scalebox{.9}{
\tiny{$+6$}}};
        \node[red, rotate=240] () at (1.9,-0.4) {\scalebox{.9}{
\tiny{$+3$}}};
        \node[red, rotate=0] () at (2,-0.76603) {\scalebox{.9}{
\tiny{$+2$}}};
\node[red, rotate=240] () at (2.4,-1.2) {\scalebox{.9}{
\tiny{$+7$}}};
 \node[red, rotate=0] () at (2.5,-1.63205) {\scalebox{.9}{
\tiny{$+6$}}};
        \node[blue] () at (.5,0.1) {\tiny{$+5$}};
        \node[blue] () at (1.5,0.1) {\tiny{$+13$}};
        \node[blue] () at (1,0.96) {\tiny{$+9$}};
    \node[blue, rotate=300] () at (1.3,-0.3) {\scalebox{.9}{
\tiny{$+16$}}};

    \end{tikzpicture}}
    \caption{Circular simplicial complex for $n=17$ corresponding to the generating sequence $3\upper52\upper 97\upper{13}\hat 6\upper{16}$}
    \label{fig:n=17}
\end{figure}

Next, we will identify the properties of the sequence that make the previous two examples work in general.

\begin{definition}\label{def: generating sequence}
    A sequence $(a_0,\dots,a_{m-1})$ with elements in $\Z/n\Z$ together with a set $I \subs \lbrace0,\dots,m-1\rbrace$ is a \emph{generating sequence} for $n=4k+1$ if the following properties are satisfied: 
    \begin{enumerate}
        \item\label{enum: coprime sum} $\sum_{i=0}^{m-1} a_i$ is coprime to $n$.
        \item\label{enum: no consecutive turns} For all $i \in I$ we have $i+1 \not\in I$.
        \item\label{enum: no residue twice} $| \{ \pm a_0,\dots,\pm a_{m-1},\pm c_0, \dots, \pm c_{m-1} \} | = 4m$, where $c_i\coleq a_i+a_{i+1}$ if $i \not\in I$ and $c_i\coleq a_{i-1}+a_i+a_{i+1}$ if $i \in I$ and indices are understood modulo $m$, e.g.\@ $a_m\coleq a_0$.
    \end{enumerate}
    For any residue $r \in \{ 1,\dots,2k \}$ that is not contained in $\{ \pm a_0,\dots,\pm a_{m-1},\pm c_0, \dots, \pm c_{m-1} \} $ we say that $r$ is \emph{missing from the generating sequence}. Otherwise, we say that the residue is \emph{covered} by the sequence. We write generating sequences as $a_0 \upper{c_0} a_1 \upper{c_1} \dots a_{m-1} \upper{c_{m-1}}$ where we put $\hat{~}$ over $a_i$ if $i\in I$. In that context, we refer to $(a_1,\dots,a_{m-1})$ as \emph{black sequence} and $(c_1,\dots,c_{m-1})$ as \emph{blue sequence}.
\end{definition}
The examples $1\upper 32\upper 64\upper 5$ and $3\upper 52\upper 97\upper{13}\hat{6}\upper{16}$ are generating sequences with no missing residues for $n=13$ and $n=17$ respectively.

We solely give this definition for $n \equiv 1 \pmod 4$, because only then it is possible that no residue $r$ is missing, which can happen if and only if $m=k$ (cf.\@ property \ref{enum: no residue twice}).

A missing residue $r$ corresponds to a $2$-factor in the complete graph $\binom{\Z/n\Z}{2}$ since $r\ne -r$. This has $n$ edges and is a Hamilton cycle if and only if $r$ is coprime to $n$.

Generating sequences are useful because they give us circular simplicial complexes that contain all edges whose residue is covered by the generating sequence:

\begin{definition}
If $(a_0,\dots, a_{m-1})$ with $I \subs \lbrace0,\dots,m-1\rbrace$ is a generating sequence for $n$, then we define the \emph{corresponding} circular simplicial complex with \labels{} sequence $[x_0,x_1,\dots,x_{mn+1}]$ with $x_i\in \Z/n\Z$ and \layout{} sequence $[y_3,\dots,y_{mn+1}]$ as follows:
We start with $x_0 = 0$, and define recursively \[x_{i+1} = x_i+a_{i\text{  mod $m$}}.\]
For the \layout, we simply define for each $i=3,\dots,mn+1$ that $y_i = 1$ if $(i-1)\text{ mod } m \in I$  and $y_i=0$ otherwise.
\end{definition}
Note that this implies $x_{mn}=n\cdot \sum_{i=0}^{m}a_i=0=x_0$ and $x_{mn+1}=x_1$ and, thus, this will be a circular simplicial complex.

\begin{proposition}\label{prop: with turn}
    Let $k\ge 3$, $n=4k+1$ and suppose that a generating sequence $(a_0,\dots,a_{m-1})$ with $I\subs \lbrace0,\dots,m-1\rbrace$ exists for $n$.
    Then the $(\labels,\layout)$ pair of the corresponding circular simplicial complex is good and it covers exactly those edges whose residue is covered by the generating sequence.
\end{proposition}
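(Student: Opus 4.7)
The plan is to show that the $mn$ triangles $T_2, \ldots, T_{mn+1}$ of the corresponding circular complex contribute $2mn$ pairwise distinct edges, which are precisely the edges of $\binom{\Z/n\Z}{2}$ whose residue class lies in $\{\pm a_r, \pm c_r : 0 \leq r < m\}$. This will simultaneously give goodness of the $(\labels, \layout)$ pair and the claimed coverage.

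First I would enumerate the edges. Each $T_i$ for $3 \le i \le mn+1$ shares exactly one edge with $T_{i-1}$, contributing two \emph{new} edges: a \emph{side} $\{x_{i-1}, x_i\}$ and a \emph{diagonal} equal to $\{x_{i-2}, x_i\}$ in the no-turn case or $\{x_{i-3}, x_i\}$ in the turn case. Identifying $j = i-3$ in the turn case uses property~(\ref{enum: no consecutive turns}), which forces $T_{i-1}$ to be no-turn, so $T_{i-1} = \{x_{i-3}, x_{i-2}, x_{i-1}\}$. Property~(\ref{enum: coprime sum}) ensures $x_{mn} = x_0$ and $x_{mn+1} = x_1$, and then $T_{mn+1}$ shares $\{x_0, x_1\}$ with $T_2$, closing the complex into a circular one with exactly $mn$ sides and $mn$ diagonals.

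Next I would compute residues using $x_i - x_{i-1} = a_{(i-1) \bmod m}$: the side of $T_i$ has residue class $\{\pm a_{(i-1) \bmod m}\}$, and an index shift shows the diagonal has residue class $\{\pm c_r\}$ for a suitable $r$, with the turn/no-turn dichotomy of $T_i$ matching exactly the case distinction ``$r \in I$'' versus ``$r \notin I$'' in the definition of $c_r$. Fixing $r \in \{0, \ldots, m-1\}$, the $n$ sides of residue $\pm a_r$ have left endpoints $x_r + kS$ for $k = 0, 1, \ldots, n-1$, where $S = \sum_j a_j$; since $\gcd(S, n) = 1$ by property~(\ref{enum: coprime sum}), these endpoints run through all of $\Z/n\Z$, so the $n$ distinct edges of residue class $\{\pm a_r\}$ are each realized as a side exactly once. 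An analogous argument handles the diagonals and the classes $\{\pm c_r\}$. Finally, property~(\ref{enum: no residue twice}) guarantees that the $2m$ residue classes $\{\pm a_r\}, \{\pm c_r\}$ are pairwise distinct, so sides and diagonals form disjoint edge sets; hence all $2mn$ enumerated edges are distinct, which is exactly the goodness of $(\labels, \layout)$, and together they exhaust precisely those edges whose residue is covered by the generating sequence.

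The main technical delicacy is the index bookkeeping modulo $m$ in the second paragraph, specifically verifying that the diagonal's residue formula matches the definition of $c_r$ with the correct case selection for turn versus no-turn triangles. Once this alignment is settled, the remainder reduces to a clean counting-by-congruence-classes argument resting directly on properties~(\ref{enum: coprime sum}) and~(\ref{enum: no residue twice}).
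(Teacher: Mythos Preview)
Your proposal is correct and follows essentially the same approach as the paper. Both arguments partition the edges of the complex by residue class, use property~(\ref{enum: coprime sum}) (coprimality of $\sum a_i$ with $n$) to show that within each class the $n$ endpoints run through all of $\Z/n\Z$, and invoke property~(\ref{enum: no residue twice}) to guarantee the $2m$ classes $\{\pm a_r\},\{\pm c_r\}$ are pairwise disjoint; the only difference is organizational---you enumerate forward from triangles into sides and diagonals, while the paper fixes a target edge and locates it among the $\{x_{sm+\ell},x_{sm+\ell+2}\}$ (or $\{x_{sm+\ell-1},x_{sm+\ell+2}\}$ in the turn case).
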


The circular simplicial complex obtained with this proposition covers exactly $2mn$ edges.
Therefore, removing any edge that is only in one set of size three and this set creates a simplicial complex that is no longer circular and covers $2mn-1$ edges.
By Observation~\ref{obs:sequences} this simplicial complex has diameter $mn-2$, which is $kn-2 = \tfrac 12 \binom{n}{2}-2$ for $m=k$.
Therefore, the existence of a generating sequence will already be sufficient to prove Theorem~\ref{thm:main, 4k+1} for $n \equiv 1 \pmod 4$.
We note that from a generating sequence we could also directly construct the simplicial complex, but we think that this intermediate step simplifies the arguments and is also helpful for the reader in the remainder of this section to understand the constructions.

\begin{proof}[Proof of Proposition~\ref{prop: with turn}.]
By construction, the circular simplicial complex can only contain edges whose residue appears in the generating sequence.
Let $\lbrace u,v \rbrace\in \binom{\Z/n\Z}{2}$ be an edge whose residue is covered by the generating sequence.
By \ref{enum: no residue twice} of Definition \ref{def: generating sequence}, there is a unique $\ell\in\lbrace0,\dots,m-1\rbrace$ such that $u-v$ or $v-u$ is $a_\ell$, $a_\ell+a_{\ell+1}$ and $\ell\not\in I$, or $a_{\ell-1}+a_\ell+a_{\ell+1}$ and $\ell\in I$.
W.l.o.g.\@ we can assume that it is $u-v$. 

Let us first consider the case that $u-v=a_\ell$.
The simplicial complex contains the edges \[\lbrace x_\ell,x_{\ell+1}\rbrace, \lbrace x_{m+\ell},x_{m+\ell+1}\rbrace,\dots,\lbrace x_{(n-1)m+\ell},x_{(n-1)m+\ell+1}\rbrace.\]
By definition of $x_i$, each of these edges has the same residue as the edge $\lbrace u,v\rbrace$.
Since there are just $n$ edges with this residue, it is enough to show that these edges are pairwise distinct.
Then we know that $\lbrace u,v\rbrace$ must be among these edges. 

Suppose there are distinct $s_1,s_2\in\lbrace 0,\dots,n-1\rbrace$ such that $\lbrace x_{s_1m+\ell},x_{s_1m+\ell+1}\rbrace=\lbrace x_{s_2m+\ell},x_{s_2m+\ell+1}\rbrace$.
\towriteornottowrite{First, we will show that $x_{s_1m+\ell}=x_{s_2m+\ell}$ and $x_{s_1m+\ell+1}=x_{s_2m+\ell+1}$. Assume this is not the case, i.e.\@ $x_{s_1m+\ell}=x_{s_2m+\ell+1}$ and $x_{s_1m+\ell+1}=x_{s_2m+\ell}$. Plugging this into the equation $x_{s_1m+\ell+1} - x_{s_1m+\ell}=a_\ell = x_{s_2m+\ell+1} - x_{s_2m+\ell}$, we get $x_{s_2m+\ell} - x_{s_1m+\ell} = x_{s_1m+\ell} - x_{s_2m+\ell}$ which is a contradiction since $n$ is odd and $a_\ell\ne 0$. Hence, $x_{s_1m+\ell}=x_{s_2m+\ell}$ and $x_{s_1m+\ell+1}=x_{s_2m+\ell+1}$.

Thus, we can conclude}{
Since $x_{s_1m+\ell+1} - x_{s_1m+\ell}=a_\ell = x_{s_2m+\ell+1} - x_{s_2m+\ell}$ and $n$ is odd, we must have $x_{s_1m+\ell}=x_{s_2m+\ell}$ and $x_{s_1m+\ell+1}=x_{s_2m+\ell+1}$.
But by definition, we have}
\[0=x_{s_1m+\ell}-x_{s_2m+\ell}=(s_1-s_2)\cdot\sum_{i=0}^{m-1}a_i\]
and $\sum_{i=0}^{m-1}a_i$ is coprime to $n$ by \ref{enum: coprime sum} of Definition \ref{def: generating sequence}.
Thus, $s_1-s_2$ must be divisible by $n$ which is a contradiction to our assumption on $s_1$ and $s_2$.
Hence, $\lbrace u,v\rbrace$ must appear in the simplicial complex.

The case that $u-v=a_{\ell}+a_{\ell+1}$ and $\ell\not\in I$ is quite similar as the simplicial complex contains the edges \[\lbrace x_\ell,x_{\ell+2}\rbrace, \lbrace x_{m+\ell},x_{m+\ell+2}\rbrace,\dots,\lbrace x_{(n-1)m+\ell},x_{(n-1)m+\ell+2}\rbrace\]
which are all distinct and have endpoints which differ by $a_{\ell}+a_{\ell+1}$.
Thus, $\lbrace u,v\rbrace$ must be among them.

\towriteornottowrite{The final case $u-v=a_{\ell-1}+a_{\ell}+a_{\ell+1}$ and $\ell\in I$ proceeds also quite similar. There, we note that}{Finally, in the case $u-v=a_{\ell-1}+a_{\ell}+a_{\ell+1}$ and $\ell\in I$ we note that} as $\ell-1\not\in I$ the simplicial complex contains the edges
\[\lbrace x_{\ell-1},x_{\ell+2}\rbrace, \lbrace x_{m+\ell-1},x_{m+\ell+2}\rbrace,\dots,\lbrace x_{(n-1)m+\ell-1},x_{(n-1)m+\ell+2}\rbrace\]
which are all distinct and have endpoints which differ by $a_{\ell}+a_{\ell+1}$.
Thus, $\lbrace u,v\rbrace$ must be among them.
\end{proof}

\subsection{The Construction for $n=4k+1$}\label{sec:4k+1}
In this section, we focus on the case $n=4k+1$, which means that we can apply the tools from the previous section directly.
We will construct a simplicial complex with diameter $\frac{1}{2}\binom{n}{2}-2$, which is best possible.

By Proposition~\ref{prop: with turn}, we only have to find a generating sequence (maybe with turns) such that no residue is missing.
This gives a circular simplicial complex that covers all edges of $\binom{\Z/n\Z}{2}$.
By removing an arbitrary edge that is only contained in one set of size three and this set of size three, we get a simplicial complex with diameter $\frac{1}{2}\binom{n}{2}-2$ by Observation~\ref{obs:sequences}.

\begin{lemma} \label{lem:4k+1}
    For $k\geq 3$ there is a generating sequence for $n=4k+1$ of length $k$.
\end{lemma}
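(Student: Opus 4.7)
My plan is to prove the lemma by explicit construction. Because $m = k$, condition (iii) of Definition~\ref{def: generating sequence} forces the $4k$ elements $\{\pm a_i,\pm c_i\}_{i=0}^{k-1}$ to form exactly the set $\{\pm 1, \pm 2, \ldots, \pm 2k\}$ of all nonzero residues modulo $n = 4k+1$. So the core combinatorial task is to partition these $2k$ sign-classes into $k$ pairs, each of the form $\{a\text{-value}, c\text{-value}\}$, where the $c$-values are determined by consecutive sums (or triple sums at a turn). Guided by the worked examples $1\upper{3}2\upper{6}4\upper{5}$ for $k=3$ and $3\upper{5}2\upper{9}7\upper{13}\hat{6}\upper{16}$ for $k=4$, my strategy is to define one or two families of sequences depending on $k \bmod 4$ (the small cases $k=3,4$ already fitting the pattern).

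I would then split into cases according to $k \bmod 4$. In each case I would define the sequence $(a_0, \ldots, a_{k-1})$ by an interleaved arithmetic rule — roughly, small values $a_{2i}$ increasing from the bottom and large values $a_{2i+1}$ decreasing from $2k$ (suitably shifted), so that the consecutive sums $c_i = a_i + a_{i+1}$ land in the "middle" gap of $\{1, \ldots, 2k\}$ that the $a_i$'s leave behind. For the residue classes where no such straight sequence can achieve full coverage (as in $k=4$), I would insert a single turn at a well-chosen index $i \in I$, replacing the problematic pair $(c_{i-1}, c_i)$ by the triple sum $c_i = a_{i-1} + a_i + a_{i+1}$, and verify that this substitution redirects the corresponding residue to the unique missing slot.

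With the candidate sequences in hand, I would verify the three conditions of Definition~\ref{def: generating sequence}:
\begin{enumerate}
\item The sum $\sum a_i$ reduces by a short arithmetic identity to a value coprime to $n = 4k+1$; this is the cleanest of the three.
\item Non-adjacency of turns is automatic once $|I| \le 1$ (or, more generally, once $I$ is by construction spread out).
\item The covering condition is verified by computing $\{|a_i|\}$ and $\{|c_i|\}$ as explicit arithmetic progressions in $\{1, \ldots, 2k\}$ (here $|\cdot|$ denotes the canonical representative in $\{1,\ldots,2k\}$ of $\pm r$) and checking that they tile the set without collision.

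\end{enumerate}

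The main obstacle is condition (iii). Residues are taken modulo $n$, so large sums like $c_i = a_i + a_{i+1}$ may wrap around and collide with an $a$-value or another $c$-value in a way that is hard to see without carefully tracking when a $c_i$ exceeds $2k$ (and thus contributes its negative, $n - c_i$, as its canonical representative). Handling this wrap-around cleanly — especially at the cyclic boundary $c_{k-1} = a_{k-1} + a_0$ and at any turn index — is where the case analysis gets delicate and where the ad-hoc small cases $k=3,4$ become the template for the general pattern.
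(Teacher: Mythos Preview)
Your high-level plan matches the paper: explicit periodic sequences, a case split on a residue class of $k$, at most one turn, and a residue-class bookkeeping argument for condition (iii). That is exactly what the paper does, and your identification of wrap-around at the cyclic boundary and at the turn index as the delicate points is accurate.

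However, the concrete construction you sketch --- ``small values $a_{2i}$ increasing from the bottom and large values $a_{2i+1}$ decreasing from $2k$, so that the consecutive sums land in the middle gap'' --- is not what actually works, and it is unlikely to work. If you try it even for $k=5$ you will find that consecutive sums of a small and a large value are all nearly $2k$, so the $c_i$'s pile up and collide rather than filling a gap. The paper's sequences are organized differently: the $a_i$'s are chosen to cover the residue classes $2$ and $3 \pmod 4$ in $\{1,\ldots,2k\}$, and then the $c_i=a_i+a_{i+1}$ automatically form an arithmetic progression with common difference $4$ that sweeps the residues $1 \pmod 4$ and then (after wrap-around modulo $n$) the residues $0 \pmod 4$. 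Concretely, for $k=2\ell$ even the paper uses $(3,2,7,6,\ldots,4\ell-1,\widehat{4\ell-2})$ with a single turn at the end to catch the residue $1$; for $k=2\ell+1$ odd it uses a short ad-hoc prefix followed by $11,6,15,10,19,14,\ldots$ alternating $-5,+9$. So the case split is by parity, not modulo $4$, and the organizing principle is residues mod $4$ rather than size.

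Finally, you describe condition (i) as ``the cleanest''; in the paper it is not --- for each family the sum $\sum a_i$ is a quadratic in $\ell$, and one has to run a short Euclidean-algorithm computation to see that it is coprime to $8\ell+1$ or $8\ell+5$. It is routine, but it is not a one-line identity.
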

\begin{proof}
For $k\leq 5$, the following sequences can be easily checked to fulfill all three conditions of Definition~\ref{def: generating sequence}. 

 For $k=3$ and $n=13$, the sequence $1 \upper{3} 2 \upper{6} 4 \upper {5}$ works.

 For $k=4$ and $n=17$, the sequence $1 \upper{3} 2 \upper{8} 6 \upper {10} 4 \upper{5}$ works.

 For $k=5$ and $n=21$, the sequence $1 \upper{3} 2 \upper{9} 7 \upper {13} 6 \upper{10} 4 \upper{5}$ works.

For $k\ge 6$, we use a different type of sequence depending on the parity of $k$.

{\em Case 1.} Let $k=2\ell+1$ be odd and $\ell\ge 3$. 
We claim that the sequence 
\[(-4\ell+6,4\ell+2,-4,-9,2,1,11,6,15,10,19,14,\dots,4\ell-10,4\ell -1) \, \]
is a generating sequence of length $k$ for $n=4k+1=8\ell+5$. The first seven elements of this sequence are created to work for $n=29=4\cdot (2\cdot 3 +1) +1$. 
Indeed, including the values of the corresponding blue sequence
\[-6 \upper{8} 14 \upper{10} -4 \upper{-13} -9 \upper{-7} 2 \upper{3} 1 \upper{12} 11 \upper{5},\]
it is not difficult to check by hand that all three conditions of Definition \ref{def: generating sequence} are fulfilled. 

This sequence is then generalized for $\ell \geq 4$ and extended, such that starting with $11$ the subsequent elements are always created by alternately subtracting $5$ and adding $9$.
Since there is no $~\hat{~}~$ in the sequence, the corresponding blue sequence is created by summing the neighboring black sequence elements: 
\[(-4\ell+6) \upper{8} (4\ell+2) \upper{4\ell-2} -4 \upper{-13} -9 \upper{-7} 2 \upper {3} 1\upper{12}11\upper{17}6\upper{21}15\upper{25}10\upper{29}19\upper{33}14,\dots,(4\ell-10)\upper{8\ell-11}(4\ell -1)\upper{5}.\]

We have to now check for each $r\in\lbrace 1,\dots, 4\ell+2\rbrace$ that either $r$ or $-r$ appears among the blue or black numbers. One can check this by hand for $r=1,\dots,15$. For the rest, we proceed by the residue modulo $4$. Observe that the black sequence starting at $11$ is periodic modulo $4$. It covers the numbers $11,15,19,\dots, 4\ell-1$, i.e. all which are $3$ (mod 4), as well as the numbers $6,10,14,\dots, 4\ell-10$. These are almost all numbers which are $2$ (mod 4). The remaining three numbers of this residue class, i.e. $-(4\ell-6)$, $4\ell-2$, and $4\ell+2$, appear at the beginning of the sequence.

The blue sequence first covers all numbers which are $1 \pmod 4$ ($17,21,25,\dots, 4\ell +1$), before it continues with $4\ell+5=-(4\ell), 4\ell+9=-(4\ell-4),\dots, 8\ell-11=-16$. Thus, the sequence also covers all numbers which are divisible by 4 and hence fulfills \ref{enum: no residue twice} of Definition \ref{def: generating sequence}. Since $I$ is empty, \ref{enum: no consecutive turns} is fulfilled trivially. For \ref{enum: coprime sum} we have to check that the sum of the elements of the sequence is coprime to $n=8\ell+5$.
The sum is
\begin{align*}(-4\ell+6)+ (4\ell+2)-4&-9+2+1+11+\sum_{i=1}^{\ell-3}(4i+2)+\sum_{i=1}^{\ell-3}(4i+11)\\&=9+2(\ell-3)+4\cdot \frac{(\ell-3)(\ell-2)}{2}+11(\ell-3)+4\cdot \frac{(\ell-3)(\ell-2)}{2}\\
&=9+13(\ell-3)+4\ell^2-20\ell+24\\
&=4\ell^2-7\ell -6.\end{align*}
For the gcd, we get
\begin{align*} 
    \left(4\ell^2-7\ell-6,8\ell+5\right)&=\left(8\ell^2-14\ell-12,8\ell+5\right)
    =(-19\ell-12,8\ell+5)
    =(-3\ell-2,8\ell+5)\\
    &=(-3\ell-2,-\ell-1)
    =1.
\end{align*}
This finishes the proof when $n$ is of the form $8\ell +5$.

{\em Case 2.} Let $k=2\ell$ be even and $\ell \ge  3$. 
Here we were not able to find a generating sequence of length $k$ without turns for all even $k$.
But if we allow just a single turn, then there is such a sequence:
\[(3,2,7,6,\dots, 4\ell-1,\widehat{4\ell-2}).\]
This sequence is a generalization of our example sequence \towriteornottowrite{in Figure~\ref{fig:n=17}}{above} for $n=17$ such that starting with $3$ the subsequent elements are always created by alternately subtracting $1$ and adding $5$. The lone turn happens at the last element. It turns out that this is a generating sequence of length $k$ for $n=8\ell+1$. Including the blue elements we have 
    \[3\upper 52\upper 97\upper{13}6\dots4\ell-6\upper{8\ell-7}4\ell-1\upper{8\ell-3}\hat{4\ell-2}\upper{8\ell}.\]
 To verify \ref{enum: no residue twice} of Definition \ref{def: generating sequence} we again consider numbers based on residue modulo $4$. 
Observe that the black sequence is periodic modulo $4$. It covers the numbers $3,7,11,\dots, 4\ell-1$, i.e. all which are $3$ (mod 4), as well as the numbers $2,6,10,\dots, 4\ell-2$, i.e. all which are $2 \pmod 4$.
  The blue sequence first covers numbers at least $5$ with residue $1 \pmod 4$ ($5,9,\dots, 4\ell-3$), before it continues with $4\ell+1=-(4\ell),4\ell+5=-(4\ell-4),\dots,8\ell-3=-4$, i.e.\@ it covers all numbers with residue $0 \pmod 4$. The only residue missing is $1 = -8\ell$. This is covered by the turn at the very end.

    Since $\abs I = 1$, \ref{enum: no consecutive turns} of the definition is satisfied trivially. For \ref{enum: coprime sum}, we again check that the sum of the $a_i$ is coprime to $n$. The sum is
    \[\sum_{i=1}^\ell(4i-1)+\sum_{i=1}^\ell(4i-2)=4\cdot \ell(\ell+1)-3\ell=4\ell^2+\ell.\]
    For the gcd, we get
    \begin{equation*}
        \left(4\ell^2+\ell,8\ell+1\right)=\left(8\ell^2+2\ell,8\ell+1\right)
        =(\ell,8\ell+1)
        =1.
    \end{equation*}
    This finishes the proof.
\end{proof}
This lemma together with Proposition~\ref{prop: with turn} and Observation~\ref{obs:sequences} proves the case $n=4k+1$ of Theorem~\ref{thm:main, 4k+1} when $k \ge 3$.

\subsection{The Construction for $n=4k+4$}

It is not possible to find a generating sequence with no missing residues when $n \equiv 0 \pmod 4$, because for the former we need the number of non-zero elements in $\Z/n\Z$ to be divisible by 4.
Instead, we will give an optimal construction for the case $n=4k+4$ by extending a generating sequence for $n'=4k+1$. 
Note that when $n \equiv 0 \pmod 4$, $\binom n2$ is even, and, hence, we again have one edge to spare.

We let the vertex set be $(\Z/n'\Z) \cup \{ a,b,c \}$ where $a,b,c\not\in\Z/n'\Z$ are three distinct symbols.
By the previous section, we can find a generating sequence for $n'$ and get a circular simplicial complex that covers all edges of $\binom{\Z/n'\Z}{2}$.
But then the edges left uncovered form a $K_{n',3}$ and a triangle on the smaller side \towriteornottowrite{(i.e.\@ the part of the bipartition of size 3)}{}. This would be a linear loss compared to the upper bound and there would be no ``space'' to extend the simplicial complex significantly since every new triangle must intersect $\lbrace a,b,c\rbrace$ in two vertices.
Therefore, we will instead find a generating sequence that misses two residues $r_1$ and $r_2$, that are coprime to $n'$, and obtain a circular simplicial complex that covers all edges of $\binom{\Z/n'\Z}{2}$ except for two Hamilton cycles.
Now the remaining edges form a $K_{n',3}$ together with two cycles of length $n'$ on the larger side \towriteornottowrite{(i.e.\@ the part of the bipartition of size $n'$)}{} and a triangle on the smaller side. As it turns out, this already provides enough ``wiggle room'' to extend the circular simplicial complex to cover all edges but one. 
For this, we need to be able to cut the circular complex in an appropriate way. The generating sequence for which this is possible is provided by the next lemma. 

\begin{lemma}\label{lem: gen sequence, 1 and 2 missing}
    For every $k$ with $k\ge 4$, there exists a generating sequence for $n'=4k+1$ with the following properties:
    \begin{itemize}
        \item Exactly the residues 1 and 2 are missing.
        \item It is possible to cut the resulting circular simplicial complex such that $\lbrace0,4k-2\rbrace$ is at one end of the simplicial complex.
        \item If $k\ge 5$, it is possible to cut the resulting circular simplicial complex at $\lbrace0,13\rbrace$ such that the edge $\lbrace0,7\rbrace$ is at one end of the simplicial complex. 
    \end{itemize}
\end{lemma}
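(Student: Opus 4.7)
Since a generating sequence of length $m$ covers exactly $2m$ of the $2k$ nonzero residue classes of $\Z/n'\Z$, missing exactly residues $1$ and $2$ forces $m = k - 1$. The plan is to exhibit explicit generating sequences of length $k - 1$, following the same split by the parity of $k$ as in Lemma~\ref{lem:4k+1}.

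The natural starting point is to take the generating sequences of Lemma~\ref{lem:4k+1} (which have length $k$ and cover all residues) and delete the short block responsible for residues $1$ and $2$. In Case~1 of that lemma these residues come from the consecutive black entries ``$2,1$'', and in Case~2 they come from near the beginning of the sequence. I would replace the deleted block by a single new element $e$ chosen so that $e \not\equiv \pm 1, \pm 2 \pmod{n'}$, so that $e$ is not already covered by the remaining sequence, and so that the two new blue sums involving $e$ still cover the two residues that the deleted block used to contribute to the blue sequence. When no single $e$ works, I would split into further subcases along the parity of $k$ with a slightly longer patch. The small values $k \in \{4,5,6,7\}$ are handled by explicit ad-hoc sequences verified by hand.

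For each resulting sequence I would verify the three conditions of Definition~\ref{def: generating sequence}: property~(i) reduces to a short gcd computation as in Lemma~\ref{lem:4k+1}; property~(ii) is trivial since the construction uses at most one turn; and property~(iii) follows from the same modulo-$4$ bookkeeping as in Lemma~\ref{lem:4k+1}, adjusted to exclude residues $1$ and $2$.

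For the cutting properties, observe that in the circular simplicial complex the set of edges of a given residue $r$ (coming from a single black or blue entry of the sequence) forms a Hamilton cycle on $\Z/n'\Z$, because the total shift $\sum a_i$ is coprime to $n'$. Consequently, by rotating the starting label $x_0$, we can place a prescribed translate of any triangle in the sequence at either end of the linearised complex. To realise the second bullet, I would arrange the construction so that residue $3$ appears as a black element $a_{i_0} = \pm 3$; then every edge of residue $3$, including $\{0, 4k-2\}$, is an internal edge of the dual cycle, and cutting appropriately puts it at the chosen end. For the third bullet, I would ensure that residue $13$ (or its appropriate representative when $k$ is small) appears as a black element, and that residue $7$ occurs as an internal or external residue of one of the two triangles adjacent to it; cutting at the residue-$13$ edge then automatically places the corresponding translate of $\{0, 7\}$ at the end. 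The main obstacle is engineering the construction so that residues $3$, $7$, and $13$ occupy the required \emph{adjacent} positions in the sequence while simultaneously missing only $1$ and $2$ and satisfying all of Definition~\ref{def: generating sequence}; it is this coupling that drives the ad-hoc adjustments in the small cases.
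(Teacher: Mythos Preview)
Your overall plan---build explicit length-$(k-1)$ sequences, split by parity of $k$, and verify Definition~\ref{def: generating sequence} by modulo-$4$ bookkeeping and a gcd computation---matches the paper's approach. However, two points need correction.

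First, the ``delete the $1,2$ block from Lemma~\ref{lem:4k+1} and patch with a single $e$'' idea does not actually work, and the paper does not proceed this way. In Case~1 of Lemma~\ref{lem:4k+1} the block $\ldots -9\upper{-7}2\upper{3}1\upper{12}11\ldots$ contributes three blue residues $7,3,12$ besides the black $1,2$; replacing $2,1$ by a single $e$ would have to make $\{e,\,e-9,\,e+11\}$ recover exactly $\{\pm 7,\pm 3,\pm 12\}$, which fails for every $e$. The paper instead writes down entirely new ad~hoc initial segments (with one turn) in each parity case, followed by the same periodic tail $7,6,11,10,15,14,\ldots$; these are engineered from scratch, not obtained by local surgery on the Lemma~\ref{lem:4k+1} sequences.

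Second---and this is the real error---your cutting argument for the third bullet has the role of residue $13$ backwards. In the circular complex one can only ``cut at'' a \emph{blue} edge (an edge lying in a single triangle); black edges lie in two triangles and cannot be removed without destroying goodness. Since the lemma requires cutting at the specific edge $\{0,13\}$, residue $13$ must occur as a blue value $c_i$, not as a black $a_i$. The paper arranges this by placing $7$ and $6$ \emph{consecutively in the black sequence}, so that $c_i=7+6=13$; the triangle on $\{x,x+7,x+13\}$ then has blue edge $\{x,x+13\}$ and black edge $\{x,x+7\}$, and choosing $x=0$ gives the required cut at $\{0,13\}$ with $\{0,7\}$ left at the end. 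Your plan to make $13$ black and $7$ adjacent cannot meet the ``cut at $\{0,13\}$'' requirement. (For the second bullet your idea is fine in principle, though the paper in fact realises residue $3$ as a blue entry adjacent to the cut rather than as a black entry.)
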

The second condition is what we need in this section.
The third condition will only be used later when we handle the case $n \equiv 3 \pmod 4$.
\begin{proof}
    The following table gives generating sequences for small values of $n'$ obeying all conditions:
    
    \begin{table}[H]
\centering
\begin{tabular}{c|c}
   $n'$  & generating sequence\\\hline
    17& $3\upper74\upper{-5}8\upper{-6}$\\
    21&  $4\upper{-10}7\upper{-8}6\upper{-3}(-9)\upper{-5}$\\
    25& $4\upper{-11}10\upper{-8}7\upper{-12}6\upper{-3}(-9)\upper{-5}$\\
    29& $11\upper{-10}8\upper{-4} (-12)\upper{-5}7\upper{13}6\upper93\upper{14}$\\
\end{tabular}
\end{table}
    To verify the second and third conditions of this lemma, note the following: The corresponding circular simplicial complexes of the generating sequences do not have any turns. Hence, they look like the circular simplicial complex in Figure~\ref{fig:n=13}. There, we can see that we can cut the circular simplicial complex at any edge whose residue is in the blue sequence. If we cut the circular simplicial complex at a blue edge, the neighboring black and blue edges form the end of the simplicial complex. Hence, we can always cut in a way that $\lbrace 0,4k-2\rbrace$ is at one end of the simplicial complex. 

    The third condition of the lemma is satisfied if residue 7 is followed by residue 6 in the black sequence. This is the case for sequences above except for $n'=17$. Hence, the third condition is only valid for $k\ge 5$.
    
    For larger $k$, we again distinguish two cases based on the parity of $k$.

    \textit{Case 1.} Let $k=2\ell+1$ be\towriteornottowrite{}{t} odd and $\ell\ge 4$. We will show that \[8\upper{3}-5\upper{4\ell-6}\hat{(4\ell-1)}\upper{4\ell-2}4\upper{-12}-16\upper{-9}7\upper{13}6\upper{17}11\upper{21}10\upper{25}15\upper{29}14\dots(4\ell-10)\upper{-20}(4\ell-5)\upper{4\ell+3}\] 
    is a generating sequence for $n'=8\ell+5$ and fulfills the three conditions of the lemma. The sequence starts with an ad-hoc part but becomes periodic modulo 4 from the element 7 onwards. Starting with 7, the subsequent elements of the black sequence are always created by alternately subtracting $1$ and adding 5.
    
    As before, we check that all residues except for 1 and 2 are covered exactly once: The black sequence covers almost all numbers which are $3$ mod $4$ via $7,11,\dots,4\ell-5$. The residues 3 and $4\ell-1$ appear at the beginning of the sequence. Furthermore, the black sequence covers almost every number which is $2$ mod $4$ via $6,10,\dots,4\ell-10$. The residues $4\ell-6$ and $4\ell-2$ appear at the beginning and $4\ell+2=-(4\ell+3)$ at the end of the sequence. The blue numbers starting with $13,17,\dots,4\ell+1$ cover almost all residues that are $1$ mod 4 and finish with $4\ell+5=-(4\ell), 4\ell+9=-(4\ell-4),\dots,8\ell -15=-20$ covering almost  all residues that are $0$ mod 4. The residues $5$, $9$, $16$, 12, 8, and 4 can be found at the beginning of the sequence. This shows \ref{enum: no residue twice} of Definition \ref{def: generating sequence}. \ref{enum: no consecutive turns} is satisfied because $\abs I = 1$. To check \ref{enum: coprime sum}, i.e.\@ that the sum of the $a_i$ is coprime to $n'=8\ell+5$ we compute the sum
    \begin{align*}8-5+(4\ell-1)+4-16+7+\sum_{i=1}^{\ell-3}(4i+2) + \sum_{i=1}^{\ell-3}(4i+7)&=-3+4\ell+9\ell-27+4\ell^2-20\ell +24\\
    &=4\ell^2-7\ell-6
    \end{align*}
    and \begin{equation*}(4\ell^2-7\ell-6,8\ell+5)=(8\ell^2-14\ell-12,8\ell+5)=(-19\ell-12,8\ell+5)=(5\ell+3,8\ell+5)=1.
    \end{equation*}

    Hence, the sequence is indeed a generating sequence where exactly the residues 1 and 2 are missing.
    
    To verify the second condition of the lemma, we note that if we cut the corresponding circular simplicial complex at an edge with residue $4\ell+3$, an edge with residue $3=-(4k-2)$ will be at one end of the simplicial complex. Hence, there is an edge with residue $4\ell+3$ such that if we cut at that edge, the edge $\lbrace 0,4k-2\rbrace$ will be at one end of the simplicial complex.

    For the third condition of the lemma, we notice that residue 7 is followed by residue 6 in the black sequence and there is no turn after residue 7 in the corresponding circular simplicial complex. This is enough to guarantee that we can cut at $\lbrace 0,13\rbrace$ such that the edge $\lbrace0,7\rbrace$ is at one end of the simplicial complex. 

    This completes the proof for the case that $k$ is odd.

    \textit{Case 2.} Let $k=2\ell$ be even with $\ell\ge 4$. There, we claim that the sequence \[\widehat 4\upper{8}(4\ell-6)\upper{4\ell-2}9\upper{3}-12\upper{5}7\upper{13}6\upper{17}11\upper{21}10\upper{25}15\upper{29}14~\dots~ (4\ell-10)\upper{-16}(4\ell-5)\upper{4\ell-1}\] works. Note how starting with $7,6,11,10,\dots$, the black sequence is the same as in the $k$ being odd case above again alternately subtracting 1 and adding 5. 
    
    We start by checking that it is indeed a generating sequence for $n'=8\ell+1$ that misses only the residues 1 and 2. 
    
    The black sequence covers almost all numbers which are $3$ mod $4$ via $7,11,\dots,4\ell-5$. The residues 3 and $4\ell-1$ appear at the beginning 
    and the end of the sequence respectively. Furthermore, the black sequence covers almost every number which is $2$ mod $4$ via $6,10,\dots,4\ell-10$. The residues $4\ell-6$ and $4\ell-2$ appear at the beginning of the sequence. The blue numbers starting with $13,17,\dots,4\ell-3$ cover almost all residues that are $1$ mod 4 and finish with $4\ell+1=-(4\ell), 4\ell+5=-(4\ell-4),\dots,8\ell -15=-16$ covering almost  all residues that are $0$ mod 4. The residues $5$, $9$, 12, 8, and 4 can be found at the beginning of the sequence. This shows \ref{enum: no residue twice} of Definition \ref{def: generating sequence}. \ref{enum: no consecutive turns} is satisfied because $\abs I = 1$.
    
    Hence, only \ref{enum: coprime sum} of Definition \ref{def: generating sequence} remains to be checked.
    The sequence has the sum
\begin{align*}
    4+(4\ell-6)+9-12+7+\sum_{i=1}^{\ell-3}(4i+2)+\sum_{i=1}^{\ell-3}(4i+7)&=2+4\ell+9\ell-27+4\ell^2-20\ell+24\\
    &=4\ell^2-7\ell-1
\end{align*}
and
\begin{equation*}(4\ell^2-7\ell-1,8\ell+1)=(8\ell^2-14\ell-2,8\ell+1)=(-15\ell-2,8\ell+1)=(\ell,8\ell+1)=1.
\end{equation*}
Therefore, this is a generating sequence for $n'=8\ell+1$ only missing the residues 1 and 2.

 To verify the second condition of the lemma, we note that if we cut the corresponding circular simplicial complex at an edge with residue $5$, an edge with residue $3=-(4k-2)$ will be at one end of the simplicial complex. Hence, there is an edge with residue $5$ such that if we cut at that edge, the edge $\lbrace 0,4k-2\rbrace$ will be at one end of the simplicial complex.

    For the third condition of the lemma, we notice that again residue 7 is followed by residue 6 in the black sequence and there is no turn after residue 7 in the corresponding circular simplicial complex. 

    This completes the proof.
\end{proof}

After going through the circular simplicial complex provided by the previous lemma, we plan to construct an extension that attaches through the edge $\{ 0,4k-2\}$ and covers all the edges of the remaining graph \towriteornottowrite{ on the vertex set $\Z/n'\Z\cup\{a,b,c\}$}{}.

To cover the remaining edges we make use of two simple constructions.
\towriteornottowrite{\begin{definition}
For distinct vertices $u,v_1,\dots,v_{t}$, we call the simplicial complex containing the triples $u,v_i,v_{i+1}$ for $i=1,\dots,t-1$ a \emph{rotation} of the sequence $v_1,\dots,v_t$ around $u$. 
The \labels{} of this simplicial complex are $[u,v_1,v_2,v_3,\dots,v_t]$ and the \layout{} is all $1$'s.

For distinct vertices $u,w, v_1,\dots,v_{t}$, we call the \emph{zig-zag} of the sequence $v_1,\dots,v_t$ around $(u,w)$ the simplicial complex containing the triples $v_iv_{i+1}u$ for $i \not\equiv 2 \pmod 4$ and $v_iv_{i+1}w$ for $i \not\equiv 0 \pmod 4$ with $i=1,\dots,t-1$.
Here, the \labels{} sequence of the simplicial complex is $[u,v_1,v_2,w,v_3,v_4,u,v_5,v_6,w,v_7,v_8,\dots]$ and the \layout{} has all $0$'s.
\end{definition}
Note that the rotation has diameter $t-2$ and the zig-zag has diameter $\lfloor \frac{3t}2-2 \rfloor$ for $t \ge 2$.}{
    For a path $v_1,\dots,v_{t}$, we call the simplicial complex containing the triples $a,v_i,v_{i+1}$ for $i=1,\dots,t-1$ a \emph{rotation} around $a$. This has diameter $t-2$.
The \labels{} of this simplicial complex are $[a,v_1,v_2,v_3,\dots,v_t]$ and the \layout{} is all $1$'s.
Similarly, for a path $v_1,\dots,v_{t}$ we call the \emph{zig-zag} around $(b,c)$ the simplicial complex containing the triples $v_iv_{i+1}b$ for $i \not\equiv 2 \pmod 4$ and $v_iv_{i+1}c$ for $i \not\equiv 0 \pmod 4$ with $i=1,\dots,t-1$, which has diameter $\lfloor 3t/2-2 \rfloor$ for $t \ge 2$.
Here the \labels{} sequence of the simplicial complex is $[b,v_1,v_2,c,v_3,v_4,b,v_5,v_6,c,v_7,v_8,\dots]$ and the \layout{} has all $0$'s.}

Recall that the remaining graph is the union of a $K_{n',3}$, two edge-disjoint cycles of length $n'$ on the \towriteornottowrite{larger}{bigger} side and a triangle on the smaller side. 
In order to cover most of the remaining edges we choose a long path in each of the two cycles of length $n'$, do a rotation around $a$ using one of them, and a zig-zag around $(b,c)$ using the other, so to cover most edges of the $K_{n',3}$. After patching them up again and connecting them to the circular complex, we obtain a simplicial complex that misses only a small constant number of edges.
To circumvent this small error, the eventual construction will be slightly modified.

We demonstrate the main parts of this approach by describing a simplicial complex on $n=16$ vertices which leaves only nine edges uncovered. 
Figure~\ref{fig:n=16} shows a good simplicial complex on $n'=13$ vertices corresponding to a generating sequence missing residues $5$ and $6$ (the skew part). This complex attaches through the edge $\lbrace0,1\rbrace$ to the rotation with the \towriteornottowrite{sequence (depicted in red)}{red path}
\[ 0,6,12,5,11,4,10,3,9,2,8 \]
around $a$ and then continuing with the zig-zag of the \towriteornottowrite{sequence (depicted in blue)}{blue path}
\[ 0,5,10,2,7,12,4,9,1,6,11,2 \]
around $(b,c)$.
Note that the first path only uses residue $r_1=6$ and the second path only uses residue $r_2=5$.

\begin{figure}[H]
    \centering
    \begin{tikzpicture}[scale=0.8]
    \tikzstyle{every node}=[font=\footnotesize]
        \foreach \x in {0,...,5} {
        \domath{\newx}{\x*1.5}
        \domath{\newy}{-\x*0.86603}
        \begin{scope}[shift={(\newx,\newy)}]
        \draw (0,0) -- (1,0) -- (.5,0.86603) -- (0,0) -- (.5,-0.86603) -- (1,0) -- (1.5,-0.86603) -- (2,0) (.5,-0.86603) -- (1.5,-0.86603) (1,0) -- (2,0);
        \end{scope}
        }
        \textbe{0,0.1}{0}
        \textab{0.5,0.7}{1}
        \textab{1,-0.1}{3}
        \textbe{.5,-0.7}{4}
        \begin{scope}[shift={(1.5,-0.86603)}]
        \textbe{0,0.1}{6}
        \textab{0.5,0.7}{7}
        \textab{1,-0.1}{9}
        \textbe{.5,-0.7}{10}
        \end{scope}
        \begin{scope}[shift={(3,-1.73206)}]
        \textbe{0,0.1}{12}
        \textab{0.5,0.7}{0}
        \textab{1,-0.1}{2}
        \textbe{.5,-0.7}{3}
        \end{scope}
        \begin{scope}[shift={(4.5,-2.598)}]
        \textbe{0,0.1}{5}
        \textab{0.5,0.7}{6}
        \textab{1,-0.1}{8}
        \textbe{.5,-0.7}{9}
        \end{scope}
        \begin{scope}[shift={(6,-3.46412)}]
        \textbe{0,0.1}{11}
        \textab{0.5,0.7}{12}
        \textab{1,-0.1}{1}
        \textbe{.5,-0.7}{2}
        \end{scope}
        \begin{scope}[shift={(7.5,-4.33015)}]
        \textbe{0,0.1}{4}
        \textab{0.5,0.7}{5}
        \textab{1,-0.1}{7}
        \textbe{.5,-0.66}{8}
        \end{scope}
        \begin{scope}[shift={(9,-5.19618)}]
        \textbe{0,0.2}{10}
        \textab{0.5,0.7}{11}
        \textab{1,-0.1}{0}
        \draw (0,0) -- (1,0) -- (0.5,.86603);
        \end{scope}
        \draw (0,0) -- (-.5,0.86603) -- (.5,0.86603);
        \begin{scope}[shift={(-.5,0.86603)}]
        \foreach \x in {0,...,9} {
        \domath{\angl}{-\x*25-60}
        \domath{\nangl}{-(\x+1)*25-60}
        \draw[red, thick] (\angl:1) -- (\nangl:1);
        \draw (\nangl:1)-- (0,0);
        }
        \draw[red, thick] (1,0) -- (.5,-0.86603);
        \def\textcol{black}
        \textat{-85:1.2}{6}
        \textat{-110:1.2}{12}
        \textat{-135:1.2}{5}
        \textat{-160:1.2}{11}
        \textat{-185:1.2}{4}
        \textat{-210:1.2}{10}
        \textat{-235:1.2}{3}
        \textat{-260:1.2}{9}
        \textat{-280:1.2}{2}
        \textat{0.3,-0.15}{$a$}
        \textat{0.7,.65}{8}
        \begin{scope}[shift={(50:1)}]
        \draw (-130:1) -- (20:1) -- (0,0) -- (1.44,-.52603);
        \begin{scope}[shift={(20:1)}]
        \foreach \x in {0,...,7} {
        \domath{\y}{\x+1}
        \domath{\z}{\x+.5}
        \domath{\w}{\x+1.5}
        \draw (\x,0) -- (\y,0) -- (\z,-.86603) -- (\x,0) (\z,-.86603) -- (\w,-.86603);
        }
        \draw (8,0) -- (9,0) -- (8.5,-.86603) -- (8,0);
        \draw[blue, thick] (.5,-.86603) -- (1,0) -- (2,0) -- (2.5,-.86603) -- (3.5,-.86603) -- (4,0) -- (5,0) -- (5.5,-.86603) -- (6.5,-.86603) -- (7,0) -- (8,0) -- (8.5,-.86603);
        \def\textcol{black}
        \textat{0,.2}{$b$}
        \textat{0.5,-1.1}{0}
        \textat{1.5,-1.1}{$c$}
        \textat{2.5,-1.1}{2}
        \textat{3.5,-1.1}{7}
        \textat{4.5,-1.1}{$c$}
        \textat{5.5,-1.1}{9}
        \textat{6.5,-1.1}{1}
        \textat{7.5,-1.1}{$c$}
        \textat{8.5,-1.1}{3}
        \textat{1,.2}{5}
        \textat{2.,.2}{10}
        \textat{3.,.2}{$b$}
        \textat{4.,.2}{12}
        \textat{5,.2}{4}
        \textat{6.,.2}{$b$}
        \textat{7.,.2}{6}
        \textat{8.,.2}{11}
        \textat{9.,.2}{$b$}
        \end{scope}
        \end{scope}
        \end{scope}
    \end{tikzpicture}
    \caption{Simplicial complex for $n=16$. Only nine edges $\lbrace 0,7\rbrace$, $\lbrace 1,7\rbrace$, $\lbrace 1,8\rbrace$, $\lbrace 1,10\rbrace$, $\lbrace 3,8\rbrace$, $\lbrace 7,a\rbrace$, $\lbrace8,c\rbrace$, $\lbrace a,c\rbrace$ and $\lbrace b,c\rbrace$ are missing.}
    \label{fig:n=16}
\end{figure}

In the proof below, we describe how to adjust this approach to obtain a simplicial complex with no uncovered edges.  

\begin{proof}[Proof of Theorem~\ref{thm:main, 4k+1} for $n=4k+4$ when $k \ge 4$.]
Let $n'=n-3=4k+1$ and consider the vertex set $(\Z/n'\Z) \cup \{ a,b,c \}$ where $a,b,c\not\in\Z/n'\Z$ are three distinct symbols.

By Lemma \ref{lem: gen sequence, 1 and 2 missing}, there is a generating sequence for $n'$ such that only the residues 1 and 2 are missing and whose corresponding circular simplicial complex $\towriteornottowrite{\C}{C}$ on $n'$ vertices can be cut in such a way that $\lbrace 0,4k-2\rbrace$ is at one end of the complex. By this cutting, one edge will be lost. This edge will be the only edge we will not cover in our final complex. 
Our goal is to add triangles such that every edge that corresponds to the residues 1 or 2 and every edge containing $a$, $b$, or $c$ is used exactly once. Then Proposition~\ref{prop: with turn} implies the final simplicial complex covers all edges except for one.

Figure~\ref{fig:4n+4} shows what these triangles look like. Note that the leftmost triangle of the figure has the edge $\lbrace 0,4k-2\rbrace$ and belongs to $\towriteornottowrite{\C}{C}$. Since we cut $\towriteornottowrite{\C}{C}$ in such a way that this edge appears at one end, we can attach our new triangles there. 

\begin{figure}[H]
    \centering
    \resizebox{\linewidth}{!}{
    \begin{tikzpicture}
    \tikzstyle{every node}=[font=\footnotesize]
        \draw[dashed] (.5,0.86603) -- (0,0) -- (1,0);
        \draw (.5,0.86603) -- (1,0) (.5,0.86603) -- (1.5,0.86603) (1,0) -- (2,0) -- (1.5,0.86603);
        \draw[red, thick] (1.5,0.86603) -- (1,0) (2.5,0.86603) -- (3,0);
        \draw [red, thick, domain=180:360] plot ({2+cos(\x)}, {sin(\x)}); 
        \draw (2,0) -- ({2+cos(360)},{sin(360});
        \draw (2,0) -- (2.5,0.86603) (3,0) -- (4,0) -- (2.5,0.86603) -- (4.5,0.86603) -- (4,0);
        \foreach \x in {0,...,1} {
            \draw ({4.5+\x}, 0.86603) -- ({5+\x}, 0) -- ({4+\x}, 0) ({5+\x}, 0) -- ({5.5+\x}, 0.86603)-- ({4.5+\x}, 0.86603);
        }
        \textat{0.5,1.06603}{$4k-2$}
        \textat{1.5,1.26603}{$4k-1$}
        \textat{.9,-.2}{0}
        \textat{2,.2}{$a$}
        {\def\textcol{red}
        \textat{2,-1.3}{$\substack{1,2,\dots,4k-8,4k-6,4k-7,\\4k-5,4k-3,4k-4,c,4k-2}$}
        }
        \textat{3.1,-.2}{$4k$}
        \textat{2.5,1.06603}{$b$}
        \textat{4.5,1.06603}{2}
        \textat{5.5,1.06603}{$4$}
        \textat{6.5,1.06603}{$b$}
        \textat{8.5,1.06603}{$7$}
        \textat{9.5,1.06603}{$b$}
        \textat{10.5,1.06603}{$1$}
        \textat{11.5,1.06603}{$4k$}
        \textat{4,-.2}{0}
        \textat{5,-.2}{$c$}
        \textat{6,-.2}{6}
        \draw(6,0) -- (6.5,0);
        \textat{9,-.2}{5}
        \textat{10,-.2}{3}
        \textat{11,-.2}{$c$}
        \textat{12.5,0}{$4k-1$}
        \textat{11.,-0.86603}{$4k-3$}
        \textat{12.5,-0.66603}{$b$}
        \textat{13.5,-0.66603}{$4k-6$}
        \textat{14.5,-0.66603}{$c$}
        \textat{12,-1.86603}{$4k-2$}
        \textat{13,-1.86603}{$4k-4$}
        \textat{14,-1.86603}{$4k-5$}
        \textat{15,-1.86603}{$b$}

        \begin{scope}[shift={(4,0)}]
        \foreach \x in {0,...,2} {
            \draw ({4.5+\x}, 0.86603) -- ({5+\x}, 0) ({5+\x}, 0) -- ({5.5+\x}, 0.86603)-- ({4.5+\x}, 0.86603);
        }
        \draw[dotted] (2,0) -- (4.5,0) (2,.86603) -- (4.5,.86603); 
        \draw (4.5,0) -- (8,0);
        \draw (7,0) -- (8,0) -- (7.5, 0.86603) (7,0) -- (7.5, -0.86603) -- (8,0) -- (8.5,-.86603) -- (7.5, -0.86603) -- (8, -1.73206) -- (8.5,-.86603) -- (9, -1.73206) -- (8, -1.73206) (8.5,-.86603) -- (9.5,-.86603) -- (9, -1.73206) -- (10, -1.73206) -- (9.5,-.86603) -- (10.5,-.86603)-- (10, -1.73206)-- (11, -1.73206)--(10.5,-.86603);
        \draw[blue, thick] (0,0) -- (.5, 0.86603) -- (1.5, 0.86603) -- (2,0) -- (2.5,0) (4.5, 0.86603) -- (5,0) -- (6,0) -- (6.5, 0.86603);
        \end{scope}
        {\def\textcol{blue}
        \textat{7.5,-1}{$\substack{8,10,12,\dots,4k-10,4k-8,\\4k-7,4k-9,\dots,11,9}$}}
        \draw[->] (7.5,-.5) -- (7.5,.5);
    \end{tikzpicture}}
    \caption{The construction for $n=4k+4$}
    \label{fig:4n+4}
\end{figure}
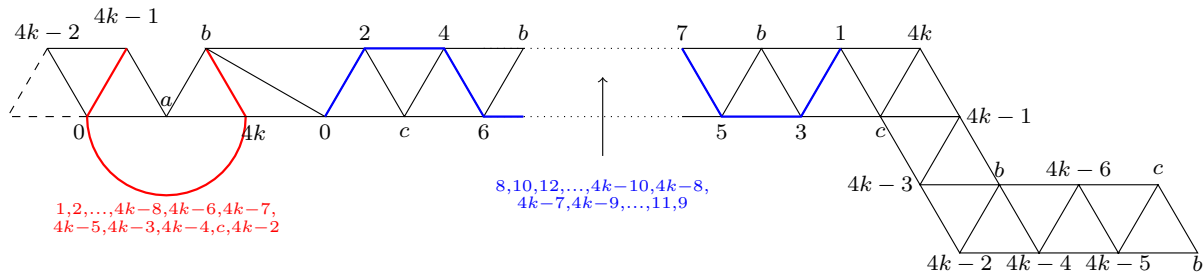

The red circle segment indicates the rotation of the \towriteornottowrite{sequence}{path}
\[4k-1,0,1,2,3,4,\dots,4k-8,4k-6,4k-7,4k-5,4k-3,4k-4,c,4k-2,4k,b\]
around $a$ where the dots indicate that we always increase by 1 until we reach $4k-8$. The rotation is attached to $\towriteornottowrite{\C}{C}$ with the triple $4k-2,0,4k-1$.
Note that most of this is contained in the Hamilton cycle given by the residue $1$, but that there are some changes and, in particular, $b$ and $c$ are also there.

After this rotation around $a$, we want to continue with the zig-zag around $(b,c)$.
For this, we will use the \towriteornottowrite{sequence}{path}
\[ 0,2,4,6,\dots, 4k-8, 4k-7, 4k-9, \dots,1 \]
where the first dots indicate that we always increase by 2 until we reach $4k-8$ and the second dots indicate that we always decrease by 2 until we reach 1. The zig-zag is connected to the rest using the triple $b,4k,0$.
The remaining edges can be covered using the constant-sized ad-hoc construction continuing after the triple $3,1,c$, see Figure~\ref{fig:4n+4}.

It remains to check that all edges with residue $1$ or $2$ as well as 
all edges containing $a$, $b$, or $c$ are present exactly once. There are $2n'$ edges with residue 1 and 2 and $3n'+3$ edges containing $a$, $b$ and $c$. Hence, in total, there are $5n'+3=20k+8$ edges.  The check can be done by hand by first checking that the number of edges added by the new triangles is indeed $20k+8$ and then checking that each such edge appears in the figure at least once. We do this in full detail in Appendix~\ref{sec: Check for 4n+4}.

We can conclude that the simplicial complex is good and covers $\binom{n}{2}-1$ edges. By Observation~\ref{obs:sequences}, the diameter of the simplicial complex is $\frac{1}{2}\binom{n}{2}-2$.
\end{proof}

\subsection{The Construction for $n=4k+3$}
If $n=4k+3$, the situation is a bit more complicated, but we will proceed similarly as in the previous section.
Now, $\binom{n}{2}$ is odd\towriteornottowrite{,}{} which means that in an optimal simplicial complex, each edge must be used exactly once and we do not have an edge to spare. 

As shown in Figure~\ref{fig:n=15} for $n=15$ the idea is to let $n'=n-2=4k+1$ and to consider the vertex set $\Z/n'\Z \cup \{ a,b\}$ where $a,b\not\in\Z/n'\Z$ are distinct symbols. Now, we find a generating sequence for $n'$ missing two residues $r_1$ and $r_2$ that are coprime to $n'$.
This defines a circular simplicial complex, which is then cut and we use a rotation around $a$ and $b$ with \towriteornottowrite{sequences}{paths} given by $r_1$ and $r_2$.
This easily gives a simplicial complex with diameter a small constant below the optimum.

\begin{figure}[H]
    \centering
    \begin{tikzpicture}[scale=0.8]
    \tikzstyle{every node}=[font=\footnotesize]
        \foreach \x in {0,...,5} {
        \domath{\newx}{\x*1.5}
        \domath{\newy}{-\x*0.86603}
        \begin{scope}[shift={(\newx,\newy)}]
        \draw (0,0) -- (1,0) -- (.5,0.86603) -- (0,0) -- (.5,-0.86603) -- (1,0) -- (1.5,-0.86603) -- (2,0) (.5,-0.86603) -- (1.5,-0.86603) (1,0) -- (2,0);
        \end{scope}
        }
        \textbe{0,0.1}{0}
        \textab{0.5,0.7}{1}
        \textab{1,-0.1}{3}
        \textbe{.5,-0.7}{4}
        \begin{scope}[shift={(1.5,-0.86603)}]
        \textbe{0,0.1}{6}
        \textab{0.5,0.7}{7}
        \textab{1,-0.1}{9}
        \textbe{.5,-0.7}{10}
        \end{scope}
        \begin{scope}[shift={(3,-1.73206)}]
        \textbe{0,0.1}{12}
        \textab{0.5,0.7}{0}
        \textab{1,-0.1}{2}
        \textbe{.5,-0.7}{3}
        \end{scope}
        \begin{scope}[shift={(4.5,-2.598)}]
        \textbe{0,0.1}{5}
        \textab{0.5,0.7}{6}
        \textab{1,-0.1}{8}
        \textbe{.5,-0.7}{9}
        \end{scope}
        \begin{scope}[shift={(6,-3.46412)}]
        \textbe{0,0.1}{11}
        \textab{0.5,0.7}{12}
        \textab{1,-0.1}{1}
        \textbe{.5,-0.7}{2}
        \end{scope}
        \begin{scope}[shift={(7.5,-4.33015)}]
        \textbe{0,0.1}{4}
        \textab{0.5,0.7}{5}
        \textab{1,-0.1}{7}
        \textbe{.5,-0.66}{8}
        \end{scope}
        \begin{scope}[shift={(9,-5.19618)}]
        \textbe{0,0.2}{10}
        \textab{0.5,0.7}{11}
        \textab{1,-0.1}{0}
        \draw (0,0) -- (1,0) -- (0.5,.86603);
        \end{scope}
        \draw (0,0) -- (-.5,0.86603) -- (.5,0.86603);
        \draw[thick, red] (0,0) -- (.5,0.86603);
        \begin{scope}[shift={(-.5,0.86603)}]
        \foreach \x in {0,...,9} {
        \domath{\angl}{-\x*25-60}
        \domath{\nangl}{-(\x+1)*25-60}
        \draw[red, thick] (\angl:1) -- (\nangl:1);
        \draw (\nangl:1) -- (0,0);
        }
        \def\textcol{black}
        \textat{-85:1.2}{6}
        \textat{-110:1.2}{12}
        \textat{-135:1.2}{5}
        \textat{-160:1.2}{11}
        \textat{-185:1.2}{4}
        \textat{-210:1.2}{10}
        \textat{-235:1.2}{3}
        \textat{-260:1.2}{9}
        \textat{-280:1.2}{2}
        \textat{0.3,-0.15}{$a$}
        \textat{0.7,.65}{8}
        \begin{scope}[shift={(50:1)}]
        \draw (-130:1) -- (20:1) -- (0,0);
        \begin{scope}[shift={(20:1)}]
        \foreach \x in {0,...,11} {
        \domath{\angl}{-\x*25+200}
        \domath{\nangl}{-(\x+1)*25+200}
        \draw[thick, orange] (\angl:1) -- (\nangl:1);
        \draw (\nangl:1) -- (0,0);
        }
        \def\textcol{black}
        \textat{-190:1.1}{0}
        \textat{-210:1.2}{5}
        \textat{-235:1.2}{10}
        \textat{-260:1.2}{2}
        \textat{-285:1.2}{7}
        \textat{-310:1.2}{12}
        \textat{-335:1.2}{4}
        \textat{-360:1.2}{9}
        \textat{-385:1.2}{1}
        \textat{-410:1.2}{6}
        \textat{-435:1.2}{11}
        \textat{-460:1.2}{3}
        \textat{-0.3,-0.45}{$b$}
        \end{scope}
        \end{scope}
        \end{scope}
    \end{tikzpicture}
    \caption{Simplicial complex for $n=15$. Six edges $\lbrace 0,7\rbrace,\lbrace 1,7\rbrace,\lbrace 1,8\rbrace,\lbrace 1,10\rbrace,\lbrace 3,8\rbrace$ and $\lbrace 7,a\rbrace$ are missing.}
    \label{fig:n=15}
\end{figure}

Since in the end, we want to use up every edge, we also have to use the edge that gets destroyed in the cutting of the simplicial complex somewhere else. Therefore, we have to be more careful where we insert the cut.

\begin{proof}[Proof of Theorem~\ref{thm:main, 4k+1} for $n=4k+3$ when $k \ge 5$.] Let $n'=n-2$ and consider the vertex set $(\Z/n'\Z)\cup \lbrace a,b\rbrace$  where $a,b\not\in\Z/n'\Z$ are two distinct symbols.

By Lemma \ref{lem: gen sequence, 1 and 2 missing}, there is a generating sequence for $n'$ such that only the residues 1 and 2 are missing and whose corresponding circular simplicial complex $\towriteornottowrite{\C}{C}$ on $n'$ vertices can be cut in such a way that the edge $\lbrace0,13\rbrace$ is destroyed and the edge $\lbrace 0,7\rbrace$ is at one end of the complex. Proposition~\ref{prop: with turn} shows that this simplicial complex is good and covers all edges in $\binom{\Z/n'\Z}{2}$ except for those whose residue is 1 or 2 and the edge $\lbrace0,13\rbrace$. Furthermore, all edges incident to $a$ or $b$ remain uncovered.  

At the edge $\lbrace 0,7\rbrace$, we attach the rotation around $a$ with the \towriteornottowrite{sequence}{path}
\[ 7,0,13,15,17,19,\dots, 4k-1,4k,4k-2,\dots,8,9,11,b \]
where the first dots indicate that we always increase by 2 until we reach $4k-1$ and the second dots indicate that we always decrease by 2 until we reach 8.
Then we have an ad-hoc part of constant size as shown in Figure \ref{fig:n=4k+3}. Finally, we attach the rotation around $b$ with the \towriteornottowrite{sequence}{path}
\[ 4k,0,4k-1,4k-2,4k-3,4k-4,\dots,12, \] where the dots indicate that we always decrease by 1 until we reach 12,
and finish with the triangle $11,12,13$.

\begin{figure}[H]
    \centering
    \begin{tikzpicture}[scale=0.8, rotate=240]
    \tikzstyle{every node}=[font=\footnotesize]
    \draw[dashed] (.5,0.86603) -- (0,0) -- (1,0);
    \draw (.5,0.86603) -- (1.5,0.86603) -- (1,0) (2,0) -- (1.5,0.86603) (2,0) -- (2.5,0.86603) -- (1.5,0.86603) -- (2,1.73306) -- (1,1.73306) -- (1.5,0.86603) (2,1.73306) -- (1.5,2.59809) -- (1,1.73306) -- (1,2.73306) -- (1.5,2.59809) (1,2.73306) -- (.5,2.59809) -- (1,1.73306) -- (0,1.73306) -- (.5,2.59809) -- (-.5,2.59809) -- (0,1.73306)  (.5,2.59809) -- (0,3.46412) -- (-.5,2.59809) -- (-.5,3.59809) -- (0,3.46412) (-.5,2.59809) -- (-1,3.46412) -- (-.5,3.59809) (-.5,2.59809) -- (-1.5,2.59809) -- (-1,3.46412) -- (-2,3.46412) -- (-1.5,2.59809) (-2,3.46412) -- (-1.5,4.33015) -- (-1,3.46412) -- (-.5,4.33015) -- (-1.5,4.33015) -- (-1,5.19618) -- (-.5,4.33015) (-1,5.19618) -- (-2,5.19618) -- (-1.5,4.33015) -- (-2.5,4.33015) -- (-2,5.19618) -- (-3,5.19618) -- (-2.5,4.33015) (-3,5.19618) -- (-2.5,6.06221) -- (-2,5.19618) -- (-1.5,6.06221) -- (-2.5,6.06221) (-1.5,6.06221) -- (-2,6.92824) -- (-2.5,6.06221) -- (-3,6.92824) -- (-2,6.92824) -- (-2.5,7.79427) -- (-3,6.92824) (-1.5,7.79427) -- (-2,6.92824) -- (-1,6.92824) -- (-1.5,7.79427) -- (-.5,7.79427) -- (-1,6.92824);
    \draw[red, thick] (.5,0.86603) -- (1,0) -- (2,0) -- (2.5,0.86603) (2,1.73306) -- (1,1.73306);
    \draw[orange, thick] (-1,6.92824) -- (-1.5,7.79427) (-2.5,7.79427) -- (-3,6.92824) -- (-2.5,6.06221) -- (-1.5,6.06221);
    \draw [red, thick, domain=0:60] plot ({1.5+cos(\x)}, {0.86603+sin(\x)});
    \draw [orange, thick, domain=120:60] plot ({-2+cos(\x)}, {6.92824+sin(\x)});
    \textat{1,-.2}{0}
    \textat{.5,1.06603}{7}
    \textat{2,-.2}{13}
    \textat{1.5,1.16603}{$a$}
    \textat{2.7,.86603}{15}
    \textat{2.2,1.93206}{11}
    \textat{1.7,2.79809}{10}
    \textat{1,3}{9}
    \textat{.6,2.79809}{7}
    \textat{.9,1.5}{$b$}
    \textat{0,1.5}{5}
    \textat{-.65,2.4}{6}
    \textat{0.1,3.68}{8}
    \textat{-.55,3.75}{$b$}
    \textat{-1.5,2.4}{$a$}
    \textat{-1.,3.1}{4}
    \textat{-2.2,3.45}{2}
    \textat{-1.5,4}{3}
    \textat{-.35,4.3}{5}
    \textat{-1.,5.4}{$a$}
    \textat{-2.,5.5}{1}
    \textat{-2.7,4.2}{$b$}
    \textat{-3.2,5.1}{2}
    \textat{-2.7,6.06}{0}
    \textat{-1.3,6.06}{$4k$}
    \textat{-3.4,7.}{$4k-1$}
    \textat{-2.,7.3}{$b$}
    \textat{-.8,6.7}{12}
    \textat{-.3,7.85}{11}
    \textat{-1.5,8.1}{13}
    \textat{-3.,8.1}{$4k-2$}
    \def\textcol{red}
    \textat{3,1.5}{$\substack{17,19,\dots, 4k-1,\\4k,4k-2,\dots,10,8,9}$}
    \def\textcol{orange}
    \textat{-3,9.5}{$4k-3,4k-4,\dots, 14$}
    \end{tikzpicture}
    \caption{The construction for $n=4k+3$}
    \label{fig:n=4k+3}
\end{figure}
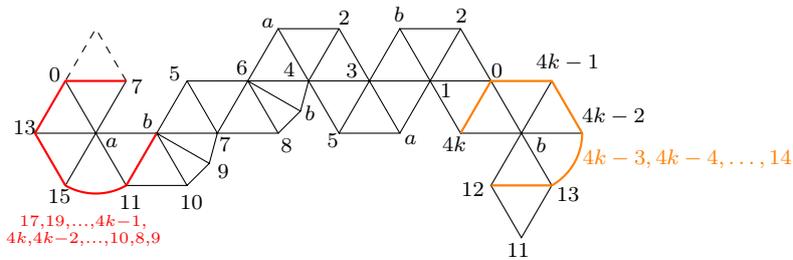
 
One must check that all edges corresponding to residues 1 or 2 as well as all edges incident to $a$ or $b$ and the edge $\lbrace0,13\rbrace$ are covered exactly once with these new triangles. In total, these are $2n'+(2n'+1)+1=4n'+2=16k+6$ edges. One can complete this check by hand by first confirming that the new triangles added exactly $16k+6$ edges and then checking that each of the edges above is covered by the new triangles at least once. We 
do this in full detail in Appendix~\ref{sec: Check for 4n+3}.

We can conclude that the simplicial complex is good and covers all $\binom n2$  edges. By  Observation~\ref{obs:sequences}, the diameter of the simplicial complex is $\tfrac{1}{2}\binom{n}{2}-\tfrac 32$.
\end{proof}

\subsection{The Construction for $n=4k+6$}\label{sec:4k+2}
The case $n=4k+6$ is the most complicated one.
Again, we have that $\binom{n}{2}$ is odd, whence we have to use every edge exactly once to get an optimal simplicial complex.
We find an optimal simplicial complex by combining the ideas of the previous two sections.
We let $n'=4k+1$ and consider the vertex set $\Z/n'\Z \cup \{ a,b,c,d,e \}$ where $a,b,c,d\not\in\Z/n'\Z$ are distinct symbols.
Theoretically, we could just remove one vertex, but we could not find suitable generating sequences for $4k+3$ with one missing residue.
Instead, we find a generating sequence for $n'=4k+1$ which misses the residues $1$, $2$, $4$, and $8$.
This defines a circular simplicial complex which we cut at a certain position.
Contrary to the previous sections, we now add triangles on \emph{both} ends of the simplicial complex.
On one end, we use the residues $1$ and $2$ to cover the edges incident to $a$ or $b$ with rotations, quite similar to the case $n=4k+3$.
On the other end, we use the residues $4$ and $8$ to cover the edges incident to $c$, $d$, and $e$ which is quite similar to the case $4k+4$, but with some additional complications.

To be able to do it, we need a generating sequence that not only misses the residues 1, 2, 4, and 8 but also can be cut in such a way that we have control over what edge we cut and which edge appears at both ends of the simplicial complex. The next lemma shows that such a generating sequence exists.

\begin{lemma}\label{lem: gen sequence, 1,2,4,8 missing}
    For every $k$ with $k\ge 7$, there exists a generating sequence for $n'=4k+1$ with the following properties:
    \begin{itemize}
        \item Exactly the residues 1, 2, 4, and 8 are missing.
        \item It is possible to cut the corresponding circular simplicial complex at $\lbrace 0,17 \rbrace$ such that the edge $\lbrace 6, 17\rbrace$ is at one end of the simplicial complex and $\lbrace0,6\rbrace$ is at the other end.
    \end{itemize}
\end{lemma}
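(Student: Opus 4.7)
The plan is to follow the same architecture as the proof of Lemma~\ref{lem: gen sequence, 1 and 2 missing}. First, I would settle small values of $k$ (say $7 \le k \le 10$) by exhibiting explicit generating sequences of length $k-2$ in a short table, and verifying the three conditions of Definition~\ref{def: generating sequence} together with the cutting condition directly for each. The length $k-2$ is forced because the $2k$ nonzero residue classes of $\Z/n'\Z$ must be covered except for the four missing ones, and each position of a generating sequence covers two residue classes (one black, one blue).

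For larger $k$, the plan is to split into two cases according to the parity of $k$ and construct a sequence of length $k-2$ in each case, built from a short ad-hoc prefix followed by a long periodic block in which the black sequence is formed by alternately subtracting $1$ and adding $5$, just as in the proof of Lemma~\ref{lem: gen sequence, 1 and 2 missing}. The periodic block causes the black entries to cover almost all residues of class $2$ or $3$ modulo $4$, and the corresponding pairwise sums (the blue sequence) to cover almost all residues of class $0$ or $1$ modulo $4$. The role of the ad-hoc prefix is to shift these four arithmetic progressions by exactly the right amount so that precisely $\{1,2,4,8\}$ are the residues left uncovered, and to place residues $6$ and $11$ in consecutive positions of the black sequence so that the triangle $\{0,6,17\}$ appears in the circular complex, realising the desired cut at the blue edge $\{0,17\}$.

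Once a candidate sequence is written down, verification of the three conditions of Definition~\ref{def: generating sequence} is essentially routine: condition (iii) follows from reading off the four arithmetic progressions modulo $4$ induced by the periodic part together with the residues contributed by the ad-hoc prefix; condition (ii) is trivial as long as $|I| \le 1$, which I would arrange; and condition (i) reduces to a short Euclidean computation comparing the explicit sum $\sum a_i$ (a quadratic polynomial in $k$) with $n' = 4k+1$, analogous to the gcd reductions of the form $(4\ell^2 - c\ell + d,\, 8\ell+\delta) = \dots = 1$ carried out in Lemma~\ref{lem: gen sequence, 1 and 2 missing}.

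The main obstacle will be calibrating the ad-hoc prefix so that three competing constraints hold simultaneously: (a) the missing residue set is exactly $\{1,2,4,8\}$ rather than merely a superset, (b) residues $6$ and $11$ sit in consecutive positions of the black sequence with $6+11=17$ appearing as the corresponding blue entry, and (c) the total sum remains coprime to $n'=4k+1$. For at least one of the two parities I expect this balancing act to require either a single turn (to absorb an extra residue and keep the progression lengths matched) or one or two additional ad-hoc entries beyond the prefix used in Lemma~\ref{lem: gen sequence, 1 and 2 missing}, in the spirit of its even-$k$ branch. Because the prefix has constant length, finitely many cases at the low end of $k$ will not be covered by the construction and will instead be handled by the explicit table mentioned above.
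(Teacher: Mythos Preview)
Your plan matches the paper's approach almost exactly: explicit sequences for the smallest $k$, then a parity split with a periodic block formed by alternately adding $5$ and subtracting $1$, a short ad-hoc prefix to leave exactly $\{1,2,4,8\}$ uncovered, and the placement of $6$ followed by $11$ in the black sequence to guarantee the triangle $\{0,6,17\}$ for the cut. The one point where the paper goes slightly beyond your expectations is the number of turns: in the odd-$k$ branch the paper needs $|I|=2$ (two non-adjacent turns inside the ad-hoc prefix), not $|I|\le 1$, so condition~(ii) is still easy but not entirely vacuous; you should not commit in advance to arranging $|I|\le 1$.
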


\begin{proof}
    For $n'=29$, the sequence \[5\upper{10}\hat{14}\upper{7}3\upper96\upper{12}11\upper{13}\]
    works, whereas for $n'=33$, we can pick the sequence
     \[3\upper{12}9\upper{14}\hat{5}\upper{13}6\upper{16}11\upper{15}7\upper{10}.\]
     Note that the fact that the 11 appears directly after the 6 in the black sequence ensures that in the circular simplicial complex, we have a triangle with the vertices $\lbrace 0,6,17\rbrace$. There, we can cut $\lbrace0,17\rbrace$ to get a simplicial complex required by the second property of the lemma.

     For larger $k$, we again distinguish between $k$ odd and $k$ even. 
     
     \textit{Case 1.} Let $k=2 \ell+1$ be odd with $\ell\ge 4$. We claim that the following sequence works for $n'=8\ell+5$:
     \[7\upper{12}5\upper{4\ell-1}(4\ell-6)\upper{4\ell-2}13\upper{3}\hat{-16}\upper{-9}(-6)\upper{11}\hat{17}\upper{21}10\upper{25}15\upper{29}14\upper{33}19\dots(4\ell-10)\upper{-20}(4\ell-5)\upper{4\ell+2}\]
     Starting with $10,15,14,9,\dots$, the subsequent elements of the black sequence are always created by alternately adding 5 and subtracting 1. For $\ell=4$, this sequence just becomes \[7\upper{12}5\upper{15}10\upper{14}13\upper{3}\hat{-16}\upper{-9}(-6)\upper{11}\hat{17}\upper{18}\]
     which is a generating sequence. 
     
     For $\ell\ge 5$, we first check that all residues except for 1, 2, 4, and 8 are covered by the sequence. The black sequence covers almost all numbers which are $3$ mod $4$ via $15,19,\dots,4\ell-5$. The residues 3, 7, 11, and $4\ell-1$ appear at the beginning of the sequence. Furthermore, the black sequence covers almost every number which is $2$ mod $4$ via $10,14,\dots,4\ell-10$. The residues $6, 4\ell-6$ and $4\ell-2$ appear at the beginning of the sequence whereas $4\ell +2$ appears at the end of the sequence. The blue numbers starting with $21,25,\dots,4\ell+1$ cover almost all residues that are $1$ mod 4 and finish with $4\ell+5=-(4\ell), 4\ell+9=-(4\ell-4),\dots,8\ell -15=-20$ covering almost  all residues that are $0$ mod 4. The residues $5$, $9$, 13, 17, 16, and 12 can be found at the beginning of the sequence. This shows \ref{enum: no residue twice} of Definition \ref{def: generating sequence}. 
     
     Furthermore, \ref{enum: no consecutive turns} of Definition \ref{def: generating sequence}  is satisfied. The sum of the black sequence is 
    \begin{align*}
        7+5+(4\ell-6)+13-16-6+17+\sum_{i=1}^{\ell-4}(4i+6)+\sum_{i=1}^{\ell-4}(4i+11)&=4\ell+14+17\ell-68+4\ell^2-28\ell+48\\
        &=4\ell^2-7\ell-6
    \end{align*}
    and the gcd is
    \begin{equation*}
\left(4\ell^2-7\ell-6,8\ell+5\right)=\left(8\ell^2-14\ell-12,8\ell+5\right)=(-19\ell-12,8\ell+5)=(5\ell+3,8\ell+5)=1.
    \end{equation*}
    Thus, all conditions of Definition \ref{def: generating sequence} are fulfilled and this is indeed a generating sequence.  
    
    Finally, we have to check the second condition of the lemma. Figure~\ref{fig: cut possible} shows a part of the corresponding circular simplicial complex.
    It can be seen that $0,6,17$ is a triangle and that if the edge $\lbrace0,17\rbrace$ is deleted, the simplicial complex has two ends with edge $\lbrace 0,6\rbrace$ at one of them and $\lbrace6,17\rbrace$ at the other one.
    This completes the proof for odd $k$.

        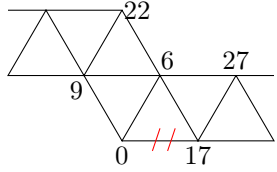
\begin{figure}[H]
        \centering
        \begin{tikzpicture}
        \draw (0,.86603) -- (.5,.86603) -- (0,0) -- (1,0) -- (.5,.86603) -- (1.5,.86603) -- (1,0) -- (2,0) -- (1.5,.86603) (1,0) -- (1.5,-.86603) -- (2,0) -- (2.5,-.86603) -- (1.5,-.86603) (2,0) -- (3,0) -- (2.5,-.86603) -- (3.5,-.86603) -- (3,0) -- (3.5,0);
        \textat{1.7,.86603}{22}
        \textat{2.1,.2}{6}
        \textat{1.5,-1.06603}{0}
        \textat{2.5,-1.06603}{17}
        \textat{3,0.2}{27}
        \textat{.9,-.2}{9}
        \draw[red] (1.9,-1) -- (2,-0.7) (2.1,-1) -- (2.2,-0.7);
        \end{tikzpicture}
        \caption{circular simplicial complex corresponding to the generating sequence for odd $k$. It can be cut as the lemma requires.}
        \label{fig: cut possible}
    \end{figure}

\textit{Case 2.} Let $k=2 \ell$ be even with $\ell\ge5$. We claim that the following sequence works for $n'=8\ell+1$:
    \[(-4\ell+1)\upper{4\ell-6}\widehat 5\upper{12}(-4\ell+5)\upper{4\ell-2}-3\upper{16}-13\upper{7}6\upper{17}11\upper{21}10\upper{25}15\dots (4\ell-14)\upper{-24}(4\ell-9)\upper{-20}(4\ell-10)\upper{-9}\]
    Starting with $6,11,10,15,\dots$, the subsequent elements of the black sequence are always created by alternately adding 5 and subtracting 1.
    
    We check that it covers all residues except for 1, 2, 4, and 8. The black sequence covers almost all numbers which are $3$ mod $4$ via $11,15,\dots,4\ell-9$. The residues 3, 7, $4\ell-5$ and $4\ell-1$ appear at the beginning of the sequence. Furthermore, the black sequence covers almost every number which is $2$ mod $4$ via $6,10,\dots,4\ell-10$. The residues $4\ell-6$ and $4\ell-2$ appear at the beginning of the sequence. The blue numbers starting with $17,21,\dots,4\ell-3$ cover almost all residues that are $1$ mod 4 and finish with $4\ell+1=-(4\ell), 4\ell+5=-(4\ell-4),\dots,8\ell -19=-20$ covering almost  all residues that are $0$ mod 4. The residues $5$, 13, 16, and 12 can be found at the beginning of the sequence whereas 9 can be found at the end of the sequence. This shows \ref{enum: no residue twice} of Definition \ref{def: generating sequence}. \ref{enum: no consecutive turns} is satisfied because $\abs I = 1$.
    
    The black sequence has the sum
\begin{align*}
    (-4\ell+1)+5+(-4\ell+5)-3-13+6+\sum_{i=1}^{\ell-4}(4i+7)+\sum_{i=1}^{\ell-4}(4i+6)&=1-8\ell+13\ell-52+4\ell^2-28\ell+48\\
    &=4\ell^2-23\ell-3
\end{align*}
and the gcd is
\begin{equation*}\left(4\ell^2-23\ell-3,8\ell+1\right)=\left(8\ell^2-46\ell-6,8\ell+1\right)=(-47\ell-6,8\ell+1)=(\ell, 8\ell+1)=1.
\end{equation*}
Hence, it is a generating sequence.

Furthermore, the second condition of the lemma is fulfilled because the 11 appears directly after 6 in the sequence.
\end{proof}
\towriteornottowrite{
\begin{definition}
    For a $k\ge 7$, let $\C_k$ be the simplicial complex that we get after cutting the circular simplicial complex corresponding to the generating sequence given by Lemma~\ref{lem: gen sequence, 1,2,4,8 missing} at the edge $\lbrace0,17\rbrace$.
\end{definition}
Note that $\C_k$ covers all edges on the vertex set $\Z/n'\Z\cup\{a,b,c,d,e\}$ except the following:}{
Let $C$ be the simplicial complex that we get after we cut the corresponding circular simplicial complex of the generating sequence of the previous lemma at the edge $\lbrace0,17\rbrace$. We still have to cover the following edges:}
\begin{itemize}
    \item The edges corresponding to the residues 1, 2, 4 or 8.
    \item The edges incident to $a$, $b$, $c$, $d$ or $e$.
    \item The edge $\lbrace0,17\rbrace$.
\end{itemize}
\towriteornottowrite{In the next lemma, we will attach triangles to $\C_k$ to cover roughly half of these edges.}{}
\begin{lemma}\label{lem: n=4k+2 first side}\towriteornottowrite{For every $k\ge 7$ with $n'\coleq 4k+1$, i}{I}t is possible to attach triangles to $\towriteornottowrite{\C_k}{C}$ at the edge $\lbrace6,17\rbrace$ such that exactly the following edges are covered:
\begin{itemize}
    \item The edges corresponding to the residues 1 and 2.
    \item The edges incident to $a$ or $b$ except for the ones that are also incident to $c$, $d$ or $e$.
    \item The edge $\lbrace0,17\rbrace$.
\end{itemize}
\end{lemma}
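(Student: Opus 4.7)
The plan is to attach triangles at $\{6,17\}$ in three phases that mimic the $n=4k+3$ construction of the previous subsection, with modifications dictated by the new attaching edge and by the additional requirement of covering the edge $\{0,17\}$. First, I would place two initial triples: $\{6,17,a\}$ (sharing the edge $\{6,17\}$ with the last triple of $\C_k$) and then $\{17,0,a\}$; the latter simultaneously covers $\{0,17\}$, $\{0,a\}$, and uses $\{a,17\}$ a second, consecutive time. Second, I would perform a rotation around $a$ whose vertex path is a suitably chosen Hamilton path of the residue-$2$ cycle on $\Z/n'\Z$ starting at $0$; this covers every residue-$2$ edge as well as every edge $\{a,x\}$ with $x\in\Z/n'\Z$. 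Third, a constant-size ad-hoc connector transitions into a rotation around $b$ whose vertex path is a Hamilton path of the residue-$1$ cycle on $\Z/n'\Z$, covering every residue-$1$ edge, every edge $\{b,x\}$ with $x\in\Z/n'\Z$, and, inside the connector, the edge $\{a,b\}$.

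The main obstacle is that both $6$ and $17$ are already adjacent to $a$ through the two initial triples, so neither may appear as an interior vertex of the rotation around $a$ without using the edge $\{a,6\}$ or $\{a,17\}$ a non-consecutive third time, which would destroy goodness. I would resolve this by opening the residue-$2$ cycle of $\Z/n'\Z$ at an appropriate edge so that the Hamilton path has $6$ at one endpoint and does not have $17$ as an interior vertex (or omits both $6$ and $17$ altogether), while covering the few residue-$2$ edges incident to $6$ and $17$ that thereby become uncovered via extra triples in the connector gadget. Similar care is needed on the $b$-side to avoid any double-use of an edge $\{b,x\}$ at the transition. As in the $n=4k+3$ case, for all sufficiently large $k$ the construction becomes periodic, and a figure analogous to Figure~\ref{fig:n=4k+3} can then give an explicit description; the remaining small values of $k\geq 7$ can be handled by hand.

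Correctness will follow from the standard counting argument. The total number of new edges to be covered is
\[ 2n' + (2n'+1) + 1 = 4n'+2 = 16k+6,\]
accounting for the $2n'$ edges of residue $1$ or $2$, the $2n'+1$ edges incident to $a$ or $b$ but not to $c,d,e$, and the single edge $\{0,17\}$. I would then verify, in the style of the check in Appendix~\ref{sec: Check for 4n+3}, that the triples produced by the three phases above introduce exactly $16k+6$ new edges and that each edge on the required list appears at least once; these two facts together force each required edge to be covered exactly once and no extraneous edge to be covered at all, as the lemma demands.
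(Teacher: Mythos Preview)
Your outline is essentially the paper's approach: two rotations, one around $a$ carrying most residue-$2$ edges and one around $b$ carrying most residue-$1$ edges, connected and completed by constant-size ad-hoc pieces, with the $16k+6$ count verified at the end. The paper indeed proceeds exactly this way (see Figure~\ref{fig:n=4k+2} and the check in Appendix~\ref{sec: Check for 4n+2 first side}).

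Two points where your sketch diverges from what is actually needed. First, the ``main obstacle'' you describe is self-inflicted: there is no reason to separate the triples $\{6,17,a\}$ and $\{17,0,a\}$ from the $a$-rotation. The paper simply takes $6,17,0$ as the first three entries of the rotation sequence around $a$; then $\{a,6\}$ and $\{a,17\}$ are covered exactly once as part of the rotation and no exclusion of $6$ or $17$ is needed. Your proposed fix --- omitting $6$ and $17$ from a Hamilton path of the residue-$2$ cycle --- does not yield a single path, since removing two non-adjacent vertices from a cycle leaves two arcs; you would then have to reconnect them using non-residue-$2$ edges anyway, which is precisely what the paper does (its $a$-rotation path contains a few residue-$1$ steps and even the vertex $b$). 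Second, you underestimate the ad-hoc work: in the paper's construction the two rotations are much shorter than Hamilton, missing about a dozen vertices each, and the constant-size parts comprise $13$ edges between the rotations and $46$ edges afterward. So the ``connector gadget'' is where most of the verification happens, not a small afterthought.
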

\begin{proof}
Figure~\ref{fig:n=4k+2} gives the construction.

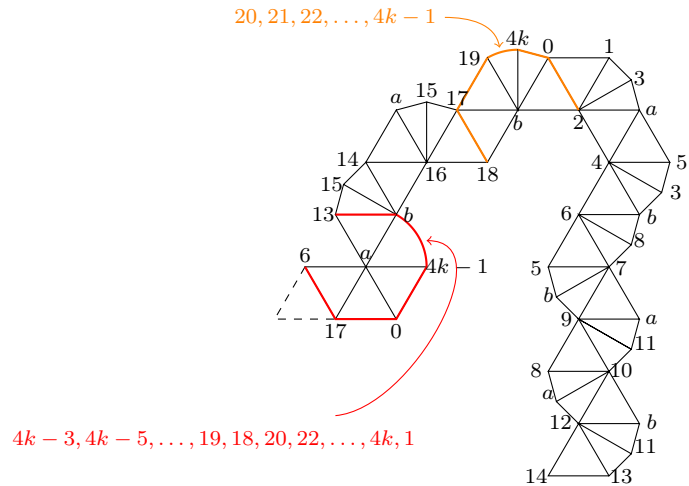
\begin{figure}[H]
    \centering
    \begin{tikzpicture}[scale=0.8]
    \tikzstyle{every node}=[font=\footnotesize]
    \draw[dashed] (.5,0.86603) -- (0,0) -- (1,0);
    \draw (1,0) -- (.5,0.86603) -- (1.5,0.86603) -- (1,0) -- (2,0) -- (1.5,0.86603) -- (2.5,0.86603) -- (2,0);
    \draw [red, thick, domain=0:60] plot ({1.5+cos(\x)}, {0.86603+sin(\x)});
    \draw [orange, thick, domain=90:120] plot ({4+cos(\x)}, {3.46412+sin(\x)});
    \draw (2,1.73206) -- (1.5,0.86603) -- (1,1.73206) -- (2,1.73206) -- (1.5,2.59809) -- (1.13398,2.23206) -- (1,1.73206) (1.13398,2.23206) -- (2,1.73206) -- (2.5,2.59809) -- (1.5,2.59809) -- (2,3.46412) -- (2.5,2.59809) -- (2.5,3.59809) -- (2,3.46412) (2.5,3.59809) -- (3,3.46412) -- (2.5,2.59809) -- (3.5,2.59809) -- (3,3.46412) -- (4,3.46412) -- (3.5,2.59809) (3,3.46412) -- (3.5,4.33015) -- (4,3.46412) -- (4,4.46412) -- (4.5,4.33015) -- (4,3.46412) -- (5,3.46412) -- (4.5,4.33015) -- (5.5,4.33015) -- (5,3.46412) -- (6,3.46412) -- (5.86603,3.96412) -- (5,3.46412) (5.86603,3.96412) -- (5.5,4.33015) (6,3.46412) -- (5.5,2.59809) -- (5,3.46412) (6,3.46412) -- (6.5,2.59809) -- (5.5,2.59809) -- (6.36603,2.09809) -- (6.5,2.59809) (6.36603,2.09809) -- (6,1.73206) -- (5.5,2.59809) -- (5,1.73206) -- (6,1.73206) -- (5.86603,1.23206) -- (5,1.73206) -- (5.5, 0.86603) -- (5.86603,1.23206) (5.5, 0.86603) -- (4.5, 0.86603) -- (5,1.73206) (4.5, 0.86603) -- (4.63397,0.36603) -- (5.5, 0.86603) (4.63397,0.36603) -- (5,0) -- (5.5, 0.86603) -- (6,0) -- (5,0) -- (5.86603, -0.5) -- (6,0) (5.86603, -0.5) -- (5,0) -- (5.5, -0.86603) -- (5.86603, -0.5) (5.5, -0.86603) -- (4.5, -0.86603) -- (5,0) (4.5, -0.86603) -- (4.63397, -1.36603) -- (5.5, -0.86603) (4.63397, -1.36603) -- (5,-1.73206) -- (5.5, -0.86603) -- (6,-1.73206) -- (5,-1.73206) -- (5.86603,-2.23206) -- (6,-1.73206) (5.86603,-2.23206) --  (5.5,-2.59809) -- (5,-1.73206) -- (4.5,-2.59809) -- (5.5,-2.59809);
    \draw [red, thick] (.5,0.86603) -- (1,0) -- (2,0) -- (2.5,.86603) (2,1.73206) -- (1,1.73206);
    \draw [orange, thick] (3.5,2.59809) -- (3,3.46412) -- (3.5,4.33015) (4,4.46412) -- (4.5,4.33015) -- (5,3.46412);
    \textat{.5,1.06603}{6}
    \textat{1,-.2}{17}
    \textat{2,-.2}{0}
    \textat{1.5,1.06603}{$a$}
    \textat{3.,.86603}{$4k-1$}
    \textat{2.2,1.73206}{$b$}
    \textat{0.8,1.73206}{13}
    \textat{0.93398,2.23206}{15}
    \textat{1.2,2.64809}{14}
    \textat{2.65,2.4}{16}
    \textat{2,3.66412}{$a$}
    \textat{2.5,3.8}{15}
    \textat{3,3.66412}{17}
    \textat{3.5,2.4}{18}
    \textat{3.2,4.33015}{19}
    \textat{4,4.66412}{$4k$}
    \textat{4,3.26412}{$b$}
    \textat{4.5,4.53015}{0}
    \textat{5,3.26412}{2}
    \textat{5.5,4.53015}{1}
    \textat{6.,4}{3}
    \textat{6.2,3.46412}{$a$}
    \textat{5.3,2.6}{4}
    \textat{6.7,2.6}{5}
    \textat{6.6,2.1}{3}
    \textat{6.2,1.7}{$b$}
    \textat{6.,1.3}{8}
    \textat{5.7,0.85}{7}
    \textat{4.8,1.75}{6}
    \textat{4.3,0.85}{5}
    \textat{4.5,0.4}{$b$}
    \textat{4.8,0}{9}
    \textat{6.2,0}{$a$}
    \textat{6.1,-0.4}{11}
    \textat{5.7,-.85}{10}
    \textat{4.3,-.85}{8}
    \textat{4.5,-1.25}{$a$}
    \textat{4.7,-1.7}{12}
    \textat{6.2,-1.7}{$b$}
    \textat{6.1,-2.1}{11}
    \textat{5.7,-2.6}{13}
    \textat{4.3,-2.6}{14}
    \def\textcol{red}
    \textat{-1,-2}{$4k-3,4k-5,\dots,19,18,20,22,\dots,4k,1$}
    \def\textcol{orange}
    \textat{1,5}{$20,21,22,\dots,4k-1$}
    \draw [->, red] (1,-1.6) to [out = 5, in = 0] (2.5,1.3);
    \draw [->, orange] (2.8,5) to [out = 0, in = 100] (3.7,4.5);
    \end{tikzpicture}
    \caption{The construction for $n=4k+2$ covering $a$, $b$ and the residues 1 and 2.}
    \label{fig:n=4k+2}
\end{figure}

It contains two rotations\towriteornottowrite{}{ turns}, one around $a$ with the \towriteornottowrite{sequence}{path}
\[ 6,17,0,4k-1,4k-3,4k-5,\dots,19,18,20,22,\dots,4k,1,b,13, \]
where the dots indicate that we always decrease by 2 until we reach 19,
and one around $b$ with the \towriteornottowrite{sequence}{path}
\[ 18,17,19,20,21,22,\dots,4k,0,2, \]
where the dots indicate that we always increase by 1 until we reach $4k$.

Additionally, there are some constant-sized constructions that ensure that exactly the requested edges are covered.

In total, in this lemma, we have to cover $2n'+(2n'+1)+1=4n'+2=16k+6$ edges. To complete the proof of this lemma, one has to check by hand that exactly $16k+6$ edges were added in Figure~\ref{fig:n=4k+2} and that each of the edges listed in the lemma was added at least once.

We do this in Appendix~\ref{sec: Check for 4n+2 first side}.
\end{proof}
\towriteornottowrite{In the next lemma, we will cover the remaining uncovered edges by attaching triangles to the other side of $\C_k$.}{}
\begin{lemma}\label{lem: n=4k+2 second side}
    \towriteornottowrite{For every $k\ge 7$ with $n'\coleq 4k+1$, i}{I}t is possible to attach triangles to $\towriteornottowrite{\C_k}{C}$ at the edge $\lbrace0,6\rbrace$ such that exactly the following edges are covered:
\begin{itemize}
    \item The edges corresponding to the residues 4 and 8.
    \item The edges incident to $c$, $d$, or $e$.
\end{itemize}
\end{lemma}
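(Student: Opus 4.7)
The plan is to produce, in direct analogy with the proof of Lemma~\ref{lem: n=4k+2 first side}, an explicit sequence of triangles (best presented as a figure) attached to $\towriteornottowrite{\C_k}{C}$ at the edge $\{0,6\}$, consisting of four pieces. A constant-sized \emph{prefix} attaches at $\{0,6\}$ and uses up the nine edges among $\{a,b,c,d,e\}$ that remain uncovered after Lemma~\ref{lem: n=4k+2 first side}, namely $\{a,c\},\{a,d\},\{a,e\},\{b,c\},\{b,d\},\{b,e\},\{c,d\},\{c,e\},\{d,e\}$, and leads into the starting edge of the next piece. That next piece is a rotation around $c$ whose path traces (almost) the entire residue-$4$ Hamilton cycle on $\Z/n'\Z$; the rotation covers all $n'$ edges $\{x,c\}$ with $x\in\Z/n'\Z$ together with all but $O(1)$ residue-$4$ edges. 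After a short transition, a zig-zag around $(d,e)$ using (almost) the residue-$8$ Hamilton cycle covers the $2n'$ edges $\{x,d\}$ and $\{x,e\}$ together with all but $O(1)$ residue-$8$ edges. Finally, a constant-sized \emph{suffix} mops up the few residue-$4$ and residue-$8$ edges that the prefix or the long pieces had to skip, and terminates the complex.

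As in the proof of Lemma~\ref{lem: n=4k+2 first side}, once an explicit construction has been drawn the verification reduces to two numerical checks: one confirms that exactly $2n'+3n'+3+6=20k+14$ new edges are produced by the added triangles, and the other verifies that every edge listed in the statement appears at least once among them. Since these two counts coincide, each listed edge is then automatically covered exactly once and no other edge is touched, so the resulting complex is good. The detailed bookkeeping would be deferred to an appendix, entirely parallel to Appendix~\ref{sec: Check for 4n+2 first side}.

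The main obstacle is designing the prefix and suffix. The prefix must simultaneously attach to $\{0,6\}$, use each of the nine uncovered internal edges of $\{a,b,c,d,e\}$ exactly once, and land on a prescribed starting edge of the rotation around $c$. Every ad-hoc triangle that meets $a$ or $b$ is severely constrained: by Lemma~\ref{lem: n=4k+2 first side} every edge from $a$ or $b$ to $\Z/n'\Z$ has already been used, so $a$ and $b$ may only appear in triangles whose three vertices lie entirely in $\{a,b,c,d,e\}$. Choosing the endpoints of both the rotation path and the zig-zag path so that the transitions with the prefix and suffix are simultaneously compatible, while respecting the period-$4$ bookkeeping of the zig-zag and ensuring that no residue-$4$ or residue-$8$ edge is wasted (or covered twice), is where the delicate work lies.
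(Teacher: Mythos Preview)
Your outline follows the paper's approach in spirit: give an explicit figure built from rotations and zig-zags plus constant-size transitions, then verify by counting that exactly $20k+14$ new edges appear and that each required edge is among them. But in this lemma the construction \emph{is} the proof, and you have not supplied one. Two concrete difficulties make your proposed layout unlikely to work as stated. First, the paper needs two genuinely different constructions according to the parity of $k$: for even $k$ it uses two zig-zags around $(c,d)$ and $(d,c)$ followed by a rotation around $e$; for odd $k$ it uses a zig-zag around $(c,d)$, a rotation around $c$, a zig-zag around $(d,e)$, and a rotation around $e$. This split is driven by the residue-$8$ arithmetic in $\Z/(4k+1)\Z$ (the cases $n'=8\ell+1$ versus $n'=8\ell+5$ give different traversal patterns), and your single template ``rotation around $c$ then zig-zag around $(d,e)$'' does not account for it.

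Second, your placement of the nine internal edges $\{a,c\},\dots,\{d,e\}$ in a prefix attached at $\{0,6\}$ is problematic. Since every edge from $a$ or $b$ to $\Z/n'\Z$ is already used, any block of triangles covering those nine edges (for instance $\{b,c,e\}$--$\{b,c,d\}$--$\{a,c,d\}$--$\{a,d,e\}$) necessarily consumes all three of $\{c,d\},\{c,e\},\{d,e\}$ and can only be entered and exited through such edges. If this block sits strictly between $\{0,6\}$ and the rotation around $c$, the rotation will afterwards exhaust every edge $\{c,x\}$ with $x\in\Z/n'\Z$, leaving no edge at all through which to leave the rotation (the only candidates $\{c,d\},\{c,e\}$ are already spent). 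The paper avoids this by placing the nine-edge block either at the free end of the complex (even $k$) or between two zig-zags where its $\{c,d,e\}$-boundary edges serve double duty as the transition (odd $k$). So while your counting framework is correct, the specific arrangement you propose would need to be reorganised along these lines before it can be completed.
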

\begin{proof}
Figure~\ref{fig:n=4k+3 second half} shows the construction. It distinguishes between $k$ being odd and $k$ being even.

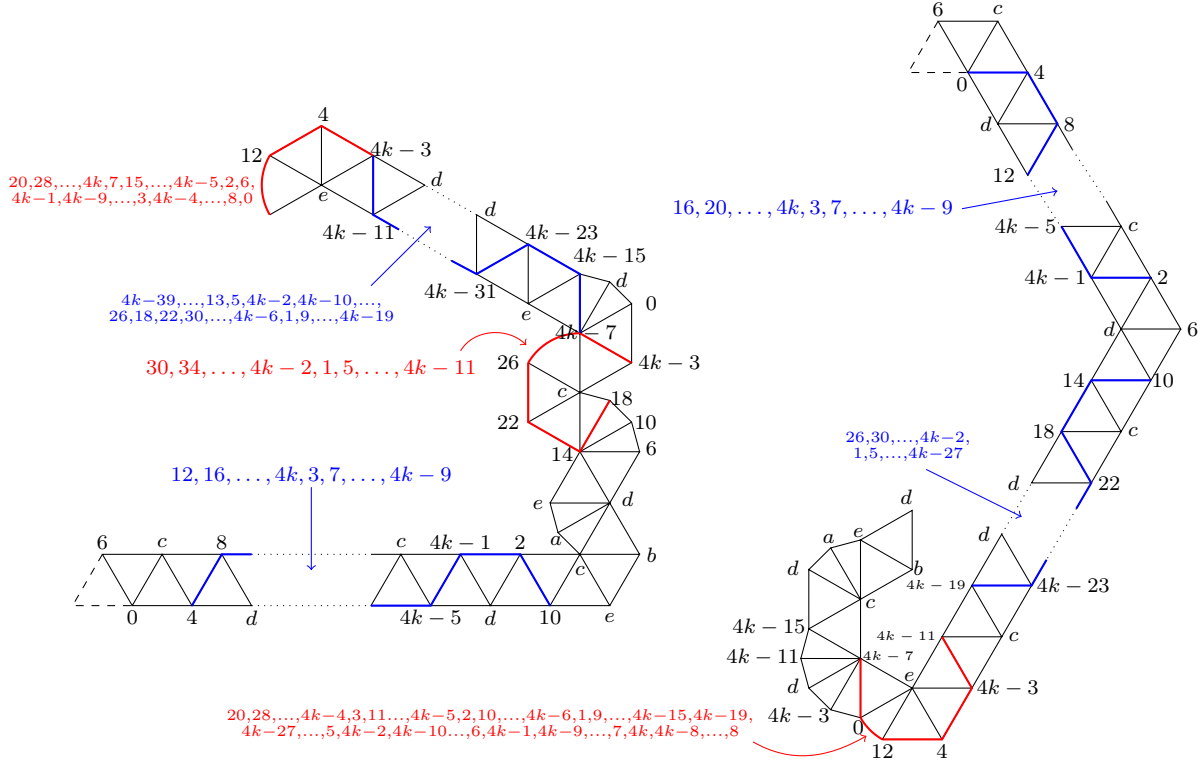
\begin{figure}[H]
    \centering
    \resizebox{\linewidth}{!}{
    \begin{tikzpicture}[scale=0.8]
    \tikzstyle{every node}=[font=\footnotesize]
    \draw (1,0) -- (.5,0.86603) -- (1.5,0.86603) -- (1,0) -- (2,0) -- (1.5,0.86603) -- (2.5,0.86603) -- (2,0) -- (3,0) -- (2.5,0.86603) -- (3,0.86603) (5,0.86603) -- (5.5,0.86603) -- (5,0) -- (6,0) -- (5.5,0.86603) -- (6.5,0.86603) -- (6,0) -- (7,0) -- (6.5,0.86603) -- (7.5,0.86603) -- (7,0) -- (8,0) -- (7.5,0.86603) -- (8.5,0.86603) -- (8,0) -- (9,0) -- (8.5,0.86603) -- (9.5,0.86603) -- (9,0) (9.5,0.86603) -- (9,1.73206) -- (8.5,0.86603) -- (8.13397,1.23206) -- (9,1.73206) (8.13397,1.23206) -- (8,1.73206) -- (9,1.73206) -- (8.5,2.59809) -- (8,1.73206) (8.5,2.59809) -- (9.5,2.59809) -- (9,1.73206) (9.5,2.59809) -- (9.36603,3.09809); 
    \draw[dashed] (.5,0.86603) -- (0,0) -- (1,0);
    \draw[dotted] (3,0) -- (5,0) (3,.86603) -- (5,.86603);
    \draw[blue, thick] (2,0) -- (2.5,0.86603) -- (3,0.86603) (5,0) -- (6,0) -- (6.5,0.86603) -- (7.5,0.86603) -- (8,0);
    
    \begin{scope}[shift = {(-1,0)}]
    \begin{scope}[rotate around={30:(9.5,2.59809)}]
    \draw[] (9.5,2.59809) -- (10.5,2.59809)(10.36603,3.09809) -- (10.5,2.59809) (9.5,2.59809) -- (10.36603,3.09809) -- (10,3.46412) -- (9.5,2.59809) -- (9,3.46412) -- (10,3.46412) -- (9.5,4.33015) -- (9,3.46412) (10,3.46412) -- (10.5,4.33015) -- (11,3.46412) -- (10,3.46412) (10.5,4.33015) -- (11.5,4.33015) -- (11,3.46412) (11.5,4.33015) -- (11.36603,4.83015) -- (10.5,4.33015) -- (11,5.19618) -- (11.36603,4.83015) (11,5.19618) -- (10,5.19618) -- (10.5,4.33015) (11,5.19618) -- (10.5, 6.06221) -- (10,5.19618) -- (9.5, 6.06221) -- (10.5, 6.06221) -- (10, 6.92824) -- (9.5, 6.06221) -- (9.25,6.465) (8.75,7.36125) -- (8.5,7.79427) -- (9.5, 7.79427) -- (9,8.6603) -- (8.5,7.79427) -- (8,8.66029) -- (9,8.66029) -- (8.5,9.52632) -- (8,8.66026) -- (7.5,9.52632) -- (8.5,9.52632) (7,8.66029) -- (8,8.66029);
    \draw[red, thick](10.5,4.33015) -- (11,3.46412) (9.5,4.33015)-- (9,3.46412)-- (9.5,2.59809) -- (10.36603,3.09809);
    
    \draw[blue, thick] (10.5,4.33015) -- (11,5.19618) -- (10.5, 6.06221) -- (9.5, 6.06221) -- (9.25,6.465) (8.75,7.36125) -- (8.5,7.79427) -- (9,8.6603);
    \draw[dotted] (9.25,6.465) -- (8.75,7.36125) (10, 6.92824) -- (9.5, 7.79427);
    \draw[red, thick] (9,8.6603) -- (8.5,9.52632)-- (7.5,9.52632);
    \draw [red, thick, domain=120:180] plot ({8+cos(\x)}, {8.66029+sin(\x)});
    \draw [red, thick, domain=60:120] plot ({10+cos(\x)}, {3.46412+sin(\x)});
    \end{scope}
    \end{scope}

    \textat{0.5,1.066}{6}
    \textat{1.5,1.066}{$c$}
    \textat{2.5,1.066}{8}
    \textat{5.5,1.066}{$c$}
    \textat{6.5,1.066}{$4k-1$}
    \textat{7.5,1.066}{2}
    \textat{1,-0.2}{0}
    \textat{2,-0.2}{4}
    \textat{3,-0.2}{$d$}
    \textat{6,-0.2}{$4k-5$}
    \textat{7,-0.2}{$d$}
    \textat{8,-0.2}{$10$}
    \textat{9,-0.2}{$e$}
    \textat{8.5,.6}{$c$}
    \textat{9.7,0.866}{$b$}
    \textat{9.3,1.8}{$d$}
    \textat{8.1,1.1}{$a$}
    \textat{7.8,1.7}{$e$}
    \textat{8.2,2.55}{14}
    \textat{9.7,2.65}{6}
    \textat{9.6,3.1}{10}
    \textat{9.2,3.5}{18}
    \textat{8.2,3.6}{$c$}
    \textat{7.3,3.1}{22} 
    \textat{7.3,4.1}{26}
    \textat{8.6,4.6}{$4k-7$}
    \textat{10,4.1}{$4k-3$}
    \textat{9.7,5.1}{0}
    \textat{9.2,5.5}{$d$}
    \textat{9,5.9}{$4k-15$}
    \textat{8.2,6.3}{$4k-23$}
    \textat{7.,6.7}{$d$}
    \textat{6.1,7.2}{$d$}
    \textat{5.5,7.7}{$4k-3$}
    \textat{4.8,6.3}{$4k-11$}
    \textat{4.2,6.9}{$e$}
    \textat{4.2,8.3}{4}
    \textat{3.,7.6}{12}
    \textat{7.6,4.9}{$e$}
    \textat{6.5,5.3}{$4k-31$}
    {\def\textcol{blue}
    \textat{4,2.2}{$12,16,\dots,4k,3,7,\dots,4k-9$}
    \textat{3,5}{$\substack{4k-39,\dots,13,5,4k-2,4k-10,\dots,\\  26,18,22,30,\dots,4k-6,1,9,\dots,4k-19}$}
    \draw[->, blue] (4,2) -- (4,0.6);
    \draw[->, blue] (5,5.4) -- (6,6.4);}
    \draw[->, red] (6.5,4.3) to[out=60, in=135] (7.6,4.4);
    {
    \def\textcol{red}
    \textat{4,4}{$30,34,\dots,4k-2,1,5,\dots,4k-11$}
    \textat{1,7}{$\substack{20,28,\dots,4k,7,15,\dots,4k-5,2,6,\\4k-1,4k-9,\dots,3,4k-4,\dots,8,0}$}
    }

    \begin{scope}[shift={(14,9)}]
    \draw[dashed] (.5,0.86603) -- (0,0) -- (1,0);
    \draw (1,0) -- (.5,0.86603) -- (1.5,0.86603) -- (1,0) -- (2,0) -- (1.5,0.86603) (1,0) -- (1.5,-.86603) -- (2,0) -- (2.5,-.86603) -- (1.5,-.86603) -- (2,-1.732) -- (2.5,-.86603) -- (2.75,-1.299);
    \draw[blue, thick] (1,0) -- (2,0) -- (2.5,-.86603) -- (2,-1.732);
    \draw[dotted] (2,-1.732) -- (2.55,-2.5923)(2.75,-1.299) --({-0.433015+3.75},{.86603 -3.032});
    \textat{.9,-0.2}{0}
    \textat{.5,1.06}{6}
    \textat{1.5,1.06}{$c$}
    \textat{2.2,0}{4}
    \textat{1.3,-.866}{$d$}
    \textat{2.7,-.866}{8}
    \textat{1.6,-1.732}{12}
    {\def\textcol{blue}
    \textat{-1.6,-2.3}{$16,20,\dots,4k,3,7,\dots,4k-9$}
    \draw[->, blue](.9,-2.3) -- (2.5,-2);
    \textat{-0,-6.3}{$\substack{26,30,\dots,4k-2,\\1,5,\dots,4k-27}$}
    \draw[->, blue](.3,-6.7) -- (1.9,-7.5);}
    {\def\textcol{red}
    \textat{-7,-11}{$\substack{20,28,\dots,4k-4,3,11\dots,4k-5,2,10,\dots,4k-6,1,9,\dots,4k-15,4k-19,\\4k-27,\dots,5,4k-2,4k-10\dots,6,4k-1,4k-9,\dots,7,4k,4k-8,\dots,8}$}
    \draw[red,->] (-2.6,-11.3) to[out=330, in=210] (-0.7,-11.2);
    }
    \begin{scope}[shift={(-0.433015,.86603)}]
    \draw (3.75,-3.032) -- (4,-3.46412) -- (3,-3.46412) -- (3.5,-4.33015) -- (4,-3.4612) -- (4.5,-4.33015) -- (3.5,-4.33015) -- (4,-5.19618) -- (4.5,-4.33015) -- (5,-5.19618) -- (4,-5.19618) -- (4.5,-6.06221) -- (5,-5.19618) (4,-5.19618) -- (3.5,-6.06221) -- (4.5,-6.06221);
    \draw[blue, thick] (3,-3.46412) -- (3.5,-4.33015) -- (4.5,-4.33015);
    \textat{4.2,-3.46412}{$c$}
    \textat{2.4,-3.46412}{$4k-5$}
    \textat{2.9,-4.33015}{$4k-1$}
    \textat{4.7,-4.33015}{2}
    \textat{5.2,-5.19618}{6}
    \textat{3.8,-5.19618}{$d$}
    \textat{4.7,-6.06221}{10}
    \textat{3.2,-6.06221}{14}
    \begin{scope}[shift = {(4.5,-6.06221)}]
    \draw (0,0) -- (-.5,-.86603) -- (-1,0) -- (-1.5,-.86603) -- (-.5,-.86603) -- (-1,-1.732) -- (-1.5,-.86603) -- (-2,-1.732) -- (-1,-1.732) -- (-1.25,-2.165);
    \draw[blue, thick] (0,0) -- (-1,0) -- (-1.5,-.86603) -- (-1,-1.732) -- (-1.25,-2.165);
    \draw[dotted] (-1.25,-2.165) -- (-1.75,-3.031);
    \textat{-.3,-.86603}{$c$}
    \textat{-1.8,-.86603}{18}
    \textat{-.7,-1.732}{22}
    \textat{-2.3,-1.732}{$d$}
    \begin{scope}[shift = {(-2,-3.46412)}]
        \draw (.25,.433015) -- (0,0) -- (-.5,.86603) -- (-1,0) -- (0,0) -- (-.5,-.86603) -- (-1,0) -- (-1.5,-.86603) -- (-.5,-.86603) -- (-1,-1.732) -- (-1.5,-.86603) -- (-2,-1.732) -- (-1,-1.732) -- (-1.5,-2.598) -- (-2,-1.732) -- (-2.5,-2.598) -- (-1.5,-2.598);
        \draw[blue, thick] (.25,.433015) -- (0,0) -- (-1,0);
        \draw[dotted] (-.5,.86603) -- (0,1.732);
        \draw[red, thick] (-1.5,-.86603) -- (-1,-1.732) -- (-1.5, -2.598) -- (-2.5, -2.598);
    \textat{.7,0}{$4k-23$}
    \textat{-.8,.86603}{$d$}
    \textat{-1.6,0}{\tiny{$4k-19$}}
    \textat{-2.1,-.86603}{\tiny{$4k-11$}}
    \textat{-0.3,-.86603}{$c$}
    \textat{-.4,-1.732}{$4k-3$}
    \textat{-1.5,-2.798}{$4$}
    \textat{-2.5,-2.798}{$12$}
    \textat{-2.05,-1.532}{$e$}
    \textat{-2.9,-2.4}{$0$}
    \textat{-3.9,-2.2}{$4k-3$}
    \textat{-4,-1.7}{$d$}
    \textat{-4.5,-1.2}{$4k-11$}
    \textat{-4.4,-.7}{$4k-15$}
    \textat{-4.,.3}{$d$}
    \textat{-3.4,.8}{$a$}
    \textat{-2.9,.9}{$e$}
    \textat{-2.1,1.5}{$d$}
    \textat{-1.9,.3}{$b$}
    \textat{-2.7,-.3}{$c$}
    \textat{-2.4,-1.2}{\tiny{$4k-7$}}
        \begin{scope}[rotate around={30:(-2,-1.732)}]
        \draw(-2,-1.732) -- (-3,-1.732) -- (-2.5,-.86603) -- (-2,-1.732)(-2.5,-.86603) -- (-3.366,-1.36598) -- (-3,-1.732)(-3.366,-1.36598) -- (-3.5,-.86693) -- (-2.5,-.86693) -- (-3.36597,-0.36603) -- (-3.5,-.86693) (-3.36597,-0.36603) -- (-3,0) -- (-2.5,-.86693) -- (-2, 0) -- (-3,0) -- (-2.5,.86693) -- (-2,0) -- (-1.5,.86693) -- (-2,1) -- (-2,0) (-2,1) -- (-2.5,.86693) (-2,0) -- (-1,0) -- (-1.5,.86693) -- (-.5,.86693) -- (-1,0);
        \draw[thick, red](-3,-1.732) -- (-2.5,-.86603);

    \end{scope}
    \draw [red, thick, domain=210:240] plot ({-2+cos(\x)}, {-1.732+sin(\x)});
    \end{scope}
    \end{scope}
    \end{scope}
    \end{scope}
    \end{tikzpicture}}
    \caption{The construction for $n=4k+2$ covering $c$, $d$ and $e$ together with the residues 4 and 8. The left construction holds for odd $k$ and the right construction for even $k$.}
    \label{fig:n=4k+3 second half}
\end{figure}

We start with even $k$. There we have a zig-zag around $(c,d)$ with \towriteornottowrite{sequence}{path}
\[ 0,4,8,12,16,20,\dots,4k,3,7,\dots,4k-1,2 \]
where the dots indicate that we always increase by 4 until we reach $4k$ resp.\@ $4k1-$.
Then there is a zig-zag around $(d,c)$ with \towriteornottowrite{sequence}{path}
 \[ 10,14,18,22,26,30,\dots,4k-2,1,5,\dots,4k-19 \, , \]
 again with the dots indicating that we always increase by 4 until we reach $4k-2$ resp.\@ $4k-19$.
These two zig-zags could be seen as a single zig-zag, but there is a turn in the middle.
We also have a rotation around $e$  \towriteornottowrite{with the sequence}{along the path}
\begin{align*}
    4k-11,4k-3,4,12,20,28,\dots,4k-4,3,11,\dots,4k-5,2,10,\dots,4k-6,1,9,\dots,4k-15,4k-19,\\
    4k-27,\dots,5,4k-2,4k-10,\dots,6,4k-1,4k-9,\dots,7,4k,4k-8,\dots,8,0,4k-7 \,
\end{align*}
where the dots always indicate that we increase by 8. 
The zig-zags mostly use residue $4$ and the rotation mostly uses residue $8$.

For odd $k$, the complex contains a zig-zag around $(c,d)$ \towriteornottowrite{with the sequence}{along the path}
\[ 4,8,12,16,\dots,4k,3,7,\dots,4k-1,2,10 \, , \]
where the dots always indicate an increase by 4, a rotation around $c$ \towriteornottowrite{with the sequence}{along the path}
 \[ 18,14,22,26,30,34,\dots,4k-2,1,5,\dots,4k-3 \, , \]
 where the dots always indicate an increase of 4,
a zig-zag around $(d,e)$ \towriteornottowrite{with the sequence}{along the path}
\begin{align*}
    4k-7,4k-15,4k-23,4k-31,4k-39,\dots,5,4k-2,4k-10,\dots,18,22,30,\dots,4k-6,1,9,\dots,4k-3 \, ,
\end{align*}
where the dots always indicate an increase resp.\@ decrease by 8,
 and a rotation around $e$ \towriteornottowrite{with the sequence}{along the path}
 \[ 4k-3,4,12,20,28,\dots,4k,7,15,\dots,4k-5,2,6,4k-1,4k-9,\dots,3,4k-4,\dots,8,0 \, ,\]
 where the dots always indicate an increase resp.\@ decrease by 8,
as shown in Figure~\ref{fig:n=4k+3 second half}.
Note that this time the rotations around $c$ and $e$ are shorter as they are both also used in zig-zags together with $d$.
Otherwise, the main thing to note is that the rotation around $c$ and the zig-zag involving $c$ mostly cover the residue $4$, while the rotation around $e$ and the zig-zag involving $e$ mostly cover the residue $8$.

In both cases, the lemma states that we have to cover exactly $2n'+(3n'+3)+6=5n'+9=20k+14$ edges. To finish the proof, we have to check for both constructions that they contain exactly $20k+14$ edges and that each edge listed in the lemma appears in the construction at least once. We do this in Appendix~\ref{sec: Check for 4k+2 second side}.

This completes the proof.
\end{proof}

The last three lemmas together with Proposition~\ref{prop: with turn} prove that for $n=4k+6$ and $k\ge 7$, there is a good simplicial complex that covers all $\binom{n}{2}$ edges. By Observation~\ref{obs:sequences}, this simplicial complex has a diameter of $\tfrac{1}{2}\binom{n}{2}-\tfrac 32$. This proves Theorem~\ref{thm:main, 4k+1} for the case $n=4k+6$ of for $k \ge 7$.

In summary, in Section~\ref{sec:Thm1.1} we determined $H_s(n,2)$ for all $n > 30$ and $n \in \{ 13,17,20,21,23,24,25,27,28,29 \}$.
In Appendix~\ref{sec:appendix}, we describe optimal simplicial complexes for all small $n$ for which this section did not work.
All together, this proves Theorem~\ref{thm:main, 4k+1}.

\section{Proof of Theorem~\ref{thm:packHC2}}
\label{sec:proofHC2}

\begin{lemma}
\label{lem:order_of_2}
\towriteornottowrite{If}{Let} $p\equiv 5\mod8$ is a prime, then $\ord_p(2)$ is divisible by 4.
\end{lemma}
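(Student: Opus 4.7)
The plan is to use the classical connection between the order of an element and quadratic residuacity. Write $p - 1 = 4m$; since $p \equiv 5 \pmod{8}$, we have $p-1 \equiv 4 \pmod{8}$, so the $2$-adic valuation of $p-1$ is exactly $2$ and $m$ is odd. Thus every divisor of $p-1$ has $2$-adic valuation $0$, $1$, or $2$, and we want to rule out the first two possibilities for $d \coleq \ord_p(2)$.

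The key input is the second supplement to quadratic reciprocity: $2$ is a quadratic residue modulo an odd prime $p$ if and only if $p \equiv \pm 1 \pmod{8}$. Since $p \equiv 5 \pmod{8}$, the prime $2$ is a \emph{non}-quadratic residue, i.e.\ $2^{(p-1)/2} \equiv -1 \pmod p$ by Euler's criterion.

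Now translate this into a statement about $d = \ord_p(2)$. Being a quadratic residue is equivalent to $d \mid (p-1)/2$, so $2$ being a non-residue means $d \nmid (p-1)/2 = 2m$. Combined with $d \mid p-1 = 4m$, this forces $v_2(d) > v_2(2m) = 1$, hence $v_2(d) \geq 2$, i.e.\ $4 \mid d$, as required.

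No step should be a serious obstacle — the whole argument is just tracking $2$-adic valuations together with the supplement to quadratic reciprocity. The only thing to be careful about is the equivalence ``$2$ is a QR $\iff d \mid (p-1)/2$'', which follows either from Euler's criterion applied to $2 = g^{(p-1)/d}$ for a primitive root $g$, or directly by noting that $2^{(p-1)/2} = 1$ in $\mathbb{F}_p^\times$ if and only if the order of $2$ divides $(p-1)/2$.
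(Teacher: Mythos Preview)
Your proof is correct and follows essentially the same approach as the paper's: both invoke the second supplement to quadratic reciprocity to obtain $2^{(p-1)/2}\equiv -1\pmod p$, conclude that $\ord_p(2)$ divides $p-1$ but not $(p-1)/2$, and then deduce $4\mid\ord_p(2)$. Your write-up is just a bit more explicit about the $2$-adic valuation bookkeeping, whereas the paper states the divisibility conclusion directly.
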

\begin{proof}
This is a well-known fact. If $p\equiv 5\mod8$, the Second Supplement to Law of Quadratic Reciprocity~\cite[Proposition 5.1.3]{ireland1990classical} implies that \[2^{\frac{p-1}{2}}\equiv -1 \mod p \, .\]
Hence, $\ord_p(2)$ does not divide \towriteornottowrite{the even number $\frac{p-1}{2} =: 2\ell$, yet by Lagrange's Theorem, it does divide $p-1=4\ell$}{$\frac{p-1}2$ but it divides $p-1$}. Thus, \towriteornottowrite{$\ord_p(2)$ must be}{it is} divisible by 4.
\end{proof}

Note that Lemma~\ref{lem:order_of_2} is not a characterization of those primes for which $\ord_p(2)$ is divisible by four and there are more, e.g.~17.
We are not aware of any other easy criteria that guarantee this, let alone a full characterization.

The idea in the proof of Theorem~\ref{thm:packHC2} is that the pairs $(x,2x)$ form cycles divisible by four, i.e.~we can find a matching which then forms our square of Hamilton cycles.

\begin{proof}[Proof of Theorem~\ref{thm:packHC2}]
We show that there are $\ell=\tfrac{p-1}{4}$ elements $a_1, \dots,a_\ell\in\F_p^\ast$ such that the squares of Hamilton cycles which are given by the vertex ordering \[\big(0,a_i,2a_i,3a_i,\dots, (p-1)a_i\big)\] for $i \in [\ell]$ are pairwise edge-disjoint and, therefore, partition the edges of $K_p$.

Consider the normal (multiplicative) subgroup $H=\spa 2\subs \mathbb F_p^\ast$. It partitions $\F_p^\ast$ into left cosets $a\cdot H$ with $a\in\F_p^\ast$ and each coset has a size of $\abs{a\cdot H}=\abs{H}=\ord_p(2)$ which is divisible by 4. Notice that $2^{\ord_p(2)/2}=-1$ \towriteornottowrite{in $\mathbb F_p$ because $2^{\ord_p(2)/2}$ is a root of $X^2-1$, but it cannot be 1 by the minimality of $\ord_p(2)$}{}. Thus, for $a\in \F_p^\ast$ and $k\in \lbrace 0,\dots, \ord_p(2)/2-1\rbrace$, we get \[a\cdot 2^{k+\ord_p(2)/2}=-a\cdot 2^k.\]
Let $R=\lbrace a_1,\dots, a_m\rbrace\subs \F_p^\ast$ be a set of representatives of $\F_p^\ast/H$. Then
\[\left\lbrace a\cdot 2^k, -a\cdot 2^k|a\in R, 0\le k\le\ord_p(2)/2-1\right\rbrace\]
contains each element of $\F_p^\ast$ exactly once.

Let's view the vertices of $K_p$ as elements of $\F_p$. Given an $a\in \F_p^\ast$ and a $0\le k\le \ord_p(2)/2-1$, we define the square of a Hamilton cycle $H_{a,k}$ via the vertex ordering
\[\big(0\cdot a\cdot 2^k, 1\cdot a\cdot 2^k, 2\cdot a\cdot 2^k,\dots,(p-1)\cdot a\cdot 2^k\big).\]
As $a\cdot 2^k$ is coprime to $p$, every vertex appears in this ordering exactly once. Thus, $H_{a,k}$ contains all edges $uv$ with \[u-v\in \big\lbrace a\cdot 2^k,-a\cdot 2^k, a\cdot 2^{k+1},-a\cdot 2^{k+1}\big\rbrace.\]
Therefore the collection \[\big\lbrace H_{a,k}|a\in R, 0\le k\le \ord_p(2)/2-1, 2\vert k\big\rbrace\]
contains each edge exactly once. Here we used that $\ord_p(2)$ is not only even but also divisible by 4.
\end{proof}

\section{Concluding remarks}
\label{sec:conclusion}

\paragraph{Packing squares of Hamilton cycles.} 
Question~\ref{quest:HC2} belongs to the extensive field of packing problems, where pairwise disjoint embeddings of some graph $H$ into some host graph $G$ are sought for, ideally covering each edge of $G$ exactly once. Manifestations of this general problem have long been studied, for a wide range of graphs $H$ and host graphs $G$, as well as for hypergraphs.
In the classic examples, one usually tries to decompose the edge set of the complete graph $G=K_n$ into copies of a fixed graph $H$ (solved by Wilson's Theorem, which is a special case of the existence of designs~\cite{glock2023existence,Keevash}), into trees of certain size (e.g. the conjectures of Ringel and Gyárfás, the former recently being solved~\cite{keevash2020ringels,MontgomeryPokrovskiySudakov}), or into $2$-regular graphs (e.g.~the Oberwolfach problem~\cite{Oberwolfach}).
There are also far-reaching results for more general families of graphs (e.g.~\cite{ABCT}, see also the survey~\cite{GlockKuhnOsthus}).

For many of these problems, the ``immediate'' divisibility conditions are conjectured/proved to be sufficient for the perfect decomposition in question when $n$ is sufficiently large~\cite[Conjecture~1.4]{Oberwolfach}. For the square of a Hamilton cycle, this means that the degrees should be divisible by $4$, i.e. $n \equiv 1 \pmod 4$.

In terms of small values, for all primes up to $p\leq 61$, $p \equiv1 \pmod 4$,  Theorem~\ref{thm:packHC2} applies. It is not difficult to check that it is impossible to partition $K_9$ into two copies of $C_9^2$. Computer-aided searches of the second author together with Sebastian Lüderssen and Fabien Nießen~\cite{Silas+Friend} found examples of decompositions for $n=21, 25, 33, 45, 49, 57, 65, 69$, which leaves $n=73$ as the smallest unresolved case.
Furthermore, in Appendix~\ref{sec:appendix_packing}, we also describe a construction of an optimal packing of squares of Hamilton cycles into $K_{105}$, representing a relatively large non-prime order $n$.
We believe this to be sufficient evidence to conjecture that $n=9$ is the only exception.

\begin{conjecture}
    \label{conj:HC2}    
    For any $n \equiv 1 \pmod 4$, $n \not= 9$, there is a decomposition of $E(K_n)$ into copies of $C_n^2$.
\end{conjecture}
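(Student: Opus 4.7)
Since the conjecture is open, what follows is a research plan rather than a proof sketch. The natural strategy is to split by the arithmetic structure of $n$ and combine three ingredients: an algebraic decomposition for prime $n$ in the spirit of Theorem~\ref{thm:packHC2}, a product or recursive construction for composite $n$, and an absorption argument that kicks in once $n$ is large enough.

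The first step would be to extend Theorem~\ref{thm:packHC2} to all primes $p \equiv 1 \pmod 4$, not only those with $\ord_p(2)$ divisible by $4$. Since $4 \mid p-1$, the group $\F_p^\ast$ always contains an element $m$ of order divisible by $4$; the plan is to substitute such an $m$ for $2$ throughout the proof, using the coset partition $\F_p^\ast / \spa m$ to generate $(p-1)/4$ copies of $C_p^2$ via dilation. The obstruction is that the ``linear'' vertex ordering $(0, a, 2a, \dots, (p-1)a)$ crucially produces edge differences $\{\pm a, \pm 2a\}$; for a generic $m$ one first has to exhibit a square of a Hamilton cycle on $\F_p$ whose edge-difference multiset is exactly $\{\pm 1, \pm m\}$. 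Such a base object is not guaranteed a priori, and I expect constructing it to require a careful case analysis depending on $\ord_p(m) \pmod 4$ and perhaps a Hamiltonicity argument inside the Cayley graph $\mathrm{Cay}(\F_p, \{\pm 1, \pm m\})$.

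For composite $n$ the idea is to exploit the factorization. Given $n = n_1 n_2$ with $n_i \equiv 1 \pmod 4$ and $\gcd(n_1, n_2) = 1$, I would identify $\Z/n\Z \cong \Z/n_1\Z \times \Z/n_2\Z$ and try to weave decompositions of $K_{n_1}$ and $K_{n_2}$ into a decomposition of $K_n$ via cycles that move simultaneously in both coordinates. Prime powers $p^k$ could be attacked using the additive structure of $\F_{p^k}$ with a dilation argument analogous to Theorem~\ref{thm:packHC2}. The singular exception $n = 9$ would be explained by the absence of a suitable base cycle in $\Z/9\Z$, in line with the computer verification.

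Finally, for all sufficiently large $n \equiv 1 \pmod 4$, an alternative route is to take the approximate decomposition of Ferber, Lee, and Mousset~\cite{FerberLeeMousset}, which covers all but $o(n^2)$ edges of $K_n$, and complete it to a perfect decomposition by absorption. The main obstacle, in my view, is the absorber construction: one needs a small gadget $A \subset K_n$ such that $A \cup L$ decomposes into copies of $C_n^2$ for every admissible leftover $L$. Squares of Hamilton cycles are far more rigid than Hamilton cycles or generic bounded-degree spanning graphs, for which iterative absorption is now well developed, and it is precisely this rigidity that makes Conjecture~\ref{conj:HC2} resistant despite the strong tools available.
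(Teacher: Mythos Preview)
The statement is a conjecture, not a theorem, and the paper does not prove it; it merely proposes it, supported by Theorem~\ref{thm:packHC2} and computer searches for small $n$. You correctly recognise this and offer a research plan rather than a proof, which is the appropriate response.

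Your plan is reasonable and in fact partially echoes what the paper itself says in the concluding remarks: the authors expect that the special case of~\cite[Conjecture~1.4]{Oberwolfach} (and hence Conjecture~\ref{conj:HC2} for large $n$) will eventually fall to iterative absorption, while stressing that such a proof would be non-constructive and that their real interest is in explicit constructions. So your third ingredient is exactly the ``anticipated proof technique'' the paper names.

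On your first ingredient, one remark is worth making. Replacing $2$ by an arbitrary $m$ with $4\mid\ord_p(m)$ does not just require a Hamilton cycle in $\mathrm{Cay}(\F_p,\{\pm1,\pm m\})$; it requires that this $4$-regular circulant graph \emph{be} the square of a Hamilton cycle, i.e.\ that there exist a cyclic ordering of $\F_p$ in which consecutive and distance-two differences are exactly $\{\pm1,\pm m\}$. For $m=2$ the natural ordering does this automatically, but for generic $m$ it is a genuine combinatorial constraint and is likely the real bottleneck in pushing Theorem~\ref{thm:packHC2} to all primes $p\equiv1\pmod4$. You flag this obstruction, which is good, but the Hamiltonicity phrasing undersells its difficulty.
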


This is of course a strengthening of the special case of the conjecture from~\cite{Oberwolfach} for squares of Hamilton cycles, yet its main goal is not really to identify all the exceptional small $n$ for which a decomposition does not exist. It is rather to inspire novel ways of constructing the decomposition explicitly. 
The anticipated proof technique for~\cite[Conjecture~1.4]{Oberwolfach} is the powerful method of iterative absorption (see~\cite{glock2023existence}). By its nature, this will only serve as a proof of existence. 
Even though for an infinite sequence of $n$ an explicit construction is superior to proof of existence, explicit constructions for design-type questions are extremely rare to come by.
Finding an explicit construction for a decomposition of $E(K_n)$ into copies of $C_n^2$, even just for a new infinite sequence of $n\equiv 1 \pmod{4}$ (besides the one in Theorem~\ref{thm:packHC2}), would be highly interesting. 

\paragraph{Dimension $d\geq 3$.}
    We have determined precisely for every $n$ the maximal diameter $H_s(n,2)$ of a $2$-dimensional simplicial complex on $n$ vertices.   
    Using the ``partition/cut/glue'' approach mentioned in the introduction we are able to prove the following improved lower bound for $H_s(n,d)$ for $d \ge 3$.

\begin{theorem}\label{thm:main_large_d}
    For any $d\ge 3$, we have  
    \[\left(\frac{1}{d} - n^{-1/2+o(1)}   \right) \binom{n}{d} \le H_s(n,d).\]
\end{theorem}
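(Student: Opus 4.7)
The plan is to carry out the partition/cut/glue strategy outlined in the introduction for arbitrary $d \geq 3$. The three stages would be: (i) a near-decomposition of $\binom{[n]}{d}$ into ``Hamilton cycle $d$-th power'' hypergraphs, meaning $d$-uniform hypergraphs whose edges are exactly the $d$-subsets lying in some window of $d+1$ consecutive vertices of a cyclic ordering; (ii) cutting each cyclic object into a linear trail-$d$-th-power; and (iii) gluing the resulting linear trails into one long trail-$d$-th-power.

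For stage (i) the goal is a family of pairwise disjoint Hamilton cycle $d$-th powers whose union covers all but $n^{d-1/2+o(1)}$ of the $d$-subsets of $[n]$. One option is to mimic the $d = 2$ approach and prove a $d$-dimensional analogue of Theorem~\ref{thm:packHC2}: for a suitable prime $p$ lying in a short window $[n - n^{1/2+o(1)}, n]$, construct an explicit partition of $\binom{[p]}{d}$ into Hamilton cycle $d$-th powers using algebraic techniques; then the $d$-subsets meeting $[n] \setminus [p]$ account for at most $\mathcal{O}((n-p)\, n^{d-1}) = n^{d-1/2+o(1)}$ missed $d$-subsets. Alternatively, one could appeal to a generic packing result (random-greedy or iterative absorption, adapted from Ferber--Lee--Mousset to $d$-uniform hypergraphs) with an error bound of the same order.

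In stage (ii) each of the $\Theta(n^{d-1})$ cyclic objects is cut at an appropriate position, sacrificing $\mathcal{O}(d^2)$ $d$-subsets per cut, for a total cutting loss of $\mathcal{O}(n^{d-1})$, which is absorbed into the $n^{d-1/2+o(1)}$ budget. I would use the freedom in choosing the cut location to ensure that the endpoints of the resulting linear trails are well-distributed, since this is what enables economical gluing. In stage (iii) I would concatenate the linear trails one by one, inserting between consecutive pieces a short connector block of vertices drawn from a reserved pool of size $n^{1/2+o(1)}$; the well-distributed endpoints ensure that reusing the connector vertices over the $\Theta(n^{d-1})$ junctions can be done without producing forbidden repetitions of $d$-subsets within distance $d$ in the final sequence.

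The main obstacle is stage (i): the algebraic proof of Theorem~\ref{thm:packHC2} relies on the multiplicative coset structure of $\mathbb{F}_p^\ast$ together with the involution $x \mapsto -x$, a mechanism tightly tailored to $d = 2$. A $d$-uniform analogue would require identifying a group action whose orbits yield Hamilton cycle $d$-th powers (rather than unions of shorter cycles), which will likely impose a $d$-dependent arithmetic condition on $p$. Once stage (i) is in hand in either its algebraic or its random form, the cut-and-glue analysis of stages (ii) and (iii) parallels the $d = 2$ construction. Summing the losses, the final trail covers $\binom{n}{d} - n^{d-1/2+o(1)}$ of the $d$-subsets of $[n]$, and the $d$-dimensional analogue of Observation~\ref{obs:sequences} yields a simplicial $d$-complex of the claimed diameter.
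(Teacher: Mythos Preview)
Your proposal follows the same partition/cut/glue strategy that the paper sketches: decompose most $d$-sets into ``desired cycles'' (what you call Hamilton cycle $d$-th powers), cut them with freedom over the cut point, and glue the resulting paths into one long trail. The paper itself does not give a detailed proof of Theorem~\ref{thm:main_large_d}---it explicitly defers the argument to a separate manuscript~\cite{PRS_diameter}---so your sketch and the paper's are at the same level; one caveat is that your algebraic route for stage~(i) presumes primes with prescribed residue in windows of length $n^{1/2+o(1)}$, which is beyond what is currently known (the paper for $d=2$ only uses $n^{0.53}$), so the claimed $n^{-1/2+o(1)}$ error would have to come from your random-greedy alternative or some other mechanism rather than from the prime-gap reduction.
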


This improves for arbitrary $d\geq 3$ the error term of Bohman and Newman~\cite{bohman2022complexes} from inverse logarithmic to inverse polynomial.
The strategy is to first partition ``most'' edges of the complete $d$-uniform hypergraph into a limited number of so-called {\em desired cycles} (essentially squares of tight cycles).
We then cut these into \emph{desired paths} and due to our flexibility about where we cut, we obtain a good variety of vertex sequences at the ends.
Finally, we glue these pieces up into the promised \emph{desired trail} sequence, which gives a simplicial $d$-complex with large diameter.
The detailed proof of this argument appears in a separate manuscript \cite{PRS_diameter}, where we also conjecture that the trivial upper bound is tight for every $d\geq 3$ and large enough $n$.

    \begin{conjecture}
    \label{conj:exact}
        For every $d\geq 3$, we have $$H_s(n,d) = \left\lfloor \frac 1d \cdot \binom{n}{d} - \frac{d+1}{d} \right\rfloor$$ for all $n\geq n_0(d)$.    
    \end{conjecture}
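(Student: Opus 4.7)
The plan is to follow the two-tier strategy successfully used for $d=2$ in Theorem~\ref{thm:main, 4k+1}: first construct a decomposition-based ``main'' simplicial $d$-complex covering essentially every $d$-face, then carefully patch in the remaining $d$-faces by attaching short sub-constructions through a bounded number of auxiliary vertices. For $d=2$ this was powered by two ingredients: Walecki-type decompositions of $K_n$ into Hamilton cycles and the explicit generating sequences of Section~\ref{sec:gen sequences}. For general $d$, the corresponding first ingredient would be a decomposition of the edge set of the complete $(d+1)$-uniform hypergraph into squares of tight Hamilton cycles (the natural $d$-dimensional strengthening of Conjecture~\ref{conj:HC2}). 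I would invoke the iterative-absorption machinery of~\cite{glock2023existence,Keevash} to obtain such a decomposition non-constructively for all sufficiently large $n$ in the ``main'' residue class, namely those $n$ for which the natural divisibility conditions on vertex degrees and co-degrees are met.

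The second ingredient---a suitable analog of a generating sequence---would convert that abstract decomposition into a single long good sequence. The key property to preserve from Definition~\ref{def: generating sequence} is that a periodic sequence of differences in $\mathbb Z/n\mathbb Z$, together with its consecutive sub-sums of lengths up to $d$, covers every non-zero residue (now a $(d-1)$-tuple of residues) exactly once. One would first prove the analog of Proposition~\ref{prop: with turn}: any such higher-dimensional generating sequence yields a circular simplicial $d$-complex of diameter exactly $\tfrac{1}{d}\binom{n}{d} - \tfrac{d+1}{d}$. Then, for each of the finitely many residue classes of $n$ modulo the relevant modulus, one would build analogs of Lemma~\ref{lem: gen sequence, 1 and 2 missing} and Lemma~\ref{lem: gen sequence, 1,2,4,8 missing}: a generating sequence on $n' = n - c_d$ vertices that misses a prescribed small list of residue patterns, that can be cut at a specified position, and that can then be glued via higher-dimensional rotations and zig-zags through $c_d = O_d(1)$ auxiliary vertices so as to absorb both the missing-residue $d$-faces and all $d$-faces incident to the auxiliary vertices.

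The hardest step will be this last piece of gluing, because each new auxiliary vertex $v$ contributes $\Theta(n^{d-1})$ new $d$-faces that must be covered in a single long good trail, and for $d \ge 3$ the link of $v$ is itself a non-trivial simplicial $(d-1)$-complex rather than a mere graph. The natural approach is recursive: one uses the inductive hypothesis on $H_s(n-1, d-1)$ inside the link of each auxiliary vertex to route through most of that link, with $d=2$ as the base case supplied by Theorem~\ref{thm:main, 4k+1}. Controlling the boundary---ensuring the link-constructions for different auxiliary vertices remain mutually consistent and patch together with the circular generating-sequence construction into one globally good sequence---will require a delicate analog of Lemmas~\ref{lem: n=4k+2 first side} and~\ref{lem: n=4k+2 second side} in each dimension, and this is where I expect the proof to become most technical. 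Finally, as in the $d=2$ case, this induction will inevitably leave finitely many exceptional small $n$ per dimension, which is precisely why the conjecture is stated only for $n \ge n_0(d)$; enumerating and resolving those exceptions (by computer search or ad-hoc constructions) will be an unavoidable, if routine, additional layer of work for each $d$.
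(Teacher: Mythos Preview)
This statement is a \emph{conjecture} in the paper, not a theorem; the paper contains no proof of it. The only information the paper offers is a closing note that the conjecture is confirmed in separate ongoing work~\cite{Glock_diameter} via iterative absorption, with the construction being non-explicit. There is therefore no proof in the paper for your proposal to be compared against.

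Your proposal is in any case a programme rather than a proof, and several of its steps are either undefined or unjustified. There is a dimensional slip: the ridges of a simplicial $d$-complex are $d$-sets, so the relevant decomposition is of the complete $d$-uniform hypergraph $K_n^{(d)}$, not the $(d{+}1)$-uniform one you name. The higher-dimensional ``generating sequence'' is never made precise---you do not say what the residues are, what the analog of the $c_i$ is, or why an analog of Proposition~\ref{prop: with turn} should hold. Most critically, the recursive gluing through links is purely heuristic: you give no argument that an optimal $(d{-}1)$-dimensional trail inside the link of an auxiliary vertex can be threaded consistently with the ambient $d$-dimensional trail at both ends, and Theorem~\ref{thm:main, 4k+1} supplies no such interface control. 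These are precisely the difficulties that leave the statement a conjecture in this paper.
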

    
\paragraph{History of the problem and pseudo-manifolds.}
The question of the maximum possible diameter of simplicial $d$-complexes originated from the Hirsch conjecture, which was disproved by Santos \cite{HirschCounterexample}. The polynomial Hirsch conjecture however remains open: Is there a polynomial $p(n,d)$ such that any $d$-dimensional polytope with $n$ facets has diameter at most $p(n,d)$? If the answer is ``no'', then this also implies that there is no polynomial pivot rule for the simplex algorithm. 
To approach this question,
Santos~\cite{santosderzweite} considered the more general concept of
simplicial $d$-complexes instead of polytopes and established a superpolynomial lower bound.
Criado and Santos also studied the problem for the more restrictive concept of 
\emph{$d$-dimensional pseudo-manifolds without boundary}. These are simplicial $d$-complexes in which every ridge (sets of size $d-1$) is contained in exactly two facets (sets of size $d$). This is in contrast to a shortest path in simplicial $d$-complexes, where this number is {\em at most} two.
Analogous to $H_s(n,d)$ let $H_{pm}(n,d)$ be the maximum diameter of a $d$-dimensional pseudo-manifold.
Criado and Santos~\cite{criado2017maximum} showed that any $(d-1)$-dimensional simplicial complex with $n$ vertices can be turned into a $(d-1)$-dimensional pseudo-manifold without boundary with $2n$ vertices and diameter at least two larger.
Again, Criado and Newman~\cite{criado2021randomized} and subsequently Bohman and Newman~\cite{bohman2022complexes} improved the constructions and currently the best known bounds from~\cite{bohman2022complexes} are
\[ \left( \frac{2}{(d+1)(d+1)!} - (\log n)^{-\eps} \right) n^d \le H_{pm}(n,d) \le \frac{2n^d}{(d+1)(d+1)!} \, . \]
In our language, for $d=2$ the object of interest is a ``trail-cube'' or the third power of paths.
It would be interesting to see if our approach also can be generalized to this setting, but already Theorem~\ref{thm:packHC2} does not extend naturally to the third power of a Hamilton cycle.

\paragraph{Relation to $2$-radius sequences and harmonious colourings.}

While finalizing our manuscript we learned about a closely related set of problems that goes back even further than the definition of $H_{s}(n,d)$.
For two integers $n \ge k$ a sequence of elements from $\Z /n\Z$ is called a \emph{$k$-radius sequence} if every pair from $\Z /n\Z$ appears at least once at distance at most $k$ in that sequence. 
Let $f_k(n)$ be the length of a shortest $k$-radius sequence.
Note that an easy double-counting lower bound is $f_k(n) \ge \lceil \tfrac 1k \binom{n}{2} + \tfrac{k+1}{2} \rceil$.
This concept was introduced by Jaromczyk and Lonc~\cite{jaromczyk2005sequences} 20 years ago, motivated by a problem from transmission of data sets, and has since then been studied from different angles (see~\cite{bondy2016constructing} and the references therein), for example, the general asymptotically optimal bound $f_k(n) = \tfrac{1}{2k}n^2 + \mathcal O (n^{1+\eps})$ was obtained by Jarmoczyk, Lonc, and Truszczy\'{n}ski~\cite{jaromczyk2012constructions}.

A $2$-radius sequence in which every pair appears exactly once at distance two gives an optimal simplicial complex because as a \labels{} sequence together with the all zero \layout{} it gives a good $(\labels,\layout)$ pair.
This is only possible when $n \equiv 1 \pmod 4$ and if it exists implies that $f_2(n)=H_s(n,2)-3$ in this case.
Notably, Bondy, Lonc, and Rzążewski~\cite{bondy2016constructing} precisely determined $f_2(n)$ for all but few residues classes of $n$ (see~\cite[Corollary~4.11]{bondy2016constructing,bondy2016constructing_erratum}) and thereby also determine $H_s(n,2)$ for any $n \equiv 1 \pmod 4$ such that $n \not=9$, $n \not\equiv 21 \pmod{24}$, and $n \not\equiv 7665 \pmod{8760}$.
The other way round our generating sequence for $n \equiv 5 \pmod 8$ does not use turns (c.f.~Lemma~\ref{lem:4k+1}) and, hence, our result determines $f_2(n)$ for these $n$, which solves the case $n \equiv 21 \pmod{24}$ (and with~\cite[Lemma~2.1]{bondy2016constructing} also the open cases $n \equiv 18,19,20 \pmod{24}$ of~\cite[Corollary~4.11]{bondy2016constructing}).
Interestingly, they use the concept of a $2$-additive sequence, which very closely resembles our definition of a generating sequence, and find these to construct the optimal $2$-radius sequences.
When $n \not\equiv 1 \pmod 4$ then there is no direct connection between the two concepts, because an optimal $2$-additive sequence will repeat some pairs within distance two and an optimal simplicial complex will need to have turns.
There also is a notion of a related $2$-perfect sequence, which was known in the literature before, see e.g.~\cite{bange1983sequentially}.

Another concept related to our $d>2$ case was investigated by Dębski, Lonc, and Rzążewski~\cite{dkebski2018achromatic}.
They study \emph{harmonious} colorings of a $k$-uniform hypergraph $H$, which is a coloring of the vertices of $H$ such that each edge receives $k$ different colors and every $k$-subset of colors is used at most once.
The \emph{harmonious number} of a $k$-uniform hypergraph $H$ is the minimum number of colors in a harmonious coloring of $H$.
Note that the maximum number $N$ such that the harmonious chromatic number of the desired path on $N$ vertices is at most $n$ is a lower bound for $H_s(n,d)+d+1$.
Indeed, identifying vertices with the same color gives a desired trail of length $N$ with only $n$ vertices.

A class of hypergraphs $\mathcal{H}$ is called \emph{fragmentable} if for every $\eps>0$, there is a $c>0$ such that for any $H \in \mathcal{H}$ without isolated vertices there is a set $S \subseteq V(H)$ of size $\eps v(H)$ such that $H-S$ only has components of size at most $c$.

\begin{theorem}[Dębski, Lonc, and Rzążewski~\cite{dkebski2018achromatic}]
    Let $\mathcal{H}$ be a fragmentable class of $k$-uniform hypergraphs of bounded maximum degree. Then for every $\eps>0$, there exists an $m_0$ such that for any $H \in \mathcal{H}$ with $m \ge m_0$ edges the harmonious number is at most $(1+\eps) \sqrt[k]{k!m}$.
\end{theorem}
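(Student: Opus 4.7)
The lower bound $\chi_h(H) \ge \lceil\sqrt[k]{k!m}\rceil$ is immediate: the $m$ edges receive $m$ distinct color $k$-subsets of the palette $[n]$, forcing $m \le \binom{n}{k}$. For the matching upper bound my plan combines fragmentability with a structured random coloring and a Hall-type completion step.

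Fix $\eps > 0$, pick an auxiliary $\eta = \eta(\eps,\Delta,k) > 0$ sufficiently small, and set $n = \lceil(1+\eps)\sqrt[k]{k!m}\rceil$. Applying fragmentability to $H$, there is a separator $S \subseteq V(H)$ with $|S| \le \eta v(H)$ such that every component $C_i$ of $H-S$ has at most $c = c(\eta)$ vertices, and therefore at most $c\Delta/k$ edges (where $\Delta$ is the degree bound). Since $\Delta$ is constant we have $v(H) = \Theta(m)$, so $|S| = O(\eta m)$ and the number of edges of $H$ meeting $S$ is at most $\Delta|S| = O(\eta m)$; choosing $\eta$ small enough makes this at most $\eps m/8$.

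I would color $V(H) \setminus S$ as follows. In each $C_i$ fix any internally harmonious labelling using $q_i \le c$ abstract slots, and independently for each $i$ pick a uniformly random injection $\pi_i$ from the slots of $C_i$ into $[n]$; translate each vertex's slot into its actual color via $\pi_i$. Each component is then internally harmonious, while two edges $e \in C_i$, $e' \in C_j$ with $i \ne j$ have equal color $k$-sets with probability $1/\binom{n}{k}$. The expected number of cross-component collisions is $\Theta(m)$, which is too many for a direct random argument; the key idea is to redistribute by deletion: define the ``bad'' vertex set $B$ containing one endpoint of each colliding pair; a Markov/concentration argument combined with the bounded-degree structure shows the $\pi_i$ can be realised with $|B| \le \eps m/(8\Delta)$. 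Adding $B$ to $S$ produces an enlarged separator $S'$ with $|S'| = O(\eta m + \eps m/\Delta)$ such that the coloring on $V(H) \setminus S'$ is harmonious.

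Finally I would extend the coloring to $S'$. The uncolored edges are those meeting $S'$, numbering at most $\Delta|S'| \le \eps m/4$, while $\binom{n}{k} - m \ge \eps m/3$ color $k$-subsets of $[n]$ remain unused. I would assign each uncolored edge to a distinct unused $k$-subset via a Hall-type matching in the bipartite incidence graph between uncolored edges and unused $k$-subsets, and read off the colors of $S'$-vertices consistently by processing $S'$ in a fragmentation-respecting order so that every vertex faces $O(1)$ previously-fixed constraints. The main obstacle is precisely this last step: vertices of $S'$ lie in several uncolored edges whose $k$-subset demands must agree on common vertices, so the matching cannot be chosen arbitrarily. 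Bounded maximum degree is essential, as it keeps the per-vertex consistency constraints bounded and makes the Hall condition verifiable. Provided $m_0$ is large enough in terms of $\eps$, $k$ and the parameters of $\mathcal{H}$, this yields a harmonious coloring with at most $(1+\eps)\sqrt[k]{k!m}$ colors.
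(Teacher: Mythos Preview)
The paper does not prove this theorem: it is quoted as a result of D\k{e}bski, Lonc, and Rz\k{a}\.{z}ewski and then applied as a black box to the family of desired paths. So there is no ``paper's own proof'' to compare your sketch against.

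On the merits of your sketch, there is a genuine gap in the collision step. You correctly compute that the expected number of cross-component color collisions under the independent random injections is $\Theta(m)$ (since $\binom{n}{k}\approx(1+\eps)^k m$ and there are $\binom{m}{2}$ pairs). You then assert that ``a Markov/concentration argument'' yields a realisation with only $|B|\le \eps m/(8\Delta)$ bad vertices. That cannot work as stated: concentration around an expectation of order $m$ will not produce an outcome of order $\eps m$. To salvage the approach one has to do something substantially smarter than deleting one endpoint per collision---for instance, coloring components sequentially rather than independently, or grouping components and assigning them to disjoint color classes so that most pairs cannot collide at all. Without such an idea the random step fails by a constant factor that does not shrink with $m$.

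The extension step is also not a proof: you explicitly flag that the Hall matching must respect vertex-consistency across the multiple uncolored edges through each vertex of $S'$, and you do not say how to enforce this. Bounded degree makes the local constraint graph sparse, but it does not by itself guarantee a consistent completion; one typically needs an iterative or greedy argument that colors the vertices of $S'$ one at a time (not the edges), counting forbidden colors per vertex, and that argument has to be written out.
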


This result is asymptotically optimal and can be applied to the family of desired paths, which is a fragmentable family of $d$-uniform hypergraphs, where each member with $N$ vertices has $dN-\binom{d+1}{2}$ edges and maximum degree bounded by $d^2$.
Hence, the theorem gives the upper bound $(1+\eps) \sqrt[d]{d!dN-d!\binom{d+1}{2}}$ for the harmonious number and, with the observation above, implies $H_{s}(n,d) \ge (1-\eps) \tfrac 1d \binom{n}{d}$.
This asymptotically determines $H_{s}(n,d)$, just as Bohman and Newman~\cite{bohman2022complexes} do, but without an explicit error term.

\paragraph{Note.}
In very recent ongoing work~\cite{Glock_diameter} together with Stefan Glock, we confirm Conjecture~\ref{conj:exact}. The proof is using iterative absorption, hence the construction is not explicit.

\subsection*{Acknowledgements}
Funded by the Deutsche Forschungsgemeinschaft (DFG, German Research Foundation) under Germany's Excellence Strategy – The Berlin Mathematics Research Center MATH+ (EXC-2046/1, project ID: 390685689).

We are grateful to G\"unter Rote for drawing our attention to the problem and its design theoretic nature and to Zbigniew Lonc for introducing us $2$-radius sequences and harmonious colorings.
We thank the HPC service of the Freie Universit\"at Berlin for the computation time provided (10.17169/refubium-26754). 

\subsection*{Data Availablility Statement}
No datasets were generated or analyzed during the current study.

\bibliographystyle{amsplain}
\bibliography{bib}

\appendix

\section{Detailed Checks of The Constructions}
\subsection{Check for Figure~\ref{fig:4n+4}}\label{sec: Check for 4n+4}
Here we check that the triangles shown in Figure~\ref{fig:4n+4} cover exactly those $20k+8$ edges that have residue 1 or 2 or contain $a$, $b$, or $c$.

 We start by checking that the number of edges is correct. The construction in Figure~\ref{fig:4n+4} has the following edges:
\begin{itemize}
    \item There is one edge $\lbrace 4k-2,4k-1\rbrace$ before the rotation.
    \item The path of the rotation contains $1+4k-7+9=4k+3$ vertices.
    As each of these vertices is incident to $a$ and this path contains $4k+2$ edges, the rotation covers $2(4k+3)-1=8k+5$ edges.
    \item Between the rotation and the zig-zag there is one edge $\{ 4k,0\}$.
    \item The zig-zag path contains $2k-3+2k-3=4k-6$ vertices.
    As each of these vertices is incident to $b$ and $c$ and the path contains $4k-7$ edges, the zig-zag covers $3(4k-6)-1=12k-19$ edges.
    \item After the zig-zag, there are $20$ additional edges.
\end{itemize}
We can see that the construction indeed covers exactly $1+(8k+5)+1+(12k-19)+20=20k+8$ edges.

Finally, we have to check that all edges with residue 1 or 2 as well as all edges containing $a$, $b$ or $c$ appear in the construction at least once.
\begin{itemize}
    \item Every edge containing $a$ appears in the rotation.
    \item The zig-zag contains all edges $\lbrace b, x\rbrace$ with $x\in\lbrace 0,1,\dots,4k-7\rbrace$. The edge $\lbrace b,4k\rbrace$ is in the rotation and the edges $\lbrace b,x\rbrace$ with $4k-6\le x\le 4k-1$ appear after the zig-zag. The edge $\lbrace b,c\rbrace$ is the last edge of the construction and we can conclude that all edges containing $b$ appear in the construction.
    \item The zig-zag contains all edges $\lbrace c,x\rbrace$ with $x\in\lbrace 0,1,\dots,4k-7\rbrace$. The edges $\lbrace c,x\rbrace $ with $4k-6\le x\le 4k$ appear after the zig-zag. Hence, the construction contains all edges containing $c$.
    \item Regarding edges with residue 1, the rotation contains all of them except for $\lbrace 4k-7,4k-8\rbrace$, $\lbrace 4k-5,4k-6\rbrace$, $\lbrace 4k-4,4k-5\rbrace$, $\lbrace 4k-2,4k-3\rbrace$, $\lbrace 4k-1,4k-2\rbrace$, $\lbrace 4k,4k-1\rbrace$ and $\lbrace 0,4k\rbrace$.  $\lbrace 4k-7,4k-8\rbrace$ appears in the zig-zag, $\lbrace4k-1,4k-2\rbrace$ appears before the rotation, $\lbrace 0,4k\rbrace$ appears between the rotation and the zig-zag. All other edges appear after the zig-zag.
    \item Regarding edges with residue 2, the zig-zag path contains all of them except $\lbrace x,x+2\rbrace$ with $4k-8\le x\le 4k$~\footnote{In the inequality $4k-8\le x\le 4k$ is an integer, while in $\{x,x+2 \}$ it is an element of $\Z/n'\Z$.}. The edges $\lbrace 4k-8,4k-6\rbrace$, $\lbrace 4k-7,4k-5\rbrace$, $\lbrace 4k-5,4k-3\rbrace$, $\lbrace 4k-2,4k\rbrace$ and $\lbrace 4k-1,0\rbrace$ are in the rotation. The edges $\lbrace 4k-6,4k-4\rbrace$,  $\lbrace 4k-4,4k-2\rbrace$, $\lbrace 4k-3,4k-1\rbrace$ and $\lbrace 4k,1\rbrace$ appear after the zig-zag.
\end{itemize}

\subsection{Check for Figure~\ref{fig:n=4k+3}}\label{sec: Check for 4n+3}
Here we check that the triangles shown in Figure~\ref{fig:n=4k+3} cover exactly those $16k+6$ edges that have residue 1 or 2 or contain $a$ or $b$ or are $\lbrace 0,13\rbrace$.

We start by confirming that the number of edges is correct. The construction in Figure~\ref{fig:n=4k+3} has the following edges:

\begin{itemize}
    \item The path of the first rotation contains  $2+(2k-6)+(2k-3)+3=4k-4$ vertices. Hence, the rotation covers $2(4k-4)-1=8k-9$ edges. However, the edge $\lbrace 0, 7\rbrace$ is considered to belong to $\towriteornottowrite{\C}{C}$; hence it is not counted here. Thus, this only covers $8k-10$ edges
    \item Between the rotations there are 35 edges.
    \item The path of the second rotation contains  $2+(4k-12)=4k-10$ vertices. Hence, the rotation covers $2(4k-10)-1=8k-21$ edges.
    \item After the second rotation, there are 2 additional edges.
\end{itemize}
We can see that the construction indeed covers exactly $(8k-10)+35+(8k-21)+2=16k+6$ edges.

To finish the proof, we have to check that all edges with residue 1 or 2 as well as all edges containing $a$ and $b$ and the edge $\lbrace 0, 13\rbrace$ appear in the construction in Figure~\ref{fig:n=4k+3} at least once.

\begin{itemize}
    \item The edge $\lbrace 0, 13\rbrace$ appears in the first rotation.
    \item Regarding the edges containing $a$, the first rotation covers all of them except for edges of the form $\lbrace a, x\rbrace$ with $1\le x\le 6$. Those appear between the two rotations.
    \item Regarding the edges containing $b$, the second rotation contains all except for edges of the form $\lbrace b,x\rbrace$ with $x=a$ or $1\le x\le 11$. The edges $\{ b,a \}$ and $\{ b,11 \}$ appear in the first rotation, while the rest appears between the two rotations
    \item Regarding the edges of residue 1, all of them appear in the second rotation except for the edges $\lbrace x, x+1\rbrace$ with $0\le x\le 11$ and $x=4k-1$. The edges $\lbrace 4k-1, 4k\rbrace$ and $\lbrace8,9\rbrace$ appear in the first rotation and the edge $\lbrace 11, 12\rbrace$ appears after the second rotation. All other edges appear between the two rotations.
    \item Regarding the edges of residue 2, they all appear in the first rotation except for the edges $\lbrace x,x+2\rbrace$ with $0\le x\le 7$ and $x\in\lbrace 11,4k-1,4k\rbrace$. The edge $\lbrace 11,13\rbrace$ appears after the second rotation and the edge $\lbrace 4k-1,0\rbrace$ appears in the second rotation. Every other edge appears between the two rotations.
\end{itemize}

\subsection{Check for Figure~\ref{fig:n=4k+2}}\label{sec: Check for 4n+2 first side}
Here we check that the triangles shown in Figure~\ref{fig:n=4k+2} cover exactly those $16k+6$ edges stated in Lemma~\ref{lem: n=4k+2 first side}.

We start by confirming that the number of edges is correct.

\begin{itemize}
    \item The path of the first rotation contains  $3+(2k-9)+(2k-8)+3=4k-11$ vertices. Hence, the rotation covers $2(4k-11)-1=8k-23$ edges. However, the edge $\lbrace 6, 17\rbrace$ is considered to belong to $\towriteornottowrite{\C_k}{C}$; hence it is not counted here. Thus, this only covers $8k-24$ edges
    \item Between the two rotations there are 13 edges.
    \item The path of the second rotation contains  $2+(4k-18)+2=4k-14$ vertices. Hence, the rotation covers $2(4k-14)+-1=8k-29$ edges.
    \item After the second rotation, there are 46 additional edges.
\end{itemize}
We can see that the construction indeed covers exactly $(8k-24)+13+(8k-29)+46=16k+6$ edges.

To finish the check, we have to certify that all edges stated in the lemma appear somewhere in the construction of Figure~\ref{fig:n=4k+2}.

\begin{itemize}
    \item The edge $\lbrace 0,17\rbrace$ appears in the first rotation.
    \item Regarding the edges containing $a$, they all appear in the first rotation except for $\lbrace a,x\rbrace$ with $2\le x\le 5$ or $7\le x\le 12$ or $14\le x\le 16$. The last three edges appear between the two rotations. All others appear after the second rotation.
    \item Regarding the edges containing $b$, the second rotation contains all of them except for $\lbrace b,x\rbrace$ with $x=1$ or $3\le x\le 16$. The edges $\lbrace b,1\rbrace$ and $\lbrace b,13\rbrace$ appear in the first rotation. The edges $\lbrace b,14\rbrace$,$\lbrace b,15\rbrace$ and $\lbrace b,16\rbrace$ appear between the two rotations. All other edges appear after the second rotation.
    \item Regarding the edges corresponding to residue 1, they all appear in the second rotation except for $\lbrace x,x+1\rbrace$ with $0\le x\le 16$ or $x=18$. For $0\le x\le 13$, this edge appears after the second rotation. For $14\le x\le 16$, this edge appears between the two rotations. The edge $\lbrace 18,19\rbrace$ appears in the first rotation.
    \item Regarding the edges corresponding to residue 2, they all appear in the first rotation except for $\lbrace x,x+2\rbrace$ with $0\le x\le 17$. The edges $\lbrace 0,2\rbrace$ and $\lbrace 17,19\rbrace$ appear in the second rotation. For $x\le 12$, the edge appears after the second rotation. For $13\le x\le 16$, the edge appears between the two rotations.
\end{itemize}

\subsection{Check for Figure~\ref{fig:n=4k+3 second half}}\label{sec: Check for 4k+2 second side}
Here we check for both constructions shown in Figure~\ref{fig:n=4k+3 second half}, that they cover exactly those $20k+14$ edges that are listed in Lemma~\ref{lem: n=4k+2 second side}. 

We start with the construction for even $k$ and count the number of edges it covers.
\begin{itemize}
    \item There is one edge $\{ 6,c\}$ before the first zig-zag not counting the edge $\lbrace0,6\rbrace$.
    \item The first zig-zag path contains $(k+1)+k+1=2k+2$ vertices. Hence, the zig-zag covers $3(2k+2)-1=6k+5$ edges.
    \item There are 3 edges between the first and the second zig-zag.
    \item The second zig-zag path contains $(k-2)+(k-4)=2k-6$ vertices. Hence, the zig-zag covers $3(2k-6)-1=6k-19$ edges.
    \item There are three edges between the second zig-zag and the rotation.
    \item The rotation has a path of length $3+(\frac{k}{2}-1)+\frac{k}{2}+\frac{k}{2}+(\frac{k}{2}-1)+(\frac{k}{2}-2)+\frac{k}{2}+\frac{k}{2}+(\frac{k}{2}+1)+1=4k+1$ vertices. Hence, the rotation covers $2(4k+1)-1=8k+1$ edges.
    \item After the rotation there are 20 edges.
\end{itemize}

We can see that the construction indeed covers 
$1+(6k+5)+3+(6k-19)+3+(8k+1)+20=20k+14$ edges.

To finish the proof for even $k$, we have to check that all edges stated Lemma~\ref{lem: n=4k+2 second side} appear somewhere in the construction of Figure~\ref{fig:n=4k+3 second half}.

\begin{itemize}
    \item The edges $\lbrace x,y\rbrace$ with $x\in\lbrace a,b\rbrace$ and $y\in\lbrace c,d,e\rbrace$ all appear at the very end of the construction. There, we also find the three edges $\lbrace c,d\rbrace$, $\lbrace d,e\rbrace$ and $\lbrace e,c\rbrace$.
    \item Regarding edges containing $c$, the edges $\lbrace c,x\rbrace$ with $x\equiv 0$ or 3 mod 4 all appear in the first zig-zag. The other edges all appear in the second zig-zag except for $\lbrace c,2\rbrace$, $\lbrace c,6\rbrace$, and $\lbrace c,4k-x\rbrace$ for $x \in \{ 3,7,11,15 \}$.
    The edge $\lbrace c,2\rbrace$ is in the first zig-zag and $\lbrace c,6\rbrace$ appears before the first zig-zag.
    The edges $\{ c,4k-3 \}$ and $\{ c,4k-11 \}$ appear between the second zig-zag and the rotation and the other two edges appear after the rotation.
    \item Regarding edges containing $d$, the edges $\lbrace d,x\rbrace$ with $d\equiv 0$ or 3 mod 4 all appear in the first zig-zag. The other edges all appear in the second zig-zag except for $\lbrace d,2\rbrace$, $\lbrace d,6\rbrace$, and $\lbrace d,4k-x\rbrace$ for $x \in \{ 3,7,11,15 \}$.
    The first two are between the two zig-zags and the other four appear after the rotation.
    \item Regarding edges containing $e$, they all appear in the rotation.
    \item Regarding edges of residue 4, the edges $\lbrace x,x+4\rbrace$ with $x\equiv 0$ or 3 mod 4 all appear in the first zig-zag~\footnote{Here $x$ is treated as an integer in $x\equiv 0$ or 3 mod 4, but as an element of $\Z/n'\Z$ in $\lbrace x,x+4\rbrace$.}. All others appear in the second zig-zag except for $\lbrace 2,6\rbrace$, $\lbrace 6,10\rbrace$, $\{4k-19,4k-15 \}$, $\{4k-15,4k-11 \}$, $\lbrace 4k-11,4k-7\rbrace$, $\lbrace 4k-7,4k-3\rbrace$ and $\lbrace 4k-3,0\rbrace$. The first two appear between the two zig-zags, $\{4k-19,4k-15 \}$ appears in the rotation, and the last three appear after the rotation.
    \item Regarding edges of residue 8, they all appear in the rotation except for $\lbrace 4k-19, 4k-11\rbrace$ and $\lbrace 4k-7, 4k-15\rbrace$. The former appears after the second zig-zag, and the latter after the rotation.
\end{itemize}
This completes the proof for even $k$.

For odd $k$, we start again by checking that the number of edges is correct. As before, there must be $20k+14$ edges.

\begin{itemize}
    \item There are $3$ edges before the first zig-zag not counting the edge $\lbrace 0,6\rbrace$.
    \item The first zig-zag path contains $k+k+2=2k+2$ vertices. Hence, the zig-zag covers $3(2k+2)-1=6k+5$ edges.
    \item Between the first zig-zag and the second rotation, there are $17$ edges.
    \item The path of the rotation contains $2+(k-5)+k=2k-3$ vertices. Hence, the rotation covers $2(2k-3)-1=4k-7$ edges.
    \item Between the first rotation and the second zig-zag, there are $3$ edges.
    \item The second zig-zag path contains $\frac{4k-4}{8}+\frac{4k-12}{8}+\frac{4k-20}{8}+\frac{4k+4}{8}=2k-4$ vertices. Hence, the zig-zag covers $3(2k-4)-1=6k-13$ edges.
    \item There is actually one edge $\{ e,4k-3 \}$ contained in the second zig-zag and also the second rotation.
    \item The path of the second rotation contains $1+\frac{4k+4}{8}+\frac{4k-4}{8}+2+\frac{4k+4}{8}+\frac{4k+4}{8}=2k+4$ vertices. Hence, the rotation covers $2(2k+4)-1=4k+7$ edges.
\end{itemize}
Therefore, the construction indeed covers $3+(6k+5)+17+(4k-7)+3+(6k-13)-1+(4k+7)=20k+14$ edges.

One last time, we are checking that all edges listed in Lemma~\ref{lem: n=4k+2 second side} appear somewhere in the construction.
\begin{itemize}
    \item The edges $\lbrace x,y\rbrace$ with $x\in\lbrace a,b\rbrace$ and $y\in\lbrace c,d,e\rbrace$ all appear between the first zig-zag and the first rotation. There, we also find the three edges $\lbrace c,d\rbrace$, $\lbrace d,e\rbrace$, and $\lbrace e,c\rbrace$.
    \item Regarding edges containing $c$, all edges of the form $\lbrace c,x\rbrace$ with $x\equiv 0$ or 3 mod 4 appear in the first zig-zag except $\{ c,0\}$, which is immediately before it. In the first rotation, we find all the other edges except for $\lbrace c,2\rbrace$, $\lbrace c,10\rbrace$, and $\lbrace c,6\rbrace$. The first two appear in the first zig-zag, and the last one appears before the first zig-zag.
    \item Regarding edges containing $d$, all edges of the form $\lbrace d,x\rbrace$ with $x\equiv 0$ or 3 mod 4 appear in the first zig-zag except for $\lbrace d,0\rbrace$, which is between the first rotation and the second zig-zag.
    The others appear in the second zig-zag except for $\lbrace d,2\rbrace$, $\lbrace d,10\rbrace$, $\lbrace d,6\rbrace$ and $\lbrace d,14\rbrace$. The first two appear in the first zig-zag, the others between this zig-zag and the first rotation.
    \item Regarding edges containing $e$, all edges of the form $\lbrace e,x\rbrace$ with $x\equiv 1$ or 2 mod 4 appear in the second zig-zag or immediately after it except for $\lbrace e,2\rbrace$, $\lbrace e,10\rbrace$, $\lbrace e,6\rbrace$ and $\lbrace e,14\rbrace$. The edges $\lbrace e,2\rbrace$, and $\lbrace e,6\rbrace$ appear in the second rotation, the edges $\lbrace e,10\rbrace$ and $\lbrace e,14\rbrace$ appear between the first zig-zag and the first rotation. The edges $\lbrace e,x\rbrace$ with $x\equiv 0$ or 3 mod 4 all appear in the second rotation.
    \item Regarding edges of residue 4, the edges $\lbrace x,x+4\rbrace$ with $x\equiv 0$ or 3 mod 4 all appear in the first zig-zag except $\lbrace 0,4\rbrace$ which appears right before it. All others appear in the first rotation except for $\lbrace 2,6\rbrace$, $\lbrace 6,10\rbrace$, $\lbrace 10,14\rbrace$, $\lbrace 18,22\rbrace$, and $\lbrace 4k-3,0\rbrace$.
    The edge $\lbrace 2,6\rbrace$ appears in the second rotation, $\lbrace 6,10\rbrace$, $\lbrace 10,14\rbrace$ before the first rotation, and $\lbrace 18,22\rbrace$ in the second zig-zag.
    \item Regarding edges of residue 8, the edges $\lbrace x,x+8\rbrace$ with $x\equiv 1$ or 2 mod 4 appear in the second zig-zag except for $\lbrace 4k-7,0\rbrace$, $\lbrace 4k-3,4\rbrace$, $\lbrace 2,10\rbrace$, $\lbrace 10,18\rbrace$, $\lbrace 6,14\rbrace$ and $\lbrace 14,22\rbrace$. The first two edges appear immediately before and after the second zig-zag, $\lbrace 2,10\rbrace$ is in the first zig-zag, $\lbrace 10,18\rbrace$ and $\lbrace 6,14\rbrace$ are before the first rotation and  $\lbrace 14,22\rbrace$ is in the first rotation. The edges $\lbrace x,x+8\rbrace$ with $x\equiv 0$ or 3 mod 4 all appear in the second rotation.
\end{itemize}

\section{Optimal Simplicial Complexes for Small $n$}\label{sec:appendix}
For small $n$, we can use a Brute Force Search to find optimal simplicial complexes.
Table~\ref{tab:small complexes} describes the $(\layout{},\labels{})$ pair of an optimal simplicial complex\towriteornottowrite{}{es} for all $n$ for which our construction above did not work.

\begin{sidewaystable}
    \begin{tabular}{|c|p{\linewidth}|}
        \hline
        $n$ & optimal simplicial complex \\\hline
          3  & [], [0,1,2] \\\hline
          4 & [0], [0,1,2,3] \\\hline
          5 & [0,0,0], [0,1,2,3,4,0] \\\hline 
          6 & [0,0,0,0,1], [0,1,2,3,4,0,5,1]\\\hline 
          7 & [0,0,0,1,1,0,0,1,1],[0,1,2,3,4,5,0,6,4,1,5,2]\\\hline 
          8 & [0,0,0,0,0,0,0,1,0,1,1,0], [0,1,2,3,4,0,5,6,2,7,3,5,1,4,6]\\\hline 
          9 & [0,0,0,0,0,0,0,0,1,0,0,1,0,0,0,1], [0,1,2,3,4,0,5,6,1,7,4,8,6,2,5,7,3,8,0]\\\hline
          10 & [0,0,0,0,0,0,0,0,1,0,0,1,1,0,1,0,1,0,1,0,1],[0,1,2,3,4,0,5,6,2,7,8,9,0,7,4,1,6,8,3,9,5,7,1,8] \\\hline 
          11 & [0,0,0,0,0,0,0,0,0,0,1,1,0,1,1,0,0,0,1,1,1,0,0,1,1,0],[0,1,2,3,4,0,5,6,1,7,4,8,6,9,10,1,8,0,10,7,2,6,3,5,8,2,9,7,3] \\\hline 
          12 & [0,0,0,0,0,0,0,0,0,0,0,0,0,0,0,0,0,0,0,0,0,0,1,0,0,1,1,0,1,1,1],[0,1,2,3,4,0,5,6,1,4,7,8,0,9,10,1,8,11,2,5,7,9,6,2,10,11,4,9,3,7,10,5,8,6]\\\hline 
          13 & [0,0,0,0,0,0,0,0,0,0,0,0,0,0,0,0,0,0,0,0,0,0,0,0,0,0,0,0,0,0,0,0,0,0,0,0,0],[0,1,3,7,8,10,1,2,4,8,9,11,2,3,5,9,10,12,3,4,6,10,11,0,4,5,7,11,12,1,5,6,8,12,0,2,6,7,9,0]\\\hline 
          14 & [0,0,0,0,0,0,0,0,0,0,0,0,0,0,0,0,0,0,0,0,0,0,1,0,0,1,0,1,1,1,0,1,0,0,1,0,1,0,1,0,1,0,1,0] \\ & [0,1,2,3,4,5,6,0,3,7,8,1,4,9,0,8,10,2,5,7,9,6,10,1,11,4,7,12,13,0,11,5,1,13,8,11,6,12,2,13,9,11,3,12,10,13,7] \\\hline
          15 & [1,0,1,0,1,0,1,0,1,0,1,0,1,0,1,0,1,0,1,0,1,0,1,0,1,1,1,1,1,1,1,1,1,1,0,0,1,1,0,1,1,1,1,0,1,1,0,1,1,1,1] \\ & [0,1,3,4,6,7,9,10,12,0,2,3,5,6,8,9,11,12,1,2,4,5,7,8,13,0,5,10,2,9,3,11,4,12,6,1,14,9,4,10,3,8,1,7,11,0,6,14,5,12,7,2,8,0]\\\hline 
          16 & [1,0,1,0,1,0,1,0,1,0,1,0,1,0,1,0,1,0,1,0,1,0,1,0,1,1,1,1,1,1,1,1,1,1,1,0,0,0,0,1,1,1,0,0,0,0,0,0,1,1,0,0,1,0,0,0,0,0]\\ & [0,1,3,4,6,7,9,10,12,0,2,3,5,6,8,9,11,12,1,2,4,5,7,8,13,0,5,10,1,6,11,3,9,2,14,4,12,15,5,14,11,7,12,6,0,15,8,2,10,7,15,3,14,8,1,9,15,4,11,10,0]\\\hline
          18 & [1,0,1,0,1,0,1,0,1,0,1,0,1,0,1,0,1,0,1,0,1,0,1,0,1,0,1,1,1,1,1,1,1,0,1,1,1,1,1,1,1,1,1,1,1,1,1,0,1,1,0,1,1,0,1,1,0,1,0,0,1,1,0,1,1,0,0,1,0,0,1,1,0,1,0]\\ & [0,1,3,4,6,7,9,10,12,0,2,3,5,6,8,9,11,12,1,2,4,5,7,8,10,11,0,5,13,6,14,7,15,8,16,17,1,6,11,3,9,2,7,12,4,10,13,14,5,15,11,17,12,6,15,2,10,1,9,14,8,2,13,3,17,10,\\&14,15,12,13,4,11,14,17,9,13,7,1] \\\hline 
          19 & [0,0,0,0,0,0,0,0,0,0,0,0,0,0,0,0,0,0,0,0,0,0,0,0,0,0,0,0,0,0,0,0,0,0,0,0,0,0,0,0,0,0,0,0,0,0,0,1,1,1,1,1,1,0,1,1,0,1,1,0,1,1,0,1,1,1,1,1,1,1,1,0,0,1,1,0,0,1,1,1,0,0,1,1] \\ & [0,3,7,15,1,5,13,16,3,11,14,1,9,12,16,7,10,14,5,8,12,3,6,10,1,4,8,16,2,6,14,0,4,12,15,2,10,13,0,8,11,15,6,9,13,4,7,11,2,17,5,9,10,12,13,18,14,15,17,0,16,18,1,17,14, 12,18,10,8,6,7,9,3,4,18,6,5,2,3,1,0,9,18,8,7,5,0]\\\hline
          22 &[1,0,1,0,1,0,1,0,1,0,1,0,1,0,1,0,1,0,1,0,1,0,1,0,1,0,1,0,1,0,1,0,0,0,1,0,1,1,1,1,1,1,1,1,1,1,1,1,1,1,1,1,1,0,0,0,1,1,1,1,1,1,0,1,1,1,1,1,1,0,1,1,0,1,1,1,0,1,1,1,0,0,1,1,1,1, 1,1,0,0,0,0,0,0,0,0,1,0,0,0,0,1,0,0,1,1,1,1,1,1,0,0,0,1] \\ & [0,3,8,11,16,2,7,10,15,1,6,9,14,0,5,8,13,16,4,7,12,15,3,6,11,14,2,5,10,13,1,4,9,12,0,17,18,7,14,4,11,1,8,15,5,12,2,9,16,6,13,3,10,19,20,21,6,2,19,4,5,7,8,10,17,9,11, 12,14,20,0,21,1,2,4,3,17,15,20,11,13,0,16,19,1,3,20,12,21,14,17,1,5,20,9,13,21,17,4,8,20,17,7,11,19,21,15,14,13,12,8,9,7,3,21,5,19]\\\hline 26        &[0,0,0,0,0,0,0,0,0,0,0,0,0,0,0,0,0,0,0,0,0,0,0,0,0,0,0,0,0,0,0,0,0,0,0,0,0,0,0,0,0,0,0,0,0,0,0,0,0,0,0,0,0,0,0,0,0,0,0,0,0,0,0,1,0,1,1,1,1,1,1,1,1,1,1,1,1,1,1,1,1,1,1,1,1,1, 0,1,1,1,1,1,1,1,1,1,1,1,1,1,1,1,1,1,1,1,1,1,0,1,1,1,1,1,1,0,1,0,0,0,0,0,1,1,1,0,0,0,0,1,0,0,1,1,0,0,0,1,1,0,0,0,1,1,1,1,0,0,0,0,0,0,1,0,0,1,0,0,0,0,1]\\&[0,4,11,19,2,9,17,0,7,15,19,5,13,17,3,11,15,1,9,13,20,7,11,18,5,9,16,3,7,14,1,5,12,20,3,10,18,1,8,16,20,6,14,18,4,12,16,2,10,14,0,8,12,19,6,10,17,4,8,15,2,6,13,0,21, 22,5,10,15,20,4,9,14,19,3,8,13,18,2,7,12,17,1,6,11,16,23,24,25,1,2,3,0,16,13,4,5,6,7,8,9,10,11,12,14,15,17,18,19,20,21,1,0,2,23,17,25,18,0,19,23,1,3,4,6,21,5,25,2, 4,21,7,23,5,8,6,9,25,7,10,8,11,21,9,12,15,25,23,10,12,13,25,14,16,17,21,19,16,18,15,23,13,11,14]\\\hline
          30 & [0,0,0,0,0,0,0,0,0,0,0,0,0,0,0,0,0,0,0,0,0,0,0,0,0,0,0,0,0,0,0,0,0,0,0,0,0,0,0,0,0,0,0,0,0,0,0,0,0,0,0,0,0,0,0,0,0,0,0,0,0,0,0,0,0,0,0,0,0,0,0,0,0,0,0,0,0,0,0,0,0,0,0,0,0,0, 0,0,0,0,0,0,0,0,0,0,0,0,0,0,1,0,1,1,1,1,1,1,1,1,1,1,1,1,1,1,1,1,1,1,1,1,1,1,1,1,1,0,0,1,1,1,1,1,1,1,1,1,1,1,1,1,1,1,1,1,1,1,1,1,1,1,1,1,0,1,1,1,1,1,1,1,1,1,1,1,1,1,1,1,1,1,1, 1,0,0,0,0,0,0,0,0,0,0,0,0,0,1,0,1,0,0,1,0,0,0,0,0,0,0,0,0,0,1,1,1,0,1,0,0,0,0,0,0,0,0]\\&[0,3,10,15,21,24,6,11,17,20,2,7,13,16,23,3,9,12,19,24,5,8,15,20,1,4,11,16,22,0,7,12,18,21,3,8,14,17,24,4,10,13,20,0,6,9,16,21,2,5,12,17,23,1,8,13,19,22,4,9,15,18,0, 5,11,14,21,1,7,10,17,22,3,6,13,18,24,2,9,14,20,23,5,10,16,19,1,6,12,15,22,2,8,11,18,23,4,7,14,19,0,25,26,8,16,24,7,15,23,6,14,22,5,13,21,4,12,20,3,11,19,2,10,18,1, 9,17,27,28,29,2,4,5,1,0,17,13,9,7,3,19,15,11,10,6,8,12,14,16,18,20,21,22,23,24,25,27,1,2,3,4,6,5,7,8,9,10,12,11,13,14,15,16,17,18,22,20,27,19,21,17,28,15,13,27,12, 16,28,20,24,22,1,3,28,5,27,9,11,28,7,6,27,2,23,0,21,25,28,19,18,14,27,10,8,28,4,0,27,24]\\
          \hline
    \end{tabular}
    \caption{Optimal simplicial complexes for small $n$}
    \label{tab:small complexes}
\end{sidewaystable}

It turns out that we can match the trivial upper bound for all $n$ except for $n=6$. Even though one can check this by hand, there is a short argument for why $H_s(6,2)$ cannot be $6$:

\begin{lemma}
    $H_s(6,2)<6$.
\end{lemma}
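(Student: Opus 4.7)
The plan is to argue by contradiction. Suppose $H_s(6,2) = 6$; by Observation~\ref{obs:sequences} and the upper bound discussion, there is a good sequence of $t+1 = 7$ triangles $T_1, \dots, T_7 \subseteq \binom{[6]}{3}$ covering exactly $2\cdot 6 + 3 = 15 = \binom{6}{2}$ edges, so every edge of $K_6$ lies in some $T_i$. Since non-consecutive triangles of a good sequence share at most one vertex, no edge can lie in three or more of the $T_i$; with $7 \cdot 3 = 21$ edge-uses distributed among $15$ distinct edges, exactly six edges are doubly covered, and these must be precisely the consecutive shared edges $e_i := T_i \cap T_{i+1}$ for $i = 1,\dots,6$. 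The same non-consecutive-sharing property shows the $e_i$ are pairwise distinct, since otherwise some edge would lie in at least three of the $T_i$.

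Next, I would double-count at each vertex $v \in [6]$. Letting $d_v$ be the number of $T_i$ containing $v$ and $t_v$ the number of $e_i$ incident to $v$, each of the five $K_6$-edges at $v$ is used once or twice (twice precisely when it is some $e_i$), so $2 d_v = 5 + t_v$, and in particular $t_v$ is odd. On the other hand, if the positions $\{i : v \in T_i\}$ decompose into $r_v \ge 1$ maximal runs of consecutive integers, then $t_v$ counts their ``internal'' consecutive pairs, giving $t_v = d_v - r_v$. Combining the two identities yields $d_v = 5 - r_v \le 4$, hence $t_v \le 3$. Since $\sum_v t_v = 12$ (each $e_i$ has two endpoints) and each $t_v \in \{1, 3\}$, the only consistent profile has three \emph{light} vertices with $(d_v, r_v) = (3, 2)$ and three \emph{heavy} vertices $h_1, h_2, h_3$ with $(d_v, r_v) = (4, 1)$.

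The main obstacle is then to rule out every arrangement of the three heavy vertices. Each $h_j$ occupies a single block $B_j = [a_j, a_j + 3]$ of four consecutive positions with $a_j \in \{1, 2, 3, 4\}$. If $|B_1 \cap B_2 \cap B_3| \ge 2$, then two consecutive triangles both equal $\{h_1, h_2, h_3\}$, contradicting $T_i \neq T_{i+1}$; this forces $\max_j a_j - \min_j a_j = 3$, so after relabelling $a_1 = 1$, $a_3 = 4$, with $a_2 \in \{1, 2, 3, 4\}$. A short case check in $a_2$ then finishes the proof: in every case two neighbouring $e_i$ coincide, contradicting the distinctness of the shared edges established at the outset. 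For instance, when $a_2 = 2$ the blocks give $\{h_1, h_2\} \subseteq T_2 \cap T_3 \cap T_4$ while $h_3$ belongs only to $T_4$, forcing $e_2 = e_3 = \{h_1, h_2\}$; the case $a_2 = 3$ is symmetric on the other end, and the boundary cases $a_2 \in \{1, 4\}$ in fact produce \emph{three} consecutive $e_i$ equal to a common edge of the heavy triangle. This contradiction completes the argument.
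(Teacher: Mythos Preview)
Your proof is correct and takes a genuinely different route from the paper's. The paper argues geometrically from the endpoints of the triangle strip: it singles out the vertex $a$ in $T_1 \setminus T_2$ (incident to exactly two edges at that position) and the vertex $b$ in $T_1 \cap T_2$ (incident to three edges there), and likewise $c,d$ at the other end. Since every label has total degree five, the label at $b$ must recur at a position with exactly two incident edges, and the only such positions are those of $a$ and $d$; this forces $b$ and $d$ to carry the same label, and symmetrically $a,c$ carry the same label, whence the edges $ab$ and $cd$ coincide --- a contradiction.

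Your argument is a purely combinatorial double count. The identity $2d_v = 5 + t_v$ combined with $t_v = d_v - r_v$ neatly classifies the six labels into three ``heavy'' ones (each occupying a single block of four consecutive triangles) and three ``light'' ones, after which the short case analysis on the block offsets $a_j$ finishes cleanly. Your approach is a bit longer but entirely self-contained and layout-independent: it never refers to the geometric picture or to which physical positions in the strip can have degree two. The paper's argument is quicker but leans on the visualization and on an implicit check that only the two extreme positions have exactly two incident edges. In spirit the two proofs are cousins --- both ultimately exploit the parity constraint that each label has odd degree five --- but yours makes this constraint do all the work explicitly.
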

\begin{proof}
Suppose, $H_s(6,2)=6=\frac{1}{2}\binom{6}{2}-\frac{3}{2}$. Then, there is a simplicial $2$-complex on $\lbrace 0,1,2,3,4,5\rbrace$ such that each edge appears exactly once. In particular, every vertex appears in exactly 5 edges. 

\begin{figure}[H]
    \centering
    \begin{tikzpicture}[scale=1]
        \begin{scope}[shift={(0,0)}]
        \foreach \x in {0,1,6,5} {
        \begin{scope}[shift={(\x,0)}]
        \draw (0,0) -- (0:1);
        \draw[] (0:1) -- (60:1) -- (0,0);
        \end{scope}
        }
        \draw (0.5,0.86603) -- (1.5,0.86603);
        \draw (5.5,0.86603) -- (6.5,0.86603);
        \textat{0,-.2}{$a$}
        \textat{7,-.2}{$d$}
        \textat{0.5,1.06603}{$b$}
        \textat{6.5,1.06603}{$c$}
        \end{scope}
\end{tikzpicture}
    \caption{The first and last three triangles of a simplicial 2-complex always look like this}
    \label{fig:n=6 is bad}
\end{figure}
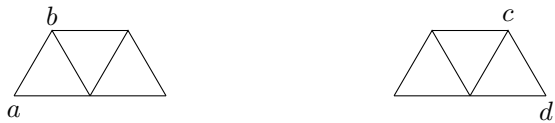

Let $a$, $b$, $c$, and $d$ as shown in Figure \ref{fig:n=6 is bad}. Since $b$ has three incident edges, the other time the label of $b$ appears in the complex, it must be incident to exactly two edges. But only $a$ and $d$ are incident to exactly two edges. As $a$ and $b$ cannot have the same label, the labels of $b$ and $d$ are the same. Analogously, the labels of $a$ and $c$ are the same. But now, the edges $ab$ and $cd$ have the same labels as endpoints, which gives a contradiction.
\end{proof}

\section{Packing of Squares of Hamilton Cycles in $K_n$ for $n=105$.}

Similarly to the generating sequences one can build squares of Hamilton cycles out of short sequences: Given a sequence $(a_0,\dots,a_\ell)$ we want to build $\ell$ squares of Hamilton cycles by starting one cycle at each of $0,1,\dots,\ell$ and always increasing the label periodically by $a_0,\dots, a_\ell$ modulo $n$.
For instance, the sequence $(19,10,4)$ creates the cycle

(0, 19, 29, 33, 52, 62, 66, 85, 95, 99, 13, 23, 27, 46, 56, 60, 79, 89, 93, 7, 17, 21, 40, 50, 54, 73, 83, 87, 1, 11, 15, 34, 44, 48, 67, 77, 81, 100, 5, 9, 28, 38, 42, 61, 71, 75, 94, 104, 3, 22, 32, 36, 55, 65, 69, 88, 98, 102, 16, 26, 30, 49, 59, 63, 82, 92, 96, 10, 20, 24, 43, 53, 57, 76, 86, 90, 4, 14, 18, 37, 47, 51, 70, 80, 84, 103, 8, 12, 31, 41, 45, 64, 74, 78, 97, 2, 6, 25, 35, 39, 58, 68, 72, 91, 101).

By starting another cycle with label $1$ instead of $0$, we get the cycle 

(1, 20, 30, 34, 53, 63, 67, 86, 96, 100, 14, 24, 28, 47, 57, 61, 80, 90, 94, 8, 18, 22, 41, 51, 55, 74, 84, 88, 2, 12, 16, 35, 45, 49, 68, 78, 82, 101, 6, 10, 29, 39, 43, 62, 72, 76, 95, 0, 4, 23, 33, 37, 56, 66, 70, 89, 99, 103, 17, 27, 31, 50, 60, 64, 83, 93, 97, 11, 21, 25, 44, 54, 58, 77, 87, 91, 5, 15, 19, 38, 48, 52, 71, 81, 85, 104, 9, 13, 32, 42, 46, 65, 75, 79, 98, 3, 7, 26, 36, 40, 59, 69, 73, 92, 102).

And by starting with label $2$, we get

(2, 21, 31, 35, 54, 64, 68, 87, 97, 101, 15, 25, 29, 48, 58, 62, 81, 91, 95, 9, 19, 23, 42, 52, 56, 75, 85, 89, 3, 13, 17, 36, 46, 50, 69, 79, 83, 102, 7, 11, 30, 40, 44, 63, 73, 77, 96, 1, 5, 24, 34, 38, 57, 67, 71, 90, 100, 104, 18, 28, 32, 51, 61, 65, 84, 94, 98, 12, 22, 26, 45, 55, 59, 78, 88, 92, 6, 16, 20, 39, 49, 53, 72, 82, 86, 0, 10, 14, 33, 43, 47, 66, 76, 80, 99, 4, 8, 27, 37, 41, 60, 70, 74, 93, 103).

These three cycles cover all edges with residue in $(19\upper{29}10\upper{14}4\upper{23})$.

Be aware that not all sequences work in that way. However, the following sequences define 26 squares of Hamilton cycles where every edge is covered exactly once:

\begin{align*}
    &(19\upper{29}10\upper{14} 4\upper{23})\\
&(-40\upper{22} -43\upper{-38} 5\upper{-35})\\
&(-41\upper{-13} 28\upper{-52} 25\upper{-16})\\
&(-48\upper{-42} 6\upper{27} 21\upper{3} -18\upper{-44} -26\upper{31})\\
&(-17\upper{-9} 8\upper{-46} 51\upper{-7} 47\upper{-2} -49\upper{39})\\
&(-30\upper{-50} -20\upper{-32} -12\upper{24} 36\upper{37} 1\upper{-33} -34\upper{11} 45\upper{15}) \, . 
\end{align*}
\label{sec:appendix_packing}

\end{document}